\newtheorem{theorem}{Theorem}[section]
\newtheorem{lemma}[theorem]{Lemma}
\newtheorem{corollary}[theorem]{Corollary}
\newtheorem{claim}[theorem]{Claim}
\newtheorem{definition}[theorem]{Definition}
\DeclareMathOperator{\spn}{span}
\DeclareMathOperator{\tr}{tr}
\DeclareMathOperator{\diag}{diag}
\DeclareMathOperator{\rank}{rank}
\DeclareMathOperator{\Rea}{Re}
\DeclareMathOperator{\Ima}{Im}
\DeclareMathOperator{\Proj}{Proj}
\renewcommand{\vec}[1]{\boldsymbol{#1}}
\newcommand{\ip}[2]{\langle #1,#2 \rangle}
\newcommand{\R}{\mathbb{R}}
\newcommand{\C}{\mathbb{C}}
\newcommand{\Z}{\mathbb{Z}}
\newcommand{\F}{\mathbb{F}}
\title{\bf  Sylvester-Gallai for Arrangements of Subspaces}
\author{Zeev Dvir\thanks{Department of Computer Science and Department of Mathematics, Princeton University, Princeton NJ.
Email: \texttt{zeev.dvir@gmail.com}.}
\and 
Guangda Hu\thanks{Department of Computer Science  Princeton University, Princeton NJ.
Email: \texttt{guangdah@cs.princeton.edu}.}
}
\date{}
\begin{document}
\maketitle

\begin{abstract}
In this work we study arrangements of $k$-dimensional subspaces $V_1,\ldots,V_n \subset \C^\ell$. Our main result shows that, if every pair $V_{a},V_b$ of subspaces is contained in a dependent triple (a triple $V_{a},V_b,V_c$ contained in a  $2k$-dimensional space), then the entire arrangement must be contained in a subspace whose dimension depends only on $k$ (and not on $n$). The theorem holds under the assumption that $V_a \cap V_b = \{0\}$ for every pair (otherwise it is false). This generalizes the Sylvester-Gallai  theorem (or Kelly's theorem for complex numbers), which proves the $k=1$ case. Our proof also handles arrangements in which we  have many pairs (instead of all) appearing in dependent triples, generalizing the quantitative results of Barak et. al. \cite{BDWY-pnas}.

One of the main ingredients in the proof is a strengthening of a Theorem of Barthe \cite{Bar98} (from the $k=1$ to $k>1$ case)  proving the existence of a linear map that makes the  angles between pairs of subspaces large on average. Such a mapping can be found, unless there is an obstruction in the form of a low dimensional subspace intersecting many of the spaces in the arrangement (in which case one can use a different argument to prove the main theorem).
\end{abstract}

\section{Introduction}

The Sylvester-Gallai (SG) theorem states that for $n$  points $\vec{v}_1,\vec{v}_2,\ldots,\vec{v}_n \in \R^\ell$, if for every pair $\vec{v}_i,\vec{v}_j$ there is a third point $\vec{v}_k$ on the line passing through $\vec{v}_i,\vec{v}_j$, then all points must lie on a single line. This was first posed by Sylvester~\cite{Syl93}, and was solved by Melchior~\cite{Mel40}. It was also conjectured independently by Erd{\"o}s \cite{Erd43} and proved shortly after by Gallai. We refer the reader to the survey~\cite{BM90} for more information about the history and various generalizations of this theorem. The complex version of this theorem was proved by Kelly~\cite{Kel86} (see also \cite{EPS06, DSW12} for alternative proofs) and states that if $\vec{v}_1,\vec{v}_2,\ldots,\vec{v}_n \in \C^\ell$  and for every pair $\vec{v}_i,\vec{v}_j$ there is a third $\vec{v}_k$ on the same complex line, then all points are contained in some complex plane (over the complex numbers, there are planar examples and so this theorem is tight).

In~\cite{DSW12} (based on earlier work in \cite{BDWY-pnas}), the following quantitative variant of the SG theorem was proved. For a set $S \subset \C^\ell$ we denote by $\dim(S)$  the smallest $d$ such that $S$ is contained in a $d$-dimensional subspace of $\C^\ell$.

\begin{theorem}[\cite{DSW12}] \label{thm:osg}
Given $n$ points $\vec{v}_1,\vec{v}_2,\ldots,\vec{v}_n \in \C^\ell$, if for every $i\in[n]$ there exists at least $\delta n$ values of $j\in[n]\setminus\{i\}$ such that the line through $\vec{v}_i$ and $\vec{v}_j$ contains a third point $\vec{v}_k$, then $\dim\{\vec{v}_1,\vec{v}_2,\ldots,\vec{v}_n\}\leq 10/\delta$.
\end{theorem}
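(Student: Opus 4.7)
My plan is to adapt the design-matrix / matrix-scaling strategy that is by now standard for quantitative Sylvester--Gallai statements. The idea is to build a combinatorial matrix $M$ that records the collinear triples, observe that the points span a subspace of dimension at most $n - \rank(M)$, and then lower-bound $\rank(M)$ using matrix scaling (Barthe's theorem for the $k=1$ case) together with a Cauchy--Schwarz / trace estimate.

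Concretely, for each ordered pair $(i,j)$ such that some $\vec{v}_{k}$ with $k \notin \{i,j\}$ lies on the complex line through $\vec{v}_i$ and $\vec{v}_j$, I pick one such $k = k(i,j)$ together with the triple $(\alpha,\beta,\gamma)$ of nonzero scalars (unique up to common rescaling) satisfying $\alpha \vec{v}_i + \beta \vec{v}_j + \gamma \vec{v}_k = 0$. Assembling these triples as the rows of an $m\times n$ matrix $M$ gives a matrix in which every row has exactly three nonzero entries and, by the hypothesis, every column has at least $\delta n$ nonzero entries. Letting $V$ be the $n \times \ell$ matrix whose $i$-th row is $\vec{v}_i^{\top}$, the construction yields $MV = 0$, hence
\[
\dim\{\vec{v}_1,\ldots,\vec{v}_n\} \;=\; \rank(V) \;\leq\; n - \rank(M).
\]

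It therefore suffices to prove the rank lower bound $\rank(M) \geq n - 10/\delta$. To do this I would apply Barthe's theorem to produce positive diagonal matrices $L$ and $R$ so that, writing $\widetilde{M} = LMR$, the columns of $\widetilde{M}$ all have the same $\ell_2$-norm while the squared row norms are bounded by an absolute constant. The nondegeneracy hypothesis required by Barthe is satisfied here because the three nonzero entries in each row come from a nontrivial linear dependence among three pairwise linearly independent vectors, so none of them vanishes. Once this scaling is in hand, an upper bound on $\tr(\widetilde{M}^{*}\widetilde{M})$ combined with a Cauchy--Schwarz estimate on the Frobenius norm of the projection onto the row space of $\widetilde M$ recovers the desired rank lower bound, and a careful accounting produces the stated constant $10$.

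The combinatorial setup and the reduction to a rank bound are essentially routine. The main obstacle is the quantitative rank lower bound: passing from the purely combinatorial ``few nonzeros per row, many per column'' structure to a dimension bound that is \emph{independent of $n$} is precisely where matrix scaling enters in a nontrivial way. This is also the step that, as the abstract hints, is hardest to generalize to the higher-dimensional $k>1$ setting that is the main subject of the paper.
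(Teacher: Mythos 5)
Your overall strategy is the right one --- this theorem is quoted from \cite{DSW12}, and the proof there is exactly the design-matrix-plus-matrix-scaling argument you describe (indeed, the present paper's Definition~\ref{def:adsystem} and Theorem~\ref{thm:sep} are the $k>1$ analogues of that machinery). However, there is a genuine gap in your construction of $M$: you have dropped the third, and most delicate, design condition, namely that the supports of every two \emph{columns} intersect in at most a constant number of rows. Without it the rank bound fails completely: a matrix whose rows are $\delta n$ copies each of the indicators of $n/3$ disjoint triples has $3$ nonzeros per row and $\delta n$ per column, yet rank only $n/3$, so ``few per row, many per column'' alone can never give $\rank(M)\geq n-O(1/\delta)$. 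In the scaling argument this condition is what lets you pass from the bound on each row's norm to a bound on the total off-diagonal mass contributed to a fixed pair of columns (compare condition~4 of Definition~\ref{def:adsystem} and the step in the proof of Theorem~\ref{thm:sep} where ``for every $t\neq s$, there are at most $\alpha$ nonzero values in $\{c_{jt}\}_{j\in J}$'').

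Moreover, your specific recipe --- one row for each ordered pair $(i,j)$, with an arbitrary choice of third point $k(i,j)$ --- does not satisfy this condition. If a single line contains $r$ points, then for a fixed pair $\{i,j\}$ on that line every ordered pair $(i,b)$ with $b$ on the line may have $k(i,b)=j$, so the two columns $i$ and $j$ can share on the order of $r$ rows, and $r$ can be as large as $n$. The standard fix, which is what \cite{DSW12} (following \cite{BDWY-pnas}) does and what this paper reuses as Claim~\ref{clm:triple}, is to group the collinear triples line by line: for a line with $r\geq 3$ points one selects $r^2-r$ triples so that each point appears in exactly $3(r-1)$ of them and each pair appears in at most $6$. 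This yields a $(3,\,\Omega(\delta n),\,6)$-design matrix, to which the scaling rank bound applies. With that repaired, the rest of your outline (the identity $MV=0$, the reduction to a rank lower bound, Barthe-type scaling, and the trace/Cauchy--Schwarz estimate as in Lemma~\ref{lem:diagdom}) is sound; only note that obtaining the specific constant $10$ requires the sharpened rank bound of \cite{DSW12} rather than the original \cite{BDWY-pnas} estimate.
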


(The dependence on $\delta$ is asymptotically tight). From here on, we will work with homogeneous subspaces (passing through zero) instead of affine subspaces (lines/planes etc). The difference is not crucial to our results and the affine version can always be derived by intersecting with a generic hyperplane. In this setting, the above theorem will be stated for a set of one-dimensional subspaces, each spanned by some  $\vec{v}_i$ (and no two $\vec{v}_i$'s being a multiple of each other) and collinearity of $\vec{v}_i,\vec{v}_j,\vec{v}_k$  is replaced with the three vectors being linearly dependent (i.e., contained in a 2-dimensional subspace).

One natural high dimensional variant of the SG theorem, studied in \cite{Han65, BDWY-pnas}, replaces 3-wise dependencies with $t$-wise dependencies (e.g, every triple is in some coplanar four-tuple). In this work, we raise another natural  high-dimensional variant in which the {\em points} themselves are replaced with $k$-dimensional subspaces. We consider such arrangements with many 3-wise dependencies (defined appropriately) and attempt to prove that the entire arrangement lies in some low dimensional space. We will consider arrangements $V_1,\ldots,V_n \subset \C^\ell$ in which each $V_i$ is $k$-dimensional and with each pair satisfying  $V_{i_1} \cap V_{i_2} = \{\vec{0}\}$. A dependency can then be defined as a triple $V_{i_1},V_{i_2},V_{i_3}$ of $k$-dimensional subspaces that are contained in a single $2k$-dimensional subspace. The pair-wise zero intersections guarantee that every pair of subspaces defines a unique $2k$-dimensional space (their span) and so, this definition of dependency behaves in a similar way to collinearity. For example, we have that if $V_{i_1},V_{i_2},V_{i_3}$ are dependent and $V_{i_2},V_{i_3},V_{i_4}$ are dependent then also $V_{i_1},V_{i_2},V_{i_4}$ are dependent. This would not hold if  we allowed some pairs to have non zero intersections. In fact, if we allow non-zero intersection then we can construct an arrangement of two dimensional spaces with many dependent triples and with dimension as large as $\sqrt{n}$ (see below). We now state our main theorem, generalizing Theorem~\ref{thm:osg} (with slightly worse parameters) to the case $k>1$. We use the standard $V + U$ notation to denote the subspace spanned by all vectors in $V \cup U$. We use big `O' notation to hide absolute constants.

\begin{theorem} \label{thm:sg}
Let $V_1,V_2,\ldots,V_n\subset \C^\ell$ be $k$-dimensional subspaces such that $V_{i}\cap V_{i'}=\{\vec{0}\}$ for all $i\neq i'\in[n]$. Suppose that, for every $i_1\in[n]$ there exists at least $\delta n$ values of $i_2\in[n]\setminus\{i_1\}$ such that $V_{i_1}+V_{i_2}$ contains some $V_{i_3}$ with $i_3 \not\in \{i_1,i_2\}$. Then $$\dim(V_1+V_2+\cdots+V_n)= O(k^4/\delta^2).$$
\end{theorem}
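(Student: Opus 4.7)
\medskip
\noindent\textbf{Proof proposal.}
The plan is to mimic the design-matrix approach used in \cite{DSW12} for the $k=1$ case, but now with \emph{block} design matrices whose column blocks have width $k$ (one block per subspace $V_i$). First I would fix a basis for each $V_i$ and stack these bases as the rows of an $nk \times \dim(W)$ matrix $A$, where $W = V_1+\cdots+V_n$. For every dependent triple $(V_{i_1},V_{i_2},V_{i_3})$ given by the hypothesis, each basis vector of $V_{i_3}$ can (using the pairwise trivial intersection assumption) be written \emph{uniquely} as $u_1+u_2$ with $u_1 \in V_{i_1}$ and $u_2 \in V_{i_2}$. Each such expression is one $k$-block linear relation on the rows of $A$, and collecting all of them over all triples yields a block design matrix $M$ satisfying $MA=0$. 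Hence $\dim(W) = \rank(A) \le nk - \rank(M)$, so proving the theorem reduces to a lower bound of the form $\rank(M) \ge nk - O(k^4/\delta^2)$.

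The hypothesis supplies $\delta n$ dependent triples per index $i_1$, which is exactly the combinatorial input needed to force $M$ to look like a block-generalization of a $\delta$-design matrix. The main step is to prove a rank lower bound for such block design matrices, analogous to the scalar bound of \cite{BDWY-pnas, DSW12}. The scalar proof proceeds by matrix scaling (Sinkhorn-type balancing) together with a spectral argument (the trace method on $M^*M$). In the block setting, matrix scaling must be replaced by the generalization of Barthe's theorem advertised in the abstract: I would look for an invertible linear map $T$ on $W$ so that, after replacing each $V_i$ by $T(V_i)$, the subspaces become sufficiently ``balanced'' — concretely, that the projectors onto the $T(V_i)$'s have average close to a multiple of the identity, and that on average pairs $T(V_i),T(V_j)$ meet at large principal angles. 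Once such a $T$ exists, the $k$-block analogue of the trace/spectral argument on $M^*M$ should yield the desired rank bound, losing only polynomial factors in $k$ (which is the source of the $k^4$).

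The hard part, and the step I would attack most carefully, is the existence of the balancing map $T$. Barthe's original theorem handles vectors ($k=1$); extending it to $k$-dimensional subspaces requires a new variational / convex-analytic argument, and it only produces $T$ when no ``obstruction'' is present. An obstruction here takes the shape of a low-dimensional subspace $U \subseteq W$ that meets too many of the $V_i$'s non-trivially; the non-existence of $T$ would correspond to such a $U$ by a Hahn-Banach / KKT style duality on the balancing optimization. I would therefore split into two cases. \emph{Case 1:} a good $T$ exists, and the block design matrix argument above gives $\dim(W) = O(k^4/\delta^2)$ directly. \emph{Case 2:} there is an obstructing subspace $U$ of dimension $O(k)$ meeting many $V_i$'s in a nonzero subspace; here I would pass to the quotient $W/U$ (which reduces the dimension $k$ of the subspaces meeting $U$), apply induction on $k$ to the projected arrangement, and combine with $\dim(U)$ to recover the overall bound.

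Routine pieces I will not belabor: verifying that the hypothesis ``every $V_{i_1}$ has $\delta n$ good partners'' translates cleanly into the $\delta$-design property of $M$ (even after passing to quotients); checking that $T(V_i)$ inherits pairwise trivial intersections; and tracking the $k$-powers through the spectral bound on $\rank(M)$. The genuinely new ingredients are the block-design rank bound and the subspace generalization of Barthe, with the obstruction case handled by a dimension-drop / induction on $k$.
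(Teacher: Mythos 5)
Your high-level outline — fix bases, build a block "design matrix" $M$ with $MA=0$, lower-bound $\rank(M)$ via a spectral/trace argument after applying a Barthe-type balancing map, and handle the obstruction to balancing by projecting away a bad subspace and iterating — is the strategy the paper actually follows. The pieces you call "routine" (translating the hypothesis into the $\delta$-design property, tracking $k$-powers, using the balancing to get diagonal dominance of $M$) do work out essentially as you expect, via a small-coefficient lemma for well-separated subspaces and a diagonal-dominance rank bound. You also omit a necessary but easy preliminary: the proof is carried out over $\R$, and one must first pass from $k$-dimensional complex subspaces to $2k$-dimensional real ones by taking real and imaginary parts.

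The genuine gap is in your Case 2. You assert the obstruction $U$ has dimension $O(k)$ and propose to induct on $k$ after quotienting by $U$. Neither claim is right, and this is where a fleshed-out version of your argument would break. The obstruction that actually arises is a subspace $U$ of dimension up to $\beta d$ (with $\beta \approx \delta/(\alpha k)$, so $\dim U$ can be $\Omega(d/k)$, far larger than $O(k)$) that contains one nonzero vector from each of at least $\delta n/(20\alpha)$ of the $V_i$'s. Quotienting by $U$ does not reduce the ambient $k$: each affected $V_i$ drops in dimension by at most $1$, and the unaffected $V_i$'s keep their dimension. So there is no induction on $k$. What actually controls the recursion is the \emph{total} dimension budget $\sum_i \dim V_i \le kn$: each iteration kills at least $\delta n/(20\alpha)$ of it, so there are at most $20\alpha k/\delta$ iterations, each shrinking $d$ by a factor of at most $(1-\beta)$. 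Choosing $\beta$ so that $(1-\beta)^{-20\alpha k/\delta}$ stays $O(1)$, together with the base-case bound $d=O(\alpha k^3/(\beta\delta))$ from Case 1, is what produces the final $O(k^4/\delta^2)$. Your version — dimension-$O(k)$ obstruction plus induction on $k$ — would need a much stronger structural claim about the failure of Barthe's theorem than is provable, and it fails to account for the fact that the arrangement's per-space dimension is not what decreases.
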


The condition $V_i \cap V_{i'} = \{\vec{0}\}$ is needed due to the following example. Set $k=2$ and $n=\ell(\ell-1)/2$ and let $\{\vec{e}_1,\vec{e}_2,\ldots,\vec{e}_\ell\}$ be the standard basis of $\R^\ell$. Define the $n$ spaces to be  $V_{ij} = \spn\{\vec{e}_i,\vec{e}_j\}$  with $1\leq i<j\leq\ell$. Now, for each $(i,j) \neq (i',j')$ the sum $V_{ij} + V_{i'j'}$ will contain a third space (since the size of $\{i,j,i',j'\}$ is at least three). However, this arrangement has dimension $\ell > \sqrt{n}$.

The bound $O(k^4/\delta^2)$ is probably not tight and we conjecture that it could be improved to $O(k/\delta)$, possibly with a modification of our proof. One can always construct an arrangement with dimension $2k/\delta$  by partitioning the subspaces into $1/\delta$ groups, each contained in a single $2k$ dimensional space. 

\subparagraph{Overview of the proof:}
A preliminary observation is that it suffices to prove the theorem over $\R$. This is because an arrangement of $k$-dimensional Complex subspaces can be translated into an arrangement of $2k$-dimensional Real subspaces (this is proved at the end of Section~\ref{sec:adsystem}). Hence, we will now focus on Real arrangements.

The proof of the theorem is considerably simpler when the arrangement of subspaces $V_1,\ldots,V_n$ satisfies an extra `robustness' condition, namely that every two spaces have an angle bounded away from zero. More formally, if for every two unit vectors $\vec{v}_1 \in V_{i_1}$ and $\vec{v}_2 \in V_{i_2}$ we have $|\ip{\vec{v}_1}{\vec{v}_2}| \leq1-\tau$ for some absolute constant $\tau>0$. This condition implies that, when we have a dependency of the form $V_{i_3} \subset V_{i_1} + V_{i_2}$, every unit vector in $V_{i_3}$ can be obtained as a linear combination {\em with bounded coefficients} (in absolute value) of unit vectors from $V_{i_1},V_{i_2}$. Fixing an orthogonal basis for each subspace and using the conditions of the theorem, we are able to construct many local linear dependencies between the basis elements. We then show (using the bound on the coefficients in the linear combinations) that the space of linear dependencies between all basis vectors, considered as a subspace of $\R^{kn}$, contains the rows of an $nk \times nk$ matrix that has large entries on the diagonal and small entries off the diagonal. Since matrices of this form have high rank (by a simple spectral argument), we conclude that the original set of basis vectors must have small dimension.

To handle the general case, we show that, unless some low dimensional subspace $W$ intersects many of the spaces $V_i$ in the arrangement, we can find a change of basis that makes the angles between the spaces large on average (in which case, the previous argument works). This gives us the overall strategy of the proof: If  such a $W$ exists, we project $W$ to zero and continue by induction. The loss in the overall dimension is bounded by the dimension of $W$, which can be chosen to be small enough. Otherwise (if such $W$ does not exist) we apply the change of basis and use it to bound the dimension.

The change of basis is found by generalizing a theorem  of Barthe~\cite{Bar98} (see \cite{DSW14} for a more accessible treatment) from the one dimensional case  (arrangement of points) to higher dimension. We state this result here since we believe it could be of independent interest. To state the theorem we must first introduce the following, somewhat technical, definition.

\begin{definition}[admissible basis set, admissible basis vector] \label{def:adbasic}
Given a list of vector spaces $\mathcal{V}=(V_1,V_2,\ldots,V_n)$ ($V_i\subseteq\R^\ell$), a set $H\subseteq [n]$ is called a {\em $\mathcal{V}$-admissible basis set} if
 $$\dim(\sum_{i\in H}V_i)=\sum_{i\in H}\dim(V_i)=\dim(\sum_{i\in[n]}V_i),$$ i.e. if every space with index in $H$ has intersection $\{\vec{0}\}$ with the span of the other spaces with indices in $H$, and the spaces with indices in $H$ span the entire space $\sum_{i\in[n]}V_i$.
  
A {\em $\mathcal{V}$-admissible basis vector} is any indicator vector $\vec{1}_H$ of some $\mathcal{V}$-admissible basis set $H$ (where the $i$-th entry of $\vec{1}_H$ equals $1$ if $i\in H$ and $0$ otherwise). 
\end{definition}

The following theorem is proved in Section~\ref{sec:barthe}.
\begin{theorem} \label{thm:barthe}
Given a list of vector spaces $\mathcal{V}=(V_1,V_2,\ldots,V_n)$ ($V_i\subseteq\R^\ell$) with $V_1+V_2+\cdots+V_n=\R^\ell$ and a vector $\vec{p}\in\R^n$ in the convex hull of all $\mathcal{V}$-admissible basis vectors. Then for any $\varepsilon>0$, there exists an invertible linear map $M:\R^\ell\mapsto\R^\ell$ such that
$$\Big\|\sum_{i=1}^np_i\Proj_{M(V_i)}-I_{\ell\times\ell}\Big\|\leq\varepsilon,$$
where $\|\cdot\|$ is the spectral norm and $\Proj_{M(V_i)}$ is the orthogonal projection matrix onto $M(V_i)$.
\end{theorem}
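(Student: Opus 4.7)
The plan is to realize $M$ as the square root of an approximate critical point of a variational functional on the positive definite cone, generalizing the variational proof of Barthe's theorem (as in~\cite{DSW14}). For each $i$ fix a matrix $B_i \in \R^{\ell \times \dim V_i}$ whose columns form a basis of $V_i$, and for positive definite $T \in \R^{\ell \times \ell}$ define
$$f(T) \;=\; \sum_{i=1}^n p_i \log \det(B_i^T T B_i) \;-\; \log \det T.$$
A direct computation yields
$$\nabla f(T) \;=\; \sum_{i=1}^n p_i\, B_i (B_i^T T B_i)^{-1} B_i^T \;-\; T^{-1}.$$
Writing $T = M^T M$ with $M$ invertible and conjugating $\nabla f(T)$ by $M$ on both sides, the result is exactly $\sum_i p_i \Proj_{M(V_i)} - I$, using that $M B_i (B_i^T T B_i)^{-1} B_i^T M^T = \Proj_{M(V_i)}$ and $M T^{-1} M^T = I$. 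Thus any $T$ at which $\nabla f$ is small (in a suitable norm) produces, via $M = T^{1/2}$, a linear map of the desired form.

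The second step is to show that $f$ admits approximate critical points by establishing that it is bounded below and scale-invariant. Every admissible basis set $H$ satisfies $\sum_{i \in H} \dim V_i = \dim \sum_j V_j = \ell$, so the convex hull condition on $\vec p$ gives $\sum_i p_i \dim V_i = \ell$, making $f$ invariant under dilations $T \mapsto cT$. For the lower bound, the key observation is that for any admissible $H$ the concatenation $B_H = [B_{i_1} \mid \cdots \mid B_{i_h}]$ is an $\ell \times \ell$ invertible matrix, so $\det(B_H^T T B_H) = (\det B_H)^2 \det T$. Fischer's inequality for the block positive definite matrix $B_H^T T B_H$ then gives $\det(B_H^T T B_H) \leq \prod_{i \in H} \det(B_i^T T B_i)$, whence
$$\log \det T \;\leq\; \sum_{i \in H} \log \det(B_i^T T B_i) \;-\; 2 \log |\det B_H|.$$
Writing $\vec p = \sum_H \lambda_H \vec 1_H$ as a convex combination of admissible basis vectors and summing yields $f(T) \geq 2 \sum_H \lambda_H \log|\det B_H|$, a constant independent of $T$.

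With $f$ smooth, scale-invariant, and bounded below, Ekeland's variational principle (applied on the positive definite cone modulo dilations, with completeness provided by the natural Riemannian metric) furnishes $T$'s at which $\|\nabla f(T)\|$ is arbitrarily small. The main obstacle I anticipate is the final translation step: since the quantity of interest is $\|M \nabla f(T) M^T\|_{\mathrm{sp}}$, a small Euclidean gradient only suffices when the condition number of $M = T^{1/2}$ is controlled. To handle this I expect to use the dilation-invariance together with the fact that the lower bound argument forces $f$ to blow up as $T$ approaches the boundary of the positive definite cone, so the search can be restricted to $T$'s of condition number bounded by a function of $\varepsilon$ and the data.
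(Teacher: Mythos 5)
Your approach is genuinely different from the paper's, and in some respects cleaner. The paper works with a vector $\vec{t}\in\R^m$ of log-weights on individual basis vectors together with a tuple of orthogonal matrices $(R_1,\dots,R_n)$, one per space, and needs a separate maximization over the $R_i$'s (Lemma~\ref{lem:optr} and the unnamed orthogonality lemma in Section~\ref{sec:barthepf}) to control the choice of basis inside each $V_i$ before it can differentiate in $\vec t$. Your parametrization by a single positive definite $T$ dispenses with these auxiliary variables entirely, since $\log\det(B_i^T T B_i)$ is automatically unaffected by orthogonal re-basing inside each $V_i$. Your gradient computation is correct, and so is the boundedness (the paper proves it in the other direction, boundedness from above via Cauchy--Binet plus AM--GM, whereas your Fischer-inequality argument gives a lower bound and is a clean alternative). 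The scale-invariance $\sum_i p_i\dim V_i=\ell$ is also verified correctly.

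The gap you flag in the final step, however, is real, and the fix you sketch does not work. You suggest that $f$ blows up as $T$ degenerates, so the search can be confined to $T$'s of bounded condition number. This is false in general: take $\ell=2$, $V_1=V_2=\spn\{\vec e_1\}$, $V_3=\spn\{\vec e_2\}$, $\vec p=(1/2,1/2,1)$; then $f(T)=\log(T_{11}T_{22})-\log\det T$ equals $0$ identically along $T=\diag(c,1)$ as $c\to0$. Moreover, even where a minimizing sequence does escape to the boundary, restricting to a compact slice of bounded condition number yields a constrained minimizer, at which the unconstrained gradient need not be small.

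The correct repair is to push harder on the Riemannian viewpoint you mention only for completeness. Equip the positive definite cone with the affine-invariant metric $g_T(X,Y)=\tr(T^{-1}XT^{-1}Y)$, under which it is a complete Riemannian manifold. Writing $\nabla f$ for the Euclidean gradient you computed, the Riemannian gradient is $\operatorname{grad} f(T)=T(\nabla f)T$, and with $T=M^TM$ its Riemannian norm is
$$\big\|\operatorname{grad} f(T)\big\|_T^2=\tr\big((\nabla f)\,T\,(\nabla f)\,T\big)=\big\|M(\nabla f)M^T\big\|_F^2,$$
which by your own identity $M(\nabla f)M^T=\sum_{i=1}^n p_i\Proj_{M(V_i)}-I$ is exactly the Frobenius norm of the quantity you want to control. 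So small Riemannian gradient directly yields the conclusion, with no need for condition-number bounds. Since $f$ is smooth and bounded below on a complete Riemannian manifold, Ekeland's variational principle gives, for every $\varepsilon>0$, a point $T$ satisfying $f(y)>f(T)-\varepsilon\,d(y,T)$ for all $y\neq T$; taking $y$ along unit-speed geodesics from $T$ shows $\|\operatorname{grad} f(T)\|_T\le\varepsilon$, hence $\|\sum_i p_i\Proj_{M(V_i)}-I\|_{\mathrm{sp}}\le\|M(\nabla f)M^T\|_F\le\varepsilon$. With this observation inserted, your argument goes through and gives a proof genuinely different from the paper's.
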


The connection to the explanation given in the proof overview is as follows: If there is no subspace $W$ of low dimension that intersects many of the spaces $V_1,\ldots,V_n$ then, one can show that there exists a vector $\vec{p}$ in the convex hull of all $\mathcal{V}$-admissible basis vectors such that the entries of $\vec{p}$ are not too small. This is enough to show that the average angle between pairs of spaces is large since otherwise one can derive a contradiction to the inequality which says that the sum of orthogonal projections of any unit vector must be relatively small.

 The proof of the one dimensional case in \cite{Bar98} (which does not have an $\epsilon$ error) proceeds by defining a strictly convex function $f(t_1,\ldots,t_m)$ on $\R^m$ and shows that the function is bounded. This means that there must exist a point in which all partial derivatives of $f$ vanish. Solving the resulting equations gives an invertible matrix that defines the required change of basis. We follow a similar strategy, defining an appropriate bounded function $f(t_1,\ldots,t_m,R_1,\ldots,R_n)$ in more variables, where the extra variables $R_1,\ldots,R_n$ represent the action of the orthogonal group $\mathbf{O}(k)$ on each of the spaces. However, in our case, we cannot show that $f$ is strictly convex and so a maximum might not exist. However, we are still able to show that there exists a point in which all partial derivatives are very small (smaller than any $\epsilon>0$), which is sufficient for our purposes.

\subparagraph{Connection to Locally Correctable Codes.}

A $q$-query Locally Correctable Code (LCC) over a field $\F$ is a $d$-dimensional subspace  $C \subset \F^n$ that allows for `local correction' of codewords (elements of $C$) in the following sense. Let $\vec{y} \in C$ and suppose we have query access to $\vec{y}'$ such that $\vec{y}_i = \vec{y}'_i$ for at least $(1-\delta)n$ indices $i \in [n]$ (think of $\vec{y}'$ as a noisy version of $\vec{y}$). Then, for every $i$, we can probabilistically pick $q$ positions in $\vec{y}'$ and, from their (possibly incorrect values), recover the correct value of $\vec{y}_i$ with high probability (over the choice of queries). LCC's play an important role in  theoretical computer science (mostly over finite fields but recently also over the Reals, see \cite{Dvir-rigidity}) and are still poorly understood. In particular, when $q$ is constant greater than 2, there are exponential gaps between the dimension  of explicit constructions and the proven upper bounds. In \cite{BDWY11} it was observed that $q$-LCCs are essentially equivalent to configurations of points with many local dependencies\footnote{One important difference is that LCC's give rise to configurations where each point can repeat more than once.}. A variant of Theorem~\ref{thm:osg} shows for example that the maximal dimension of a $2$-LCC in $\R^n$ has dimension bounded by $(1/\delta)^{O(1)}$. Our results can be interpreted in this framework as dimension upper bounds for $2$-query LCC's in which each coordinate is replaced by a `block' of $k$ coordinate. Our results then show that, even under this relaxation, the dimension still cannot increase with $n$. The case of $3$-query LCC's over the Reals is still wide open (some modest progress was made recently in \cite{DSW14}) and we hope that the methods developed in this work could lead to further progress on this tough problem.

\subparagraph{Organization.} In Section~\ref{sec:adsystem}, we  define the notion of $(\alpha,\delta)$-systems (which generalizes the SG condition) and reduce our $k$-dimensional Sylvester-Gallai theorem to a more general theorem, Theorem~\ref{thm:main}, on the dimension of $(\alpha,\delta)$-systems (this part also includes the reduction from Complex to Real arrangements). Then, in Section~\ref{sec:barthe}, we prove the generalization of Barthe's theorem (Theorem~\ref{thm:barthe}). Finally,  in section~\ref{sec:main}, we prove the our main result regarding $(\alpha,\delta)$-systems.

\section{Reduction to \texorpdfstring{$(\alpha,\delta)$}{(alpha,delta)}-systems} \label{sec:adsystem}

The notion of an $(\alpha,\delta)$-system is used to `organize' the dependent triples in the arrangement in a more convenient form so that each space is in many triples and every pair of spaces is together only in a few dependent triples. We also allow dependent {\em pairs} as those might arise  when we apply a linear map on the arrangement.

\begin{definition}[$(\alpha,\delta)$-system] \label{def:adsystem}
Given a list of vector spaces $\mathcal{V}=(V_1,V_2,\ldots,V_n)$ ($V_i\subseteq\R^\ell$), we call a list of sets $\mathcal{S}=(S_1,S_2,\ldots,S_w)$  an {\em $(\alpha,\delta)$-system} of $\mathcal{V}$ ($\alpha\in\Z^+$, $\delta>0$) if
\begin{enumerate}
\item Every $S_j$ is a subset of $[n]$ of size either $3$ or $2$.
\item If $S_j$ contains $3$ elements $i_1$, $i_2$ and $i_3$, then $V_{i_1}\subseteq V_{i_2}+V_{i_3}$, $V_{i_2}\subseteq V_{i_1}+V_{i_3}$ and $V_{i_3}\subseteq V_{i_1}+V_{i_2}$. If $S_j$ contains $2$ elements $i_1$ and $i_2$, then $V_{i_1}=V_{i_2}$.
\item Every $i\in[n]$ is contained in at least $\delta n$ sets of $\mathcal{S}$.
\item Every pair $\{i_1,i_2\}$ ($i_1\neq i_2\in[n]$) appears together in at most $\alpha$ sets of $\mathcal{S}$.
\end{enumerate}
\end{definition}

Note that we allow $\delta>1$ in an $(\alpha,\delta)$-systems. This is different from the statement of the Sylvester-Gallai theorem where $\delta\in[0,1]$. We have the following simple observations.

\begin{lemma} \label{lem:awd}
Let $\mathcal{S}=(S_1,S_2,\ldots,S_w)$ be an $(\alpha,\delta)$-system of some vector space list $\mathcal{V}$. Then $\delta n^2/3\leq w\leq\alpha n^2/2$ and $\delta/\alpha\leq3/2$.
\end{lemma}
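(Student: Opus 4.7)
The plan is to prove both inequalities via double counting, using the size constraints on the sets $S_j$ together with conditions 3 and 4 of the $(\alpha,\delta)$-system definition. The last inequality $\delta/\alpha \leq 3/2$ will then follow by combining the two bounds on $w$.

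For the lower bound $w \geq \delta n^2 / 3$, I would count the number of incidences $N = |\{(i,j) : i \in S_j\}|$ in two ways. On one hand, condition 3 says each $i \in [n]$ lies in at least $\delta n$ sets, so $N \geq n \cdot \delta n = \delta n^2$. On the other hand, condition 1 says each $S_j$ has size at most $3$, so $N \leq 3w$. Combining these gives $w \geq \delta n^2 / 3$.

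For the upper bound $w \leq \alpha n^2 / 2$, I would count the number of incidences of unordered pairs $P = |\{(\{i_1,i_2\}, j) : i_1 \neq i_2,\ \{i_1,i_2\} \subseteq S_j\}|$. On one hand, condition 4 says each pair $\{i_1,i_2\}$ appears in at most $\alpha$ sets, so $P \leq \alpha \binom{n}{2} \leq \alpha n^2 / 2$. On the other hand, each $S_j$ has size at least $2$, hence contributes at least $\binom{2}{2} = 1$ pair, giving $P \geq w$. Combining yields $w \leq \alpha n^2 / 2$.

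Finally, chaining the two bounds gives $\delta n^2 / 3 \leq w \leq \alpha n^2 / 2$, which rearranges to $\delta/\alpha \leq 3/2$. There is no real obstacle here; the argument is a direct application of the pigeonhole/double-counting principle, and the constants $3$ and $2$ come immediately from the maximum and minimum allowed sizes of the sets $S_j$.
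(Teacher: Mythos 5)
Your proposal is correct and uses exactly the same double-counting arguments as the paper: counting element--set incidences for the lower bound on $w$, counting pair--set incidences for the upper bound, and chaining the two to get $\delta/\alpha\leq3/2$. Nothing to add.
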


\begin{proof}
We consider the sum $\sum_{j\in[w]}|S_j|$. By the definition of $(\alpha,\delta)$-system,
$$n\cdot\delta n\leq\sum_{j\in[w]}|S_j|\leq3w\quad\Longrightarrow\quad\delta n^2/3\leq w.$$
Then we consider the number of pairs $\sum_{j\in[w]}\binom{|S_j|}{2}$, we can see
$$w\leq\sum_{j\in[w]}\binom{|S_j|}{2}\leq\alpha\binom{n}{2}\leq\alpha n^2/2.$$
It follows that $\delta/\alpha\leq3/2$.
\end{proof}

\begin{lemma} \label{lem:removebad}
Let $\mathcal{V}=(V_1,V_2,\ldots,V_n)$ ($V_i\subseteq\R^\ell$) be a list of vector spaces and $\mathcal{S}=(S_1,S_2,\ldots,S_w)$ be a list of sets. If $w\geq\delta n^2$ and $\mathcal{S}$ satisfies the first, second and fourth requirements in Definition~\ref{def:adsystem}, then there exists a sublist $\mathcal{V}'$ of $\mathcal{V}$ and a sublist $\mathcal{S}'$ of $\mathcal{S}$ such that $|\mathcal{V}'|\geq\delta n/(2\alpha)$ and $\mathcal{S}'$ is an $(\alpha,\delta/2)$-system of $\mathcal{V}'$.
\end{lemma}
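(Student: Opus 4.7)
The plan is a standard iterative peeling argument. Initialize $I = [n]$ and $\mathcal{T} = \mathcal{S}$. As long as some index $i\in I$ is contained in fewer than $(\delta/2)|I|$ sets of $\mathcal{T}$, remove $i$ from $I$ and delete from $\mathcal{T}$ every set that contains $i$. When no such $i$ remains, take $I'$ to be the surviving index set, let $\mathcal{V}'$ be the corresponding sublist of $\mathcal{V}$, and let $\mathcal{S}' = \mathcal{T}$. Requirements 1, 2 and 4 of Definition~\ref{def:adsystem} are inherited from $\mathcal{S}$ since we only ever \emph{delete} sets (pair-incidence counts can only decrease), and the termination criterion yields requirement 3 with parameter $\delta/2$ relative to $\mathcal{V}'$. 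All that remains is to lower bound $m := |I'|$.

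For this, I would double-count the surviving family $\mathcal{S}'$. On the deletion side, each step removes fewer than $(\delta/2)|I| \leq (\delta/2)n$ sets and there are exactly $n-m$ steps, so at most $(\delta/2)n(n-m)$ sets are discarded in total. Combined with $w \geq \delta n^2$, this gives
\[
|\mathcal{S}'| \;\geq\; \delta n^2 - \tfrac{\delta}{2}n(n-m) \;=\; \tfrac{\delta n}{2}(n+m) \;\geq\; \tfrac{\delta n^2}{2}.
\]
On the containment side, every set in $\mathcal{S}'$ is a subset of $I'$ of size $2$ or $3$, so each contributes at least one pair to the pair-incidence count; by requirement 4 each pair of $I'$ is used in at most $\alpha$ sets, hence $|\mathcal{S}'| \leq \alpha\binom{m}{2} \leq \alpha m^2/2$.

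Combining these two inequalities yields $\alpha m^2 \geq \delta n^2$, i.e., $m \geq n\sqrt{\delta/\alpha}$. To convert this into the form stated in the lemma, I would invoke Lemma~\ref{lem:awd}, which gives $\delta/\alpha \leq 3/2 \leq 4$; under this cap one easily checks that $\sqrt{\delta/\alpha} \geq \delta/(2\alpha)$, so $m \geq \delta n/(2\alpha)$, as required. The whole argument is essentially bookkeeping; the only small nuance is that the raw counting bound naturally produces $n\sqrt{\delta/\alpha}$, and the form $\delta n/(2\alpha)$ stated in the lemma is reached only by appealing to the cap from Lemma~\ref{lem:awd}.
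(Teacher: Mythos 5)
Your proof is correct and follows essentially the same iterative peeling strategy as the paper; the only real difference is the final count, where the paper argues locally (any surviving index lies in at least $\delta n/2$ surviving sets, and each pair is reused at most $\alpha$ times, so at least $\delta n/(2\alpha)$ indices survive), whereas you count globally ($|\mathcal{S}'|\geq\delta n^2/2$ versus $|\mathcal{S}'|\leq\alpha m^2/2$), which actually yields the stronger bound $m\geq n\sqrt{\delta/\alpha}$. One small repair: Lemma~\ref{lem:awd} is stated for $(\alpha,\delta)$-systems and its hypotheses are not literally met here, since requirement~3 is not assumed for $\mathcal{S}$; but the cap you need follows directly from the same counting, namely $\delta n^2\leq w\leq\sum_j\binom{|S_j|}{2}\leq\alpha n^2/2$ gives $\delta/\alpha\leq1/2\leq4$, so $\sqrt{\delta/\alpha}\geq\delta/(2\alpha)$ as you claim (alternatively, apply Lemma~\ref{lem:awd} to the constructed system $\mathcal{S}'$ of $\mathcal{V}'$).
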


\begin{proof}
 We iteratively remove all $V_i$'s that appear in less than $\delta n/2$ sets, and the sets they appear in. There are $n$ $V_i$'s in total, so we can remove at most $n\cdot\delta n/2$ sets. When the procedure ends, we still have at least $\delta n^2-\delta n^2/2\geq\delta n^2/2$ sets. So we do not remove all of $V_1,V_2,\ldots,V_n$. For a remaining $V_i$, since it appears in at least $\delta n/2$ sets, we must still have at least $\delta n/(2\alpha)$ vector spaces left. Let $\mathcal{V}'$ be the list of these spaces and $\mathcal{S}'$ be the list of the remaining sets. We can see that $\mathcal{S}'$ is an $(\alpha,\delta/2)$-system of $\mathcal{V}'$.
\end{proof}

\begin{lemma} \label{lem:linearmap}
Let $\mathcal{V}=(V_1,V_2,\ldots,V_n)$ ($V_i\subseteq\R^\ell$) be a list of vector spaces with an $(\alpha,\delta)$-system $\mathcal{S}=(S_1,S_2,\ldots,S_w)$. Then for any linear map $P:\R^\ell\mapsto\R^\ell$, $\mathcal{S}$ is also an $(\alpha,\delta)$-system of $\mathcal{V}'=(V_1',V_2',\ldots,V_n')$, where $V_i'=P(V_i)$.
\end{lemma}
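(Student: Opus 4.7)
The plan is to verify each of the four conditions in Definition~\ref{def:adsystem} for the pair $(\mathcal{V}', \mathcal{S})$. Conditions 1, 3, and 4 are purely combinatorial statements about the sizes of the sets $S_j \subseteq [n]$ and the multiplicities with which single indices and pairs of indices appear in $\mathcal{S}$. They make no reference whatsoever to the underlying vector spaces, so they carry over from $(\mathcal{V}, \mathcal{S})$ to $(\mathcal{V}', \mathcal{S})$ for free.

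The only condition requiring justification is condition 2, which prescribes subspace relations among the $V'_{i}$. For a size-$3$ set $S_j = \{i_1, i_2, i_3\}$ we have $V_{i_1} \subseteq V_{i_2} + V_{i_3}$ by hypothesis, so applying the linear map $P$ to both sides and using $P(U + W) = P(U) + P(W)$ yields $V'_{i_1} = P(V_{i_1}) \subseteq P(V_{i_2}) + P(V_{i_3}) = V'_{i_2} + V'_{i_3}$; the other two cyclic inclusions are identical, and for a size-$2$ set $\{i_1, i_2\}$ the equality $V_{i_1} = V_{i_2}$ immediately gives $V'_{i_1} = V'_{i_2}$.

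I expect no real obstacle. The one thing worth flagging — since it might superficially look like a problem — is a non-issue: $P$ is not assumed to be invertible, and some of the $V'_i$ may collapse to $\{\vec{0}\}$ or coincide with one another. This is harmless because Definition~\ref{def:adsystem} does not demand that the spaces in the list be distinct, nontrivial, or pairwise transverse (contrast with the hypotheses of Theorem~\ref{thm:sg}); it only asserts inclusion and equality relations, both of which are preserved by an arbitrary linear map. Hence the lemma reduces to this one-line observation about linearity, and no quantitative estimate or counting argument is needed.
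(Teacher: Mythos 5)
Your proof is correct and takes essentially the same approach as the paper: both verify condition 2 of Definition~\ref{def:adsystem} by applying $P$ to the inclusion $V_{i_1}\subseteq V_{i_2}+V_{i_3}$ and using $P(U+W)=P(U)+P(W)$. Your version is a bit more explicit in noting that conditions 1, 3, 4 are purely combinatorial and carry over automatically, and in flagging that non-invertibility of $P$ is harmless, but the substance is identical.
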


\begin{proof}
This is trivial since, if $V_{i_1}\subseteq V_{i_2}+V_{i_3}$, then
\begin{equation*}
V_{i_1}'=P(V_{i_1})\subseteq P(V_{i_2}+V_{i_3})=P(V_{i_2})+P(V_{i_3})=V_{i_2}'+V_{i_3}'.\qedhere
\end{equation*}
\end{proof}

\begin{lemma} \label{lem:removezero}
Let $\mathcal{V}=(V_1,V_2,\ldots,V_n)$ ($V_i\subseteq\R^\ell$) be a list of vector spaces with an $(\alpha,\delta)$-system $\mathcal{S}=(S_1,S_2,\ldots,S_w)$. Suppose we remove all zero ($\{\vec{0}\}$) spaces in $\mathcal{V}$ in the following way:
\begin{enumerate}
\item Let $n'$ be the number of nonzero (not $\{\vec{0}\}$) vector spaces in $\mathcal{V}$, and $\phi$ be a one-to-one mapping from the indices of nonzero spaces to $[n']$. We define $V_1'=V_{\phi^{-1}(1)}$, $V_2'=V_{\phi^{-1}(2)}$,\ldots, $V_{n'}'=V_{\phi^{-1}(n')}$ to be all the nonzero spaces.
\item For each $S_j$ ($j\in[w]$), we define $S_j'=\{\phi(i):i\in S_j,V_i\neq\{\vec{0}\}\}$.
\item We remove the $S_j$'s that are empty.
\end{enumerate}
Let $\mathcal{S}'$ be the list of the remaining sets in $S_1',S_2',\ldots,S_w'$. Then $\mathcal{S}'$ is an $(\alpha,\delta')$-system of $\mathcal{V}'=(V_1',V_2',\ldots,V_{n'}')$, where $\delta'=\delta n/n'$.
\end{lemma}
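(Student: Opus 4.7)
The plan is to verify the four defining conditions of an $(\alpha,\delta')$-system one by one, with $\delta'=\delta n/n'$, by tracking what each original set $S_j$ becomes after the deletion/reindexing procedure. The bulk of the work is a short case analysis on $|S_j|$ ensuring that conditions 1 and 2 survive; conditions 3 and 4 are essentially bookkeeping once $\phi$ is in place.

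The key case to handle is a triple $S_j=\{i_1,i_2,i_3\}$ in which exactly one space, say $V_{i_1}$, is zero. Applying the triple condition in Definition~\ref{def:adsystem} gives $V_{i_2}\subseteq V_{i_1}+V_{i_3}=V_{i_3}$ and similarly $V_{i_3}\subseteq V_{i_2}$, hence $V_{i_2}=V_{i_3}$; so $S_j'=\{\phi(i_2),\phi(i_3)\}$ is a valid pair satisfying requirement~2. If two (or three) of the $V_{i_a}$ are zero then the same implications force the third to be zero as well, so $S_j'=\emptyset$ and is discarded. For a pair $S_j=\{i_1,i_2\}$ with $V_{i_1}=V_{i_2}$, either both spaces are zero (and $S_j'$ is discarded) or both are nonzero (and $S_j'$ is again a valid pair). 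Thus every surviving $S_j'$ has size in $\{2,3\}$ and satisfies the linear-dependence relations required by conditions~1 and~2.

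Condition~4 is immediate: any pair $\{\phi(i),\phi(i')\}$ appearing together in some $S_j'$ comes from the pair $\{i,i'\}$ appearing together in $S_j$, and since $\phi$ is injective the multiplicity in $\mathcal{S}'$ is at most the multiplicity in $\mathcal{S}$, which is at most $\alpha$. For condition~3, fix any $i^*\in[n']$ and write $i=\phi^{-1}(i^*)$; by construction $V_i\ne\{\vec{0}\}$, so none of the sets of $\mathcal{S}$ containing $i$ get $i$ stripped out when forming $S_j'$, i.e.\ every such set contributes a set of $\mathcal{S}'$ containing $i^*$. The original system guarantees at least $\delta n$ such sets, so $i^*$ is contained in at least
$$\delta n \;=\; \frac{\delta n}{n'}\cdot n' \;=\; \delta' n'$$
sets of $\mathcal{S}'$, which is exactly condition~3. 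This verifies all four conditions and proves the lemma. The only slightly nontrivial step—and the one worth being careful about—is the case analysis above showing that a triple cannot degenerate to a singleton; everything else is bookkeeping.
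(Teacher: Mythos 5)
Your proof is correct and follows essentially the same route as the paper's: the same case analysis on how many spaces in each $S_j$ are zero (showing a triple survives as a triple, degenerates to a pair of equal spaces, or vanishes entirely, but never becomes a singleton), followed by the same bookkeeping for conditions 3 and 4 via the injectivity of $\phi$ and the identity $\delta n=\delta'n'$.
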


\begin{proof}
We first consider an $S_j$ containing 3 elements $i_1$, $i_2$ and $i_3$. If none of $V_{i_1}$, $V_{i_2}$, $V_{i_2}$ is $\{\vec{0}\}$, we have $S_j'=\{\phi(i_1),\phi(i_2),\phi(i_3)\}$ and it satisfies the second requirement. If exactly one of them is $\{\vec{0}\}$, say $V_{i_3}=\{\vec{0}\}$, we can see $S_j'=\{\phi(i_1),\phi(i_2)\}$ and $V_{\phi(i_1)}'=V_{\phi(i_2)}'$ by $V_{i_1}\subseteq V_{i_2}+\{\vec{0}\}$, $V_{i_2}\subseteq V_{i_1}+\{\vec{0}\}$. If exactly two of them are $\{\vec{0}\}$, say $V_{i_2}=V_{i_3}=\{\vec{0}\}$, then $V_{i_1}\subseteq V_{i_2}+V_{i_3}$ must also be $\{\vec{0}\}$, contradiction. If all $V_{i_1}$, $V_{i_2}$, $V_{i_3}$ are $\{\vec{0}\}$, $S_j'=\emptyset$ and it is removed.

We then consider an $S_j$ containing 2 elements $i_1$ and $i_2$. If neither of $V_{i_1}$, $V_{i_2}$ is $\{\vec{0}\}$, we have $S_j'=\{\phi(i_1),\phi(i_2)\}$ and $V_{\phi(i_1)}'=V_{\phi(i_2)}'$. If one of them is $\{\vec{0}\}$, the other must be $\{\vec{0}\}$ by $V_{i_1}=V_{i_2}$, and $S_j'$ is removed.

In summary, the first two requirements of the definition of $(\alpha,\delta)$-system are satisfied. We can also see that each $i\in[n']$ is contained in at least $\delta'n'=\delta n$ sets and each pair $\{i_1,i_2\}$ with $i_1\neq i_2\in[n']$ is contained in at most $\alpha$ sets, because we have only removed the sets containing only indices of zero spaces. Therefore the third and fourth requirements are also satisfied.
\end{proof}

Combining the above two lemmas, we have the following corollary.

\begin{corollary} \label{cor:remove}
Let $\mathcal{V}=(V_1,V_2,\ldots,V_n)$ ($V_i\subseteq\R^\ell$) be a list of vector spaces with an $(\alpha,\delta)$-system, and $P:\R^\ell\mapsto\R^\ell$ be any linear map. Define $\mathcal{V}'=(V_1',V_2',\ldots,V_{n'}')$ to be the list of nonzero spaces in $P(V_1),P(V_2),\ldots,P(V_n)$. Then $\mathcal{V}'$ has an $(\alpha,\delta')$-system, where $\delta'=\delta n/n'$.
\end{corollary}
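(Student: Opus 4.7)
The statement is a direct composition of Lemma~\ref{lem:linearmap} and Lemma~\ref{lem:removezero}, so the plan is essentially a two-step chaining argument with no new ingredients.

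First I would apply Lemma~\ref{lem:linearmap} with the linear map $P$ to the list $\mathcal{V}=(V_1,\ldots,V_n)$ and its $(\alpha,\delta)$-system $\mathcal{S}$. This yields that the list $\tilde{\mathcal{V}}=(P(V_1),P(V_2),\ldots,P(V_n))$ admits $\mathcal{S}$ itself as an $(\alpha,\delta)$-system: the set sizes and pair-multiplicity bounds are syntactic properties of $\mathcal{S}$ and are unaffected, and the containment relations $V_{i_1}\subseteq V_{i_2}+V_{i_3}$ and the equalities $V_{i_1}=V_{i_2}$ are preserved under any linear map.

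Next I would apply Lemma~\ref{lem:removezero} to $\tilde{\mathcal{V}}$ together with its $(\alpha,\delta)$-system $\mathcal{S}$. Let $n'$ be the number of nonzero spaces in $\tilde{\mathcal{V}}$; the lemma, applied verbatim, produces the sublist $\mathcal{V}'=(V_1',\ldots,V_{n'}')$ of nonzero spaces and a corresponding list $\mathcal{S}'$ of sets which forms an $(\alpha,\delta'')$-system of $\mathcal{V}'$, where $\delta''=\delta n/n'$. This is precisely the conclusion $\delta'=\delta n/n'$ claimed in the corollary.

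There is no genuine obstacle here: the only thing to verify is that the hypothesis of Lemma~\ref{lem:removezero} (namely, that $\mathcal{V}$ has an $(\alpha,\delta)$-system to begin with) is supplied by the output of Lemma~\ref{lem:linearmap}, which it is. The corollary statement is then just a convenient packaging of the two lemmas for later use (in the main argument, where one first applies the Barthe-style change of basis $P$ and then discards any spaces that collapsed to zero).
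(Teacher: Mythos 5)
Your proof is correct and matches the paper's approach exactly: the paper introduces the corollary with the phrase ``Combining the above two lemmas,'' meaning precisely the chaining of Lemma~\ref{lem:linearmap} followed by Lemma~\ref{lem:removezero} that you carry out. Nothing further is needed.
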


Theorem~\ref{thm:sg}, will be  derived from the following, more general statement, saying that the dimension $d$ is small if there is a $(\alpha,\delta)$-system.

\begin{definition}[$k$-bounded]
A vector space $V\subseteq\R^\ell$ is {\em $k$-bounded} if $\dim V\leq k$.
\end{definition}

\begin{theorem} \label{thm:main}
Let $\mathcal{V}=(V_1,V_2,\ldots,V_n)$ ($V_i\subseteq\R^\ell$) be a list of $k$-bounded vector spaces with an $(\alpha,\delta)$-system and $d=\dim(V_1+V_2+\cdots+V_n)$, then $d=O(\alpha^2 k^4/\delta^2)$.
\end{theorem}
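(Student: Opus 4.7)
The plan is to prove Theorem~\ref{thm:main} by a two-case dichotomy combined with induction on $d=\dim(V_1+\cdots+V_n)$. Fix a threshold $t=\Theta(\alpha k^2/\delta)$. Either there exists a low-dimensional obstruction---a subspace $W\subseteq\R^\ell$ with $\dim W\leq t$ that intersects nontrivially at least $\delta n/3$ of the $V_i$---or no such $W$ exists (the robust case). In the obstruction case, let $P$ be the projection of $\R^\ell$ along $W$. By Corollary~\ref{cor:remove}, the nonzero spaces among $P(V_1),\ldots,P(V_n)$ inherit an $(\alpha,\delta')$-system with $\delta'\geq\delta$, remain $k$-bounded, and span a subspace of dimension at least $d-t$. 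Applying the theorem inductively to this smaller arrangement and adding back $t$ to the dimension gives a bound that is absorbed into the stated $O(\alpha^2k^4/\delta^2)$.

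The robust case invokes Theorem~\ref{thm:barthe}. The absence of a low-dimensional obstruction is exactly what is needed to guarantee, via an LP-duality / Hall-type argument, the existence of a vector $\vec{p}$ in the convex hull of $\mathcal{V}$-admissible basis vectors whose coordinates are bounded below by $\Omega(d/(kn))$: any violation of this convex-hull condition yields, via a separating hyperplane against the admissibility constraints, a subspace $W$ of small dimension meeting many $V_i$'s, contradicting robustness. Applying Theorem~\ref{thm:barthe} with a small $\varepsilon$ then yields an invertible $M$ with $\sum_i p_i \Proj_{M(V_i)}\approx I_d$. Evaluating at a unit vector $\vec{u}\in M(V_{i_1})$ and using $\sum_i p_i = d/k$ gives the \emph{average large-angle} bound $(1/\sum_i p_i)\sum_i p_i \|\Proj_{M(V_i)}\vec{u}\|^2 = O(k/d)$, so the subspaces $M(V_i)$ are on average nearly orthogonal to any fixed $M(V_{i_1})$.

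With this angle control, fix orthonormal bases $\vec{b}_{i,1},\ldots,\vec{b}_{i,k}$ of each $M(V_i)$. Every dependent set $S\in\mathcal{S}$ with $M(V_{i_3})\subseteq M(V_{i_1})+M(V_{i_2})$ produces, for each $j\in[k]$, a linear dependency of the $nk$ basis vectors with coefficient $1$ at position $(i_3,j)$ and coefficients elsewhere bounded in terms of the relevant pairwise angles. These dependencies live in the nullspace of the $nk\times d$ basis-coordinate matrix, whose dimension is $nk-d$. Stacking one weighted average of such dependencies per index into an $nk\times nk$ matrix $A$, the diagonal is of magnitude $\gtrsim\delta n$ while off-diagonal entries are controlled by $\alpha$ times the average cosines; a Gershgorin/spectral estimate then bounds the nullity of $A$, and hence $d$, by $O(\alpha^2 k^4/\delta^2)$.

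The main obstacle is the \emph{translation step}: turning the combinatorial assertion ``no low-dim $W$ meets many $V_i$'' into the geometric convex-hull hypothesis of Theorem~\ref{thm:barthe} with a uniform lower bound on each $p_i$. This requires an LP-duality argument whose dual witness is exactly an obstructing $W$ and whose quantitative gap determines the final constants. A secondary subtlety is that the spectral argument only has \emph{average} angle control, so the construction of $A$ must be robust to a small population of nearly parallel pairs---either through a cleaning step that preserves the $(\alpha,\delta)$-system or through careful weighting that washes out the few bad pairs.
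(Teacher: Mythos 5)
Your high-level architecture is the same as the paper's (obstruction/robust dichotomy, Barthe-type change of basis in the robust case, diagonally-dominant nullspace matrix for the well-separated case), but two steps that you flag as ``obstacles'' are in fact genuine gaps, and they are precisely where the paper's work lies. First, the translation step. The LP dual of the condition ``$\vec{p}$ with $\min_i p_i\geq c\,d/(kn)$ lies in the convex hull of admissible vectors'' naturally produces a subspace $W$ such that many $V_i$ are \emph{contained} in $W$ (a locking subspace), whereas your recursion needs a $W$ that merely \emph{intersects} many $V_i$ nontrivially --- a weaker and different kind of certificate --- so a separating-hyperplane argument does not directly deliver the object you project away. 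The paper instead runs a greedy random process that builds an admissible set, sets $p_i=\Pr[i\in H]$, and shows (by conditioning on whether $i$ is picked in the first $t$ steps) that if $p_i$ is small for many $i$ then a random prefix of the process spans a subspace meeting at least $\delta n/(20\alpha)$ of the $V_i$. Crucially, the obstruction this yields has dimension about $\beta d$, i.e.\ \emph{proportional to} $d$, not an absolute threshold $t=\Theta(\alpha k^2/\delta)$; and the lower bound $p_i\geq\beta d/(4kn)$ holds only for a $(1-\delta/(10\alpha))$-fraction of indices, which forces an additional bad-pair count later. Your version asserts a uniform lower bound for all $i$ from the absence of a fixed-dimension obstruction; that implication is not established and, as stated, I do not see how to prove it.

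Second, the robust case as you describe it does not terminate the argument. After discarding pairs that are not well separated, the surviving $(\alpha,\delta/20)$-system may only cover a sublist of as few as $\delta n/(20\alpha)$ spaces (this is all Lemma~\ref{lem:removebad} guarantees: indices all of whose triples involve bad pairs must be thrown away, and there can be many of them). The spectral argument then bounds the dimension of the span of that sublist only. The paper's resolution is that this outcome is itself just another instance of the obstruction case --- a large collection of spaces confined to a subspace of dimension at most $\beta d$ --- so it is fed back into the recursion rather than concluding the proof; the recursion terminates because each round decreases $\sum_i\dim V_i$ by at least $\delta n/(20\alpha)$, giving at most $20\alpha k/\delta$ rounds, and the multiplicative losses $(1-\beta)^{-1}$ per round are controlled by choosing $\beta=\min\{1/2,\delta/(\alpha k)\}$. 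Relatedly, your ``absorbed into the stated $O(\alpha^2k^4/\delta^2)$'' hides exactly this accounting: an additive loss of $t$ per level of an induction of unbounded depth cannot be absorbed; you must bound the number of levels and sum (or compound) the losses, which is where the final exponent on $k$ and $\delta$ actually comes from.
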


We can easily reduce the high dimensional Sylvester-Gallai problem in $\C^\ell$ (Theorem~\ref{thm:sg}) to the setting of Theorem~\ref{thm:main} in $\R^\ell$ as shown below.

\begin{proof}[Proof of Theroem~\ref{thm:sg} using Theorem~\ref{thm:main}]
Let $B_j=\{\vec{v}_{j1},\vec{v}_{j2},\ldots,\vec{v}_{jk}\}$ be a basis of $V_j$. Define
$$V_j'=\spn\big\{\Rea(\vec{v}_{j1}),\Rea(\vec{v}_{j2}),\ldots,\Rea(\vec{v}_{jk}),\Ima(\vec{v}_{j1}),\Ima(\vec{v}_{j2}),\ldots,\Ima(\vec{v}_{jk})\big\}\quad\forall j\in[n].$$

\begin{claim} \label{clm:complex}
$V_j'=\{\Rea(\vec{v}):\vec{v}\in V_j\}$ for every $j\in[n]$.
\end{claim}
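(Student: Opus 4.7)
The plan is to prove the two set inclusions, using the standard identities relating real/imaginary parts to multiplication by $i$ and the fact that $V_j$ is closed under complex scalar multiplication.

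First I would show the inclusion $V_j' \subseteq \{\Rea(\vec{v}) : \vec{v}\in V_j\}$. An arbitrary element of $V_j'$ is a real linear combination
$\sum_{s=1}^k a_s \Rea(\vec{v}_{js}) + \sum_{s=1}^k b_s \Ima(\vec{v}_{js})$
with $a_s, b_s \in \R$. The key observation is $\Ima(\vec{w}) = \Rea(-i\vec{w})$ and $\Rea(\lambda\vec{w}) = \lambda\Rea(\vec{w})$ for real $\lambda$, so the combination rewrites as $\Rea\bigl(\sum_{s=1}^k (a_s - i b_s)\vec{v}_{js}\bigr)$. Since $V_j$ is a complex subspace, the argument lies in $V_j$, giving the inclusion.

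For the reverse inclusion $\{\Rea(\vec{v}) : \vec{v}\in V_j\} \subseteq V_j'$, I would expand any $\vec{v}\in V_j$ in the basis as $\vec{v} = \sum_{s=1}^k (a_s + i b_s)\vec{v}_{js}$ with $a_s,b_s\in\R$, and then use $\Rea((a+ib)\vec{w}) = a\Rea(\vec{w}) - b\Ima(\vec{w})$ to obtain
$\Rea(\vec{v}) = \sum_{s=1}^k a_s \Rea(\vec{v}_{js}) - \sum_{s=1}^k b_s \Ima(\vec{v}_{js})$,
which by definition lies in $V_j'$.

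There is no real obstacle here; the statement is essentially the standard identification of a complex $k$-dimensional subspace with a real $2k$-dimensional one via the realification functor, and the only thing to be careful about is handling the imaginary part by the trick $\Ima(\vec{w}) = \Rea(-i\vec{w})$ (which is what uses complex — not merely real — closure of $V_j$).
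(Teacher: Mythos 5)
Your proof is correct and uses essentially the same argument as the paper: the key identity $\Ima(\vec{w})=\Rea(-i\vec{w})$, rewriting a real combination of the $\Rea(\vec{v}_{js}),\Ima(\vec{v}_{js})$ as $\Rea$ of a complex combination of the $\vec{v}_{js}$. The only cosmetic difference is that you split the equality into two explicit inclusions, whereas the paper does the same computation once and then observes that, because the real coefficients range over all of $\R^{2k}$ while the complex coefficients $\lambda_s - i\mu_s$ simultaneously range over all of $\C^k$, the single identity already gives both containments at once.
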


\begin{proof}
For every $\vec{v}'\in V_j'$, there exist $\lambda_1,\lambda_2,\ldots,\lambda_k,\mu_1,\mu_2,\ldots,\mu_k\in\R$ such that
\begin{equation*}\begin{split}
\vec{v}'&=\sum_{s=1}^k\Big(\lambda_s\Rea(\vec{v}_{js})+\mu_s\Ima(\vec{v}_{js})\Big)=\sum_{s=1}^k\Big(\lambda_s\Rea(\vec{v}_{js})+\mu_s\Rea(-i\vec{v}_{js})\Big) \\
& =\Rea\left(\sum_{s=1}^k(\lambda_s-i\mu_s)\vec{v}_{js}\right).
\end{split}\end{equation*}
Since $\lambda_1,\lambda_2,\ldots,\lambda_k,\mu_1,\mu_2,\ldots,\mu_k$ can take all values in $\R$, we can see the claim is proved.
\end{proof}

\begin{claim}[{\cite[Lemma~2.1]{BDWY-pnas}}] \label{clm:triple}
Given a set $A$ with $r\geq3$ elements, we can construct a family of  $r^2-r$ triples of elements in $A$ with following properties: 1) Every triple contains three distinct element; 2) Every element of $A$ appears in exactly $3(r-1)$ triples; 3) Every pair of two distinct elements in $A$ is contained together in at most $6$ triples.
\end{claim}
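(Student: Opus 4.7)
The plan is to construct the family explicitly by parametrizing triples via ordered pairs. I will identify $A$ with $\Z/r\Z$ and, for each ordered pair $(a,b)$ with $a\neq b$, choose a ``third element'' $f(a,b)\in A\setminus\{a,b\}$, taking the triple $T_{a,b}=\{a,b,f(a,b)\}$. This yields $r(r-1)=r^2-r$ triples, each with three distinct elements, so property~(1) is automatic.

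For property~(2), I would observe that $i\in A$ lies in $T_{a,b}$ in exactly one of three mutually exclusive slots, so the total number of triples containing $i$ equals $(r-1)+(r-1)+|f^{-1}(i)|$; thus property~(2) reduces to arranging that $|f^{-1}(i)|=r-1$ for every $i$, i.e.\ that $f$ is balanced on its codomain. For property~(3), the plan is a case analysis on which two of the three coordinates of $(a,b,f(a,b))$ equal $i$ and $j$, which expresses the number of triples containing the pair $\{i,j\}$ as
\[
2\;+\;|\{b:f(i,b)=j\}|\;+\;|\{b:f(j,b)=i\}|\;+\;|\{a:f(a,i)=j\}|\;+\;|\{a:f(a,j)=i\}|,
\]
so it will suffice to choose $f$ so that each of the last four counts is at most~$1$.

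The concrete choice I would use when $r$ is odd is the reflection $f(a,b)=2a-b\pmod r$: it lands in $A\setminus\{a,b\}$ since $2a-b\in\{a,b\}$ forces $a=b$ when $2$ is invertible, it satisfies $|f^{-1}(i)|=r-1$, and each of the four counts above is exactly $1$ because $2a-b=j$ is linear and uniquely solvable in each variable, giving the pair bound of exactly~$6$. The main obstacle will be the even case, where $2$ is not invertible and the reflection degenerates: there I would substitute a family of derangements $\pi_a$ of $A\setminus\{a\}$ that are ``nearly'' a cyclic shift---explicitly, $\pi_a(b)=b+1$ unless $b+1=a$, in which case $\pi_a(b)=b+2$---and then verify balance and the pair bound of~$6$ by a short, finite check on the four count terms above.
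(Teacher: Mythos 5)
The paper does not actually prove this claim---it imports it verbatim from \cite{BDWY-pnas}---so there is no in-paper argument to compare against; your proposal has to stand on its own. Your framework (parametrize by ordered pairs, reduce property~2 to $|f^{-1}(i)|=r-1$ and property~3 to the four ``count'' terms each being at most $1$) is sound, and the odd case with $f(a,b)=2a-b$ is completely correct: the triple is a $3$-AP with midpoint $a$, and all four counts are exactly $1$ by invertibility of $2$.

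The even case, however, fails. With $f(a,b)=\pi_a(b)$ where $\pi_a$ is your near-shift, the value $f(a,i)$ equals $i+1$ for \emph{every} $a\notin\{i,i+1\}$, because the shift is (almost) independent of the first argument. Hence the count $|\{a: f(a,i)=i+1\}|$ is $r-2$, not at most $1$, and the pair $\{i,i+1\}$ lies in at least $2+(r-2)=r$ triples (a full count gives $r+2$ when $3\nmid r$), violating the bound of $6$ for every even $r\geq 6$. The flaw is structural: your own criterion requires not only that each row map $b\mapsto f(a,b)$ be injective (which your derangements $\pi_a$ guarantee) but also that each \emph{column} map $a\mapsto f(a,b)$ be injective, and a construction in which the third element depends essentially only on $b$ cannot satisfy the latter. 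What you need is a Latin-square-like $f$: an array on $A\times A$ whose off-diagonal rows and columns are both injective and which avoids $\{a,b\}$ in cell $(a,b)$. Any idempotent Latin square (quasigroup) of order $r$ does the job---idempotence plus row/column injectivity forces $L(a,b)\notin\{a,b\}$ off the diagonal, and each symbol then appears exactly $r-1$ times off-diagonal---and such squares exist for every $r\neq 2$; your $f(a,b)=2a-b$ is exactly the instance $L(a,b)=a\cdot b^{-1}\cdot a$... rather, the odd-order affine example of this. Until the even case is replaced by such a construction (or by a uniform one), the proof is incomplete.
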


We call a $2k$-dimensional subspace $U \subset \C^\ell$  {\em special} if it contains at least three of $V_1,V_2,\ldots,V_n$. We define the {\em size} of a special space as the number of spaces among $V_1,V_2,\ldots,V_n$ contained in it. For a special space with size $r$, we add the $r^2-r$ triples of indices of the spaces in it with the properties in Claim~\ref{clm:triple}. Let $\mathcal{S}$ be the family of all these triples. We claim that $\mathcal{S}$ is a $(6,3\delta)$-system of $\mathcal{V}=(V_1',V_2',\ldots,V_n')$.

For every triple $\{j_1,j_2,j_3\}\in\mathcal{S}$, we can see that $V_{j_1},V_{j_2},V_{j_3}$ are contained in the same $2k$-dimensional special space. And by $V_{j_1}\cap V_{j_2}=\{\vec{0}\}$, the space must be $V_{j_1}+V_{j_2}$ and hence $V_{j_3}\subseteq V_{j_1}+V_{j_2}$. By Claim~\ref{clm:complex},
$$V_{j_3}'=\{\Rea(\vec{v}):\vec{v}\in V_{j_3}\}\subseteq\{\Rea(\vec{u})+\Rea(\vec{w}):\vec{u}\in V_{j_1},\vec{w}\in V_{j_2}\}=V_{j_1}'+V_{j_2}'.$$
Similarly, $V_{j_1}'\subseteq V_{j_2}'+V_{j_3}'$ and $V_{j_2}'\subseteq V_{j_1}'+V_{j_3}'$. One can see every pair in $[n]$ appears in at most $6$ triples because the corresponding two spaces are contained in at most one special space, and the pair appears at most $6$ times in the triples constructed from this special space. For every $j\in[n]$, there are at least $\delta n$ values of $j'\in[n]\setminus\{j\}$ such that there is a special space containing $V_j$ and $V_{j'}$. This implies that the number of triples that $j$ appears in is
$$\sum_{\text{special space }U\atop V_j\subseteq U}3\big(\operatorname{size}(U)-1\big)=3\sum_{\text{special space }U\atop V_j\subseteq U}\big|\{j'\neq j:V_{j'}\subseteq U\}\big|\geq3\delta n.$$
Therefore $\mathcal{S}$ is a $(6,3\delta)$-system of $\mathcal{V}$. By Theorem~\ref{thm:main},
$$\dim(V_1'+V_2'+\cdots+V_n')=O(6^2(2k)^4/(3\delta)^2)=O(k^4/\delta^2).$$
Note that
\begin{equation*}\begin{split}
V_1+V_2+\cdots+V_n & \subseteq \spn\big\{\Rea(\vec{v}_{js}),\Ima(\vec{v}_{js})\big\}_{j\in[n],s\in[k]}\quad(\text{span with complex coefficients}), \\
V_1'+V_2'+\cdots+V_n' & = \spn\big\{\Rea(\vec{v}_{js}),\Ima(\vec{v}_{js})\big\}_{j\in[n],s\in[k]}\quad(\text{span with real coefficients}).
\end{split}\end{equation*}
We thus have $\dim(V_1+V_2+\cdots+V_n)\leq\dim(V_1'+V_2'+\cdots+V_n')=O(k^4/\delta^2)$.
\end{proof}

\section{A generalization of Barthe's Theorem} \label{sec:barthe}

We prove Theorem~\ref{thm:barthe} in the following 3 subsections. In the fourth and last subsection, we state a convenient variant of the theorem  (Theorem~\ref{thm:barthec}) that will be used later in the proof of our main result. The idea of the proof is similar to~\cite{Bar98} (see also \cite[Section 5]{DSW14}), which considers the maximum point of a function, and using the fact that all derivatives are $0$ the result is proved. Here we consider a similar function $f$ defined in Section~\ref{sec:barthefunc}. However, since our problem is more complicated, it is unclear whether we can find a maximum point at which all derivatives are $0$. Instead we will show that there is a point with very small derivatives in Section~\ref{sec:optimumpt}, which is sufficient for our proof of the theorem in Section~\ref{sec:barthepf}.

\subsection{The function and basic properties} \label{sec:barthefunc}

Let $k_1,k_2,\ldots,k_n$ be the dimensions of $V_1,V_2,\ldots,V_n$ respectively and $m=k_1+k_2+\cdots+k_n$. Throughout our proof, we use pairs $(i,j)$ with  $i\in[n]$, $j\in[k_i]$ to denote the element of $[m]$ of position $\sum_{i'<i}k_{i'} + j$. We define a vector $\vec{\gamma}\in\R^m$ as
$$\gamma_{ij}=p_i\qquad\forall i\in[n],j\in[k_i].$$
For every $i\in[n]$, we fix $\{\vec{v}_{i1},\vec{v}_{i2},\ldots,\vec{v}_{ik_i}\}$ to be some basis of $V_i$ (not necessarily orthonormal). A set $I\subseteq[m]$  is called a {\em good basis set} if
$$I=\bigcup_{i\in H}\big\{(i,1),(i,2),\ldots,(i,k_i)\big\}$$
for some $\mathcal{V}$-admissible basis set $H$. We can see that for any good basis set $I$, the set $\{\vec{v}_{ij}:(i,j)\in I\}$ is a basis of $\R^\ell$. For a list of vectors $\vec{a}_1,\vec{a}_2,\ldots,\vec{a}_q$ ($q\in\Z^+$), we use $[\vec{a}_1,\vec{a}_2,\ldots,\vec{a}_q]$ to denote the matrix consisting of columns $\vec{a}_1,\vec{a}_2,\ldots,\vec{a}_q$.

Let $\mathbf{O}(s)$ be the group of $s\times s$ orthogonal matrices. The function $f:\R^m\times \mathbf{O}(k_1)\times \mathbf{O}(k_2)\times\cdots\times \mathbf{O}(k_n)\mapsto\R$ is defined as
$$f(\vec{t},R_1,\ldots,R_n)=\langle\vec{\gamma},\vec{t}\rangle-\ln\det\left(\sum_{i\in[n],j\in[k_i]}e^{t_{ij}}\vec{x}_{ij}\vec{x}_{ij}^T\right),$$
where, for every $i\in[n]$, the vectors $\vec{x}_{ij}$ are given by
$$ [\vec{x}_{i1},\ldots,\vec{x}_{ik_i}]=[\vec{v}_{i1},\ldots,\vec{v}_{ik_i}]R_i.$$
We note that here for every $i\in[n]$, $j\in[k_i]$, $\vec{x}_{ij}$ is a function of $R_i$ and $\{\vec{x}_{i1},\ldots,\vec{x}_{ik_i}\}$ is another basis of $V_i$.

The next lemma shows that the function $f$ is bounded over its domain. The proof is similar to Proposition~3 in \cite{Bar98}. For completeness, we include the proof here.

\begin{lemma} \label{lem:bounded}
There is a constant $C\in\R$ such that $f(\vec{t},R_1,\ldots,R_n)\leq C$ for all $\vec{t}\in\R^m$ and $R_i\in \mathbf{O}(k_i)$ ($i\in[n]$).
\end{lemma}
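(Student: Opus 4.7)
The plan is to follow the strategy of Proposition~3 of~\cite{Bar98}: expand $\det\bigl(\sum e^{t_{ij}}\vec{x}_{ij}\vec{x}_{ij}^{T}\bigr)$ by the Cauchy--Binet formula, retain only the terms indexed by good basis sets, and exploit the orthogonality of the $R_i$ to eliminate the dependence of those terms on the rotations.

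Concretely, I would write $M=XDX^{T}$ with $X$ the $\ell\times m$ matrix of columns $\vec{x}_{ij}$ and $D=\diag(e^{t_{ij}})$, so that Cauchy--Binet gives
$$\det(M)=\sum_{I\subseteq[m],\,|I|=\ell}\det(X_I)^{2}\prod_{(i,j)\in I}e^{t_{ij}}.$$
All terms are non-negative, so discarding every $I$ that is not a good basis set still yields a valid lower bound on $\det(M)$, which in turn upper-bounds $-\ln\det(M)$.

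Next, for a good basis set $I$ arising from a $\mathcal{V}$-admissible basis set $H$, the columns of $X_I$ decompose as the blocks $[\vec{v}_{i1},\ldots,\vec{v}_{ik_i}]R_i$ for $i\in H$, so $\det(X_I)=\det(V_I)\prod_{i\in H}\det(R_i)$, where $V_I$ is the analogous matrix built from the fixed bases $\vec{v}_{ij}$. Because $|\det(R_i)|=1$ for each $R_i\in\mathbf{O}(k_i)$ and $V_I$ is invertible (as $H$ is admissible), the quantity $c_H:=\det(V_I)^{2}$ is a strictly positive constant independent of $\vec{t}$ and of the rotations. This produces
$$\det(M)\ \ge\ \sum_{H\text{ admissible}}c_H\exp\Big(\sum_{i\in H}\sum_{j=1}^{k_i}t_{ij}\Big).$$

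For the linear part, I would use that $\vec{p}$ lies in the convex hull of the admissible basis vectors to write $\vec{p}=\sum_{H}\lambda_H\vec{1}_H$ with finitely many $\lambda_H\ge0$ summing to $1$ (by Carath\'eodory). Setting $T_i:=\sum_{j}t_{ij}$ and using the definition of $\vec{\gamma}$, we get $\ip{\vec{\gamma}}{\vec{t}}=\sum_i p_iT_i=\sum_H\lambda_H\sum_{i\in H}T_i$, so weighted AM--GM gives
$$e^{\ip{\vec{\gamma}}{\vec{t}}}=\prod_{H}\Big(e^{\sum_{i\in H}T_i}\Big)^{\lambda_H}\le\sum_{H}\lambda_H\,e^{\sum_{i\in H}T_i}.$$
Dividing by the lower bound on $\det(M)$ yields $f\le\ln\max_{H:\lambda_H>0}(\lambda_H/c_H)$, a finite constant $C$. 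The only delicate point is the invariance step: verifying that the orthogonal action of the $R_i$ leaves the Cauchy--Binet coefficients $\det(X_I)^{2}$ unchanged on good basis sets. Once that is in hand, the argument collapses onto Barthe's one-dimensional proof applied to the aggregated variables $T_i$, and boundedness follows.
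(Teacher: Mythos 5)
Your proof is correct, and its skeleton (Cauchy--Binet, discarding the non-good terms, weighted AM--GM against the convex decomposition of $\vec{p}$) is the same as the paper's. The one place where you genuinely diverge is in how the dependence on $R_1,\ldots,R_n$ is eliminated, and your route is cleaner. The paper stops at the bound $f\leq\sum_{I:\mu_I\neq0}\mu_I\ln\bigl(\mu_I/\det(L_I)^2\bigr)$, whose right-hand side still depends on the rotations through $L_I$, and then invokes compactness of $\mathbf{O}(k_1)\times\cdots\times\mathbf{O}(k_n)$ together with continuity and non-vanishing of $\det(L_I)$ on good basis sets to conclude that this function has a finite maximum. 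You instead observe that for a good basis set $I$ coming from an admissible $H$, the submatrix $X_I$ is exactly $V_I$ times the block-diagonal matrix $\diag(R_i)_{i\in H}$ --- this works precisely because a good basis set contains \emph{all} indices $(i,j)$, $j\in[k_i]$, for each $i\in H$, so each rotation acts on a full block of columns --- whence $\det(X_I)^2=\det(V_I)^2=c_H>0$ is independent of the rotations. The ``delicate point'' you flag is therefore already settled by the factorization you wrote down. This buys you an explicit, rotation-free constant $C=\ln\max_{H:\lambda_H>0}(\lambda_H/c_H)$ with no compactness argument; the final ratio bound $\bigl(\sum_H\lambda_H a_H\bigr)/\bigl(\sum_H c_H a_H\bigr)\leq\max_H(\lambda_H/c_H)$ is a valid mediant-type inequality since all $c_H$ and $a_H=e^{\sum_{i\in H}T_i}$ are positive. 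Both arguments are sound; yours is slightly more informative, the paper's requires checking one fewer algebraic identity.
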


\begin{proof}
In this proof, we use $\mathcal{F}=\binom{[m]}{\ell}$ to denote the family of all $\ell$-subsets of $[m]$. For a set $I\subseteq[m]$, let $\vec{1}_I\in\{0,1\}^m$ be the indicator vector of $I$, i.e. the $i$-th entry is $1$ iff $i\in I$. By the definition of the vector $\vec{\gamma}$, we can pick $\mu_I\in[0,1]$, $\sum_{I\in\mathcal{F}}\mu_I=1$ so that
$$\vec{\gamma}=\sum_{I\in\mathcal{F}}\mu_I\vec{1}_I,$$
and $\mu_I\neq0$ only when $I$ is a good basis set.

In the proof, we will use the Cauchy-Binet formula which states that for a $\ell\times m$ matrix $A$ and an $m\times\ell$ matrix $B$,
\begin{equation} \label{eqn:cauchybinet}
\det(AB)=\sum_{I\in\mathcal{F}}\det(A_I)\det(B_I),
\end{equation}
where $A_I$ denotes the $\ell\times\ell$ matrix that consists of the subset of $A$'s columns with indices in $I$, and $B_I$ denotes the $\ell\times\ell$ matrix that consists of the subset of $B$'s rows with indices in $I$.

We use $t_I$ to denote the sum of the entries in $\vec{t}$ with indices in $I$, and $L_I$ be the $\ell\times\ell$ submatrix of $[\vec{x}_{11},\ldots,\ldots,\vec{x}_{nk_n}]$ containing only the columns with indices in $I$. We then have
\begin{equation} \label{eqn:gammat}
\langle\vec{\gamma},\vec{t}\rangle=\Big\langle\sum_I\mu_I\vec{1}_I,\vec{t}\Big\rangle=\sum_I\mu_It_I.
\end{equation}

Using equations~(\ref{eqn:cauchybinet}) and~(\ref{eqn:gammat}),
\begin{equation*}\begin{split}
\det\left(\sum_{i\in[n],j\in[k_i]}e^{t_{ij}}\vec{x}_{ij}\vec{x}_{ij}^T\right) & = \det\left(\Big[\vec{x}_{11},\ldots,\vec{x}_{nk_n}\Big]\left[\begin{array}{c}e^{t_{11}}\vec{x}_{11}^T \\ \vdots \\ e^{t_{nk_n}}\vec{x}_{nk_n}^T\end{array}\right]\right) \\
& = \sum_{I\in\mathcal{F}} e^{t_I}\det(L_I)\cdot\det(L_I^T)\hspace{2.01cm}\text{(By~(\ref{eqn:cauchybinet}))} \\
& = \sum_{I\in\mathcal{F}:\mu_I\neq0}\mu_I\left(\frac{e^{t_I}}{\mu_I}\right)\det(L_I)^2+\sum_{I\in\mathcal{F}:\mu_I=0}e^{t_I}\det(L_I)^2 \\
& \geq \prod_{I\in\mathcal{F}:\mu_I\neq0}\left(\frac{e^{t_I}\det(L_I)^2}{\mu_I}\right)^{\mu_I}+0\hspace{0.99cm}\text{(AM-GM inequality)}\\
& = e^{\langle\vec{\gamma},\vec{t}\rangle}\cdot\prod_{I\in\mathcal{F}:\mu_I\neq0}\left(\frac{\det(L_I)^2}{\mu_I}\right)^{\mu_I}\hspace{1cm}\text{(By~(\ref{eqn:gammat}))}.
\end{split}\end{equation*}
Take the logarithm of both sides,
$$f(\vec{t},R_1,\ldots,R_n)=\langle\vec{\gamma},\vec{t}\rangle-\ln\det\left(\sum_{i\in[n],j\in[k_i]}e^{t_{ij}}\vec{x}_{ij}\vec{x}_{ij}^T\right)\leq\sum_{I:\mu_I\neq0}\mu_I\ln\left(\frac{\mu_I}{\det(L_I)^2}\right).$$
The right side is a function of the orthogonal matrices $R_1,R_2,\ldots,R_n$ because $L_I$ is a function of them. We use $\widetilde{f}(R_1,R_2,\ldots,R_n)$ to denote the right side of the above inequality. For $\mu_I\neq0$, $I$ must be a good basis set. Hence $\det(L_I)\neq0$ no matter what the orthogonal matrices $R_1,R_2,\ldots,R_n$ are, and $\widetilde{f}$ is a well-defined continuous function. Since $\widetilde{f}$ is defined on the compact set $\mathbf{O}(k_1)\times \mathbf{O}(k_2)\times\cdots\times \mathbf{O}(k_n)$, it must have a finite upper bound. And that is also an upper bound for the function $f$.
\end{proof}

\subsection{Finding a point with small derivatives} \label{sec:optimumpt}

We first define some notations. Let
$$X=\sum\limits_{i\in[n],j\in[k_i]} e^{t_{ij}}\vec{x}_{ij}\vec{x}_{ij}^T$$
be a matrix valued function of $\vec{t},R_1,R_2,\ldots,R_n$. Then
$$f(\vec{t},R_1,\ldots,R_n)=\langle\vec{\gamma},\vec{t}\rangle-\ln\det(X).$$
Note that $X$ is always a positive definite matrix, since for any $\vec{w}\neq\vec{0}$,
$$\vec{w}^TX\vec{w}=\sum_{i\in[n],j\in[k_i]}e^{t_{ij}}\langle\vec{x}_{ij},\vec{w}\rangle^2>0,$$
when $\vec{x}_{11},\ldots,\ldots,\vec{x}_{nk_n}$ span the entire space (implied by $V_1+V_2+\cdots+V_n=\R^\ell$). Define $M$ to be the $\ell\times\ell$ full rank matrix satisfying $M^TM=X^{-1}$. We note that $M$ is also a function of $\vec{t},R_1,R_2,\ldots,R_n$.

In a later part of the proof we will show that the linear map defined by $M$ satisfies the requirement in Theorem~\ref{thm:barthe} when $\vec{t}$, $R_1,R_2,\ldots,R_n$ take appropriate values. We first find an appropriate value of $(R_1,R_2,\ldots,R_n)=(R_1^*(\vec{t}),R_2^*(\vec{t}),\ldots,R_n^*(\vec{t}))$ for every $\vec{t}\in\R^m$, and then find some $\vec{t}^*$ with specific properties.

\begin{lemma} \label{lem:optr}
For every $\vec{t}\in\R^m$, there exists $\big(R_1^*(\vec{t}),R_2^*(\vec{t}),\ldots,R_n^*(\vec{t})\big)$ satisfying
\begin{enumerate}
\item $f\big(\vec{t},R_1^*(\vec{t}),R_2^*(\vec{t}),\ldots,R_n^*(\vec{t})\big)=\max_{R_1,R_2,\ldots,R_n}\big\{f(\vec{t},R_1,R_2,\ldots,R_n)\big\}$.
\item For every $i\in[n]$, if $t_{ij}=t_{ij'}$ for some $j\neq j'\in[k_i]$, then
$$\langle M\vec{x}_{ij},M\vec{x}_{ij'}\rangle=0,$$
where $[\vec{x}_{i1},\ldots,\vec{x}_{ik_i}]=[\vec{v}_{i1},\ldots,\vec{v}_{ik_i}]R_i^*(\vec{t})$.
\end{enumerate}
\end{lemma}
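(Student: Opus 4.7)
The plan is to obtain Condition (1) via compactness and then use a ``free direction'' in $f$ --- namely, rotations that mix indices with a common value of $t_{ij}$ --- to further adjust the maximizer into a form where Condition (2) holds. Fix $\vec{t}\in\R^m$. Since $f(\vec{t},\cdot)$ is continuous and the product group $\mathbf{O}(k_1)\times\mathbf{O}(k_2)\times\cdots\times\mathbf{O}(k_n)$ is compact, a maximizer $(\tilde R_1,\ldots,\tilde R_n)$ exists. I will modify this tuple into the desired $(R_1^*(\vec{t}),\ldots,R_n^*(\vec{t}))$ without decreasing the value of $f$.

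The key invariance is the following. For each $i\in[n]$, partition $[k_i]$ into \emph{level sets} $G_{i,1},G_{i,2},\ldots$, grouping together indices sharing the same value of $t_{ij}$. Writing $Y_{i,s}$ for the matrix with columns $\{\vec{x}_{ij}\}_{j\in G_{i,s}}$ and $t_s$ for the common $t$-value on $G_{i,s}$, the contribution of $G_{i,s}$ to $X$ is
$$\sum_{j\in G_{i,s}}e^{t_{ij}}\vec{x}_{ij}\vec{x}_{ij}^T=e^{t_s}\,Y_{i,s}Y_{i,s}^T.$$
Replacing $Y_{i,s}$ by $Y_{i,s}Q_{i,s}$ for any orthogonal $Q_{i,s}$ leaves $Y_{i,s}Y_{i,s}^T$ unchanged, hence leaves $X$ and $f$ unchanged. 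In terms of $R_i$, this amounts to right-multiplying $\tilde R_i$ by the block-diagonal orthogonal matrix $Q_i:=\bigoplus_sQ_{i,s}$, where the direct sum is taken with respect to the level-set partition of $[k_i]$.

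Now I exploit this freedom to diagonalize. Let $V_i=[\vec{v}_{i1},\ldots,\vec{v}_{ik_i}]$ and consider the symmetric positive definite matrix $B_i=\tilde R_i^TV_i^TX^{-1}V_i\tilde R_i$, whose $(j,j')$-entry is exactly $\vec{x}_{ij}^TX^{-1}\vec{x}_{ij'}=\langle M\vec{x}_{ij},M\vec{x}_{ij'}\rangle$ evaluated at $\tilde R_i$. For each level set $G_{i,s}$, choose an orthogonal $Q_{i,s}$ that diagonalizes the principal sub-block $B_i^{(s)}$ of $B_i$ indexed by $G_{i,s}\times G_{i,s}$, set $Q_i:=\bigoplus_sQ_{i,s}$, and define $R_i^*(\vec{t}):=\tilde R_iQ_i$. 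By the invariance argument, $X$ and $f$ are unchanged, so $R_i^*$ still attains the maximum and Condition (1) holds. On the other hand, the new Gram matrix is
$$(R_i^*)^TV_i^TX^{-1}V_iR_i^*=Q_i^TB_iQ_i,$$
whose restriction to each $G_{i,s}\times G_{i,s}$ block is $Q_{i,s}^TB_i^{(s)}Q_{i,s}$, i.e.\ diagonal by construction. Hence $\langle M\vec{x}_{ij},M\vec{x}_{ij'}\rangle=0$ for every $j\neq j'$ with $t_{ij}=t_{ij'}$, which is exactly Condition (2).

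I do not expect a serious obstacle here; the only conceptual point is to recognize that the directions along which $f$ is genuinely constant (rotations within level sets) are precisely the ones available to kill the off-diagonal Gram entries that the first-order optimality condition leaves untouched. Indeed, differentiating $\ln\det X$ along a planar rotation in coordinates $(j,j')$ within block $i$ yields $2(e^{t_{ij}}-e^{t_{ij'}})\langle M\vec{x}_{ij},M\vec{x}_{ij'}\rangle$, so vanishing of the derivative at the maximum automatically gives orthogonality when $t_{ij}\neq t_{ij'}$, but says nothing when $t_{ij}=t_{ij'}$ --- and this is precisely the case the invariance-plus-diagonalization step handles.
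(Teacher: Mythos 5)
Your proof is correct and takes essentially the same approach as the paper: establish existence of a maximizer by compactness of $\mathbf{O}(k_1)\times\cdots\times\mathbf{O}(k_n)$, then exploit the invariance of $X$ (hence of $f$ and $M$) under block-orthogonal rotations within each level set of equal $t$-values to orthogonalize the relevant vectors. The only cosmetic difference is that you diagonalize the Gram sub-blocks $(ML_{J_r})^T(ML_{J_r})$ by eigendecomposition, whereas the paper chooses $Q_r$ from the SVD of $ML_{J_r}$; these are the same orthogonal matrix.
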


\begin{proof}
The first condition can be satisfied by the compactness of $\mathbf{O}(k_1)\times \mathbf{O}(k_2)\times\cdots\times \mathbf{O}(k_n)$. We will show how to change $(R_1^*(\vec{t}),R_2^*(\vec{t}),\ldots,R_n^*(\vec{t}))$, which already satisfies the first condition, so that it satisfies the second condition while preserving the first condition.

Fix an $i\in[n]$ and partition the indices of $(t_{i1},t_{i2},\ldots,t_{ik_i})$ into equivalence classes $J_1,J_2,\ldots,J_b\subseteq[k_i]$ such that for $j,j'$ in the same class $t_{ij}=t_{ij'}$ and for $j,j'$ in different classes $t_{ij}\neq t_{ij'}$. We use $t_{J_r}$ to denote the value of $t_{ij}$ for $j\in J_r$, and $L_{J_r}$ to denote the matrix consisting of all columns $\vec{x}_{ij}$ with $j\in J_r$. The terms in $X$ that depend on $R_i$ are
$$\sum_{r\in[b]}\left(e^{t_{J_r}}\sum_{j\in J_r}\vec{x}_{ij}\vec{x}_{ij}^T\right)=\sum_{r\in[b]}\left(e^{t_{J_r}}\cdot L_{J_r}L_{J_r}^T\right)=\sum_{r\in[b]}\left(e^{t_{J_r}}\cdot L_{J_r}Q_rQ_r^TL_{J_r}^T\right),$$
where $Q_r$ can be taken to be any $|J_r|\times|J_r|$ orthogonal matrix. This means that if we change $R_i^*(\vec{t})$ to $R_i^*(\vec{t})\diag(Q_1,\ldots,Q_b)$ (here $\diag(Q_1,\ldots,Q_b)$ denotes the matrix in which the submatrix with row and column indices $J_r$ is $Q_r$), or equivalently change $L_{J_r}$ to $L_{J_r}Q_r$ for every $r\in[b]$, the matrix $X$ does not change, hence $M$ and $f$ do not change, and the first condition is preserved as $f$ is still the maximum for the fixed $\vec{t}$.

For every $r\in[b]$, we can find a $Q_r$ such that the columns of $ML_{J_r}Q_r$ are orthogonal (consider the singular value decomposition of $ML_{J_r}$). Change $R_i^*(\vec{t})$ to $R_i^*(\vec{t})\diag(Q_1,\ldots,Q_b)$ and the second condition is satisfied while preserving the first condition. Doing this for every $i$ we can obtain an $(R_1^*(\vec{t}),R_2^*(\vec{t}),\ldots,R_n^*(\vec{t}))$ satisfying both conditions.
\end{proof}

From now on we use $R_1^*(\vec{t}),R_2^*(\vec{t}),\ldots,R_n^*(\vec{t})$ to denote the matrices satisfying the conditions in Lemma~\ref{lem:optr}.

\begin{lemma} \label{lem:derivative}
For any $\varepsilon>0$, there exists $\vec{t}^*\in\R^m$ such that for every $i\in[n],j\in[k_i]$.
$$\left|\frac{\partial f}{\partial t_{ij}}\Big(\vec{t}^*,R_1^*(\vec{t}^*),R_2^*(\vec{t}^*),\ldots,R_n^*(\vec{t}^*)\Big)\right|\leq\varepsilon.$$
\end{lemma}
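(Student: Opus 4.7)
The plan is to realize $\vec{t}^*$ as an approximate critical point of the upper envelope
$$g(\vec{t})\;=\;\max_{R_1,\ldots,R_n}f(\vec{t},R_1,\ldots,R_n)\;=\;f\bigl(\vec{t},R_1^*(\vec{t}),\ldots,R_n^*(\vec{t})\bigr),$$
and then to deduce the derivative bound from an envelope inequality. Since $f$ is jointly continuous in $(\vec{t},R_1,\ldots,R_n)$ and the orthogonal groups $\mathbf{O}(k_i)$ are compact, $g$ is continuous on $\R^m$. By Lemma~\ref{lem:bounded}, $g$ is bounded above, so $S:=\sup_{\vec{t}\in\R^m}g(\vec{t})$ is finite.

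First I would apply Ekeland's variational principle to the lower semi-continuous function $-g$ on the complete metric space $\R^m$ (equipped with the Euclidean norm). For any fixed $\varepsilon>0$, Ekeland's principle produces a point $\vec{t}^*\in\R^m$ such that $g(\vec{t}^*)\geq S-\varepsilon$ and
$$g(\vec{t})\;\leq\;g(\vec{t}^*)+\varepsilon\,\|\vec{t}-\vec{t}^*\|\qquad\text{for all }\vec{t}\in\R^m.$$
Next I would fix the orthogonal matrices $R^*_i:=R^*_i(\vec{t}^*)$ and compare the ``frozen'' function $f(\cdot,R^*_1,\ldots,R^*_n)$ to $g$. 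By definition of the envelope, $f(\vec{t},R^*_1,\ldots,R^*_n)\leq g(\vec{t})$ for every $\vec{t}$, while equality holds at $\vec{t}^*$ by the first condition of Lemma~\ref{lem:optr}. Subtracting the value at $\vec{t}^*$ and combining with Ekeland's inequality yields
$$f(\vec{t},R^*_1,\ldots,R^*_n)-f(\vec{t}^*,R^*_1,\ldots,R^*_n)\;\leq\;\varepsilon\,\|\vec{t}-\vec{t}^*\|\qquad\text{for all }\vec{t}\in\R^m.$$

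Finally, I would exploit the smoothness of $f$ in $\vec{t}$ (with $R_1,\ldots,R_n$ held fixed, $f$ is a smooth function of $\vec{t}$) to convert the Lipschitz-type envelope bound into a bound on the gradient. Given any unit direction $\vec{u}\in\R^m$, plug $\vec{t}=\vec{t}^*+s\vec{u}$ into the inequality above, divide by $s>0$, and let $s\to0^+$; this gives $\langle\nabla_{\vec{t}}f(\vec{t}^*,R^*_1,\ldots,R^*_n),\vec{u}\rangle\leq\varepsilon$. Repeating the argument with $-\vec{u}$ gives the reverse inequality, so $\|\nabla_{\vec{t}}f(\vec{t}^*,R^*_1,\ldots,R^*_n)\|\leq\varepsilon$, and in particular every individual partial derivative $|\partial f/\partial t_{ij}|$ at $(\vec{t}^*,R^*_1(\vec{t}^*),\ldots,R^*_n(\vec{t}^*))$ is at most $\varepsilon$, completing the proof.

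The main conceptual obstacle is that $g$ need not be differentiable (the optimizer $R^*(\vec{t})$ may jump discontinuously), so one cannot simply look for a genuine critical point of $g$ or of $f$. Ekeland's principle circumvents this by providing a point at which the envelope grows at most linearly in any direction, after which the envelope property of $f(\cdot,R^*)\leq g(\cdot)$ transfers this weak critical-point behavior from the nonsmooth $g$ back to the smooth section $f(\cdot,R^*_1(\vec{t}^*),\ldots,R^*_n(\vec{t}^*))$, where it can be translated into a genuine gradient bound.
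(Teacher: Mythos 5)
Your proof is correct, and it takes a genuinely different route from the paper. The paper does not invoke any named variational principle; instead it proves (as Lemma~\ref{lem:smallderiv}) a self-contained, elementary statement by contradiction. Assuming every point has a partial derivative of magnitude $>\varepsilon$, the paper defines the sublevel-type set $\mathcal{G}=\{(\vec{x},y):f(\vec{x},y)\geq f^*(\vec{0})+0.9\varepsilon\|\vec{x}\|\}$, shows it is nonempty, bounded, and closed (hence compact), extracts a point of maximal norm $Z$, maximizes $f$ over the sphere $\|\vec{x}\|=Z$, and then uses the large-derivative hypothesis at that maximizer to produce a further point $\vec{x}_2$ that either lies in the open ball (forcing an interior maximum with vanishing gradient, contradiction) or lies in $\mathcal{G}$ with $\|\vec{x}_2\|>Z$ (contradicting maximality of $Z$). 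Your proof packages exactly this kind of compactness argument into a single invocation of Ekeland's variational principle applied to the continuous upper envelope $g(\vec{t})=\max_{R_1,\ldots,R_n}f$, then transfers the $\varepsilon$-slope estimate on $g$ to the smooth frozen section $f(\cdot,R^*(\vec{t}^*))$ via the envelope inequality $f(\cdot,R^*(\vec{t}^*))\leq g(\cdot)$ with equality at $\vec{t}^*$. This is cleaner and shorter, at the cost of relying on Ekeland's principle as a black box; the paper's version is longer but fully self-contained and uses nothing beyond basic compactness and continuity. Both approaches require the same two structural facts: that the envelope $g$ is bounded above (Lemma~\ref{lem:bounded}) and that each frozen section $f(\cdot,R_1,\ldots,R_n)$ is differentiable.
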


This lemma follows immediately from the following more general lemma.

\begin{lemma} \label{lem:smallderiv}
Let $\mathcal{A}\subseteq\R^{h}$ ($h\in\Z^+$) be a compact set. Let $f:\R^m\times\mathcal{A}\mapsto\R$ and $y^*:\R^m\mapsto\mathcal{A}$ be functions satisfying the following properties:
\begin{enumerate}
\item $f(\vec{x},y)$ is bounded and continuous on $\R^m\times\mathcal{A}$.
\item For every $\vec{x}\in\R^m$, $f(\vec{x},y^*(\vec{x}))=\max_{y\in\mathcal{A}}\{f(\vec{x},y)\}$.
\item For every fixed $y\in\mathcal{A}$, $f(\vec{x},y)$ as a function of $\vec{x}$ is differentiable on $\R^m$.
\end{enumerate}
Then, for every $\varepsilon>0$, there exists an $\vec{x}^*\in\R^m$ such that for every $i\in[m]$,
$$\left|\frac{\partial f}{\partial x_i}\Big(\vec{x}^*,y^*(\vec{x}^*)\Big)\right|\leq\varepsilon.$$
\end{lemma}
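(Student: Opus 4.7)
The strategy is to reduce the lemma to Ekeland's variational principle applied to the \emph{envelope function}
$$g(\vec{x}) \,:=\, f(\vec{x},y^*(\vec{x})) \,=\, \max_{y\in\mathcal{A}} f(\vec{x},y).$$
By continuity of $f$ and compactness of $\mathcal{A}$, the function $g$ is continuous on $\R^m$, and hypothesis (1) makes it bounded. The key obstruction is that, although $g$ is defined on all of $\R^m$, there is no reason for $\sup g$ to be attained, and $g$ is typically only Lipschitz (not differentiable) at a near-maximiser, so a naive ``take the maximum and set its gradient to zero'' argument cannot work. Ekeland's variational principle is the precise tool that circumvents both issues.

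Concretely, I would first apply Ekeland's principle to $-g$ on the complete metric space $(\R^m,\|\cdot\|_2)$: after choosing any $\vec{x}_0$ with $g(\vec{x}_0)\geq\sup g - \varepsilon$, Ekeland's theorem produces a point $\vec{x}^*\in\R^m$ satisfying
$$g(\vec{x}) \,\leq\, g(\vec{x}^*) + \varepsilon\,\|\vec{x}-\vec{x}^*\|_2 \qquad \text{for every }\vec{x}\in\R^m.$$
Next I would exploit the envelope structure: the specific element $y^*(\vec{x}^*)\in\mathcal{A}$ is fixed, so for every $\vec{x}$ we have $g(\vec{x})\geq f(\vec{x},y^*(\vec{x}^*))$, while hypothesis (2) gives $g(\vec{x}^*)=f(\vec{x}^*,y^*(\vec{x}^*))$. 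Subtracting these identities yields
$$f(\vec{x},y^*(\vec{x}^*)) - f(\vec{x}^*,y^*(\vec{x}^*)) \,\leq\, \varepsilon\,\|\vec{x}-\vec{x}^*\|_2 \qquad \text{for every }\vec{x}\in\R^m.$$

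Finally, for each coordinate $i\in[m]$ I would substitute $\vec{x}=\vec{x}^*+s\vec{e}_i$ (so $\|\vec{x}-\vec{x}^*\|_2=|s|$) and invoke hypothesis (3), namely differentiability of $f(\cdot,y^*(\vec{x}^*))$ at $\vec{x}^*$. Dividing the inequality by $s$ and letting $s\to 0^+$ gives $\partial f/\partial x_i\,(\vec{x}^*,y^*(\vec{x}^*))\leq\varepsilon$; letting $s\to 0^-$ produces the matching lower bound $\geq -\varepsilon$. Together these yield $|\partial f/\partial x_i\,(\vec{x}^*,y^*(\vec{x}^*))|\leq\varepsilon$ for all $i$, which is the conclusion.

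The main obstacle is purely conceptual: one needs to find a replacement for the notion of ``critical point of $g$'' when $g$ is not differentiable and not attained. Once one recognises that Ekeland's principle supplies exactly such a replacement (a point with a Lipschitz-type one-sided upper bound on $g$), the transfer from the envelope $g$ to the fixed-slice $f(\cdot,y^*(\vec{x}^*))$ is automatic, because inequalities going \emph{up} from $f(\cdot,y^*(\vec{x}^*))$ to $g$ and \emph{sideways} from $g(\vec{x}^*)$ back to $f(\vec{x}^*,y^*(\vec{x}^*))$ are both equalities at $\vec{x}=\vec{x}^*$, so the first-order information survives unchanged. If one wished to avoid citing Ekeland, its short standard proof (iterated greedy perturbation in a metric-penalised potential) can be inlined with no essential change.
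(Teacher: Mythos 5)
Your proof is correct, and it takes a genuinely different route from the paper's. The paper argues by contradiction: assuming every point has a partial derivative of magnitude greater than $\varepsilon$, it builds the set $\mathcal{G}=\{(\vec{x},y): f(\vec{x},y)\geq f^*(\vec{0})+0.9\varepsilon\|\vec{x}\|\}$, shows $\mathcal{G}$ is compact, maximizes $f$ over the sphere of the largest norm attained in $\mathcal{G}$, and derives a contradiction by a two-case analysis --- in effect re-proving, in a self-contained but somewhat intricate way, the existence of an approximate critical point of the bounded envelope $g(\vec{x})=\max_{y}f(\vec{x},y)$. You instead recognize this as exactly the conclusion of Ekeland's variational principle applied to $-g$ (which is continuous, hence lower semicontinuous, and bounded below, since $\mathcal{A}$ is compact and $f$ is jointly continuous and bounded), and then transfer the resulting one-sided Lipschitz bound $g(\vec{x})\leq g(\vec{x}^*)+\varepsilon\|\vec{x}-\vec{x}^*\|$ to the differentiable slice $f(\cdot,y^*(\vec{x}^*))$ via the standard envelope inequalities $f(\vec{x},y^*(\vec{x}^*))\leq g(\vec{x})$ and $f(\vec{x}^*,y^*(\vec{x}^*))=g(\vec{x}^*)$; the two-sided difference quotient then gives $|\partial f/\partial x_i|\leq\varepsilon$. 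All steps check out (in particular, one should quote Ekeland in its $(\varepsilon,\lambda)$ form with $\lambda=1$, or the weak form with penalty constant $\varepsilon$, to get the stated inequality). Your route is shorter and more conceptual, at the cost of invoking an external theorem; the paper's route is elementary and self-contained but longer. Either is acceptable, and as you note, inlining the short greedy-iteration proof of Ekeland would make your argument self-contained as well.
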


\begin{proof}
We denote by $f^*(\vec{x})=f\big(\vec{x},y^*(\vec{x})\big)$. For the sake of contradiction, assume that for any $\vec{x}\in\R^m$, there is an index $i\in[m]$ such that
\begin{equation} \label{eqn:bigderivative}
\left|\frac{\partial f}{\partial x_i}\Big(\vec{x},y^*(\vec{x})\Big)\right|>\varepsilon.
\end{equation}
In particular, there is a derivative greater than $\varepsilon$ at $\vec{x}=\vec{0}$. Therefore there exists an $\vec{x}_0\neq\vec{0}$ such that
$$f^*(\vec{x}_0)-f^*(\vec{0})\geq f\big(\vec{x}_0,y^*(\vec{0})\big)-f\big(\vec{0},y^*(\vec{0})\big)\geq0.9\varepsilon\cdot\|\vec{x}_0-\vec{0}\|=0.9\varepsilon\cdot\|\vec{x}_0\|.$$
Define
$$g(\vec{x},y)=f(\vec{x},y)-f^*(\vec{0})-0.9\varepsilon\cdot\|\vec{x}\|,$$
and
$$\mathcal{G}=\big\{(\vec{x},y)\in\R^m\times\mathcal{A}: g(\vec{x},y)\geq0\big\}=g^{-1}\big([0,+\infty)\big).$$
We can see $\mathcal{G}\neq\emptyset$ by $\vec{x}_0\in\mathcal{G}$. By Property~1, $f(\vec{x},y)$ is bounded and any $(\vec{x},y)$ with sufficiently large $\|\vec{x}\|$ cannot be in $\mathcal{G}$. Hence $\mathcal{G}$ is bounded. Since $g(\vec{x},y)$ is a continuous function by Property~2, the set $\mathcal{G}=g^{-1}\big([0,+\infty)\big)$ must be closed. Therefore $\mathcal{G}$ is compact. Thus we can find
$$Z=\max_{(\vec{x},y)\in\mathcal{G}}\big\{\|\vec{x}\|\big\}.$$
Pick $(\vec{x}_1,y_1)\in\mathcal{G}$ with $\|\vec{x}_1\|=Z$. The point $(\vec{x}_1,y_1)$ is in the compact set
$$\mathcal{B}_Z=\big\{\vec{x}\in\R^m:\|\vec{x}\|=Z\big\}\times\mathcal{A}.$$
Let $\big(\vec{x}_1^*,y^*(\vec{x}_1^*)\big)\in\mathcal{B}_Z$ be any point where $f$ is maximized over $\mathcal{B}_Z$. By $(\vec{x}_1,y_1)\in\mathcal{G}$, we have
\begin{equation} \label{eqn:xpping}
f^*(\vec{x}_1^*)-f^*(\vec{0})\geq f(\vec{x}_1,y_1)-f^*(\vec{0})\geq0.9\varepsilon\cdot\|\vec{x}_1\|=0.9\varepsilon\cdot\|\vec{x}_1^*\|.
\end{equation}
By~(\ref{eqn:bigderivative}), there must be an $\vec{x}_2\neq\vec{x}_1^*$ such that
\begin{equation} \label{eqn:xpppdef}
f^*(\vec{x}_2)-f^*(\vec{x}_1^*)\geq f\big(\vec{x}_2,y^*(\vec{x}_1^*)\big)-f\big(\vec{x}_1^*,y^*(\vec{x}_1^*)\big)\geq0.9\varepsilon\cdot\|\vec{x}_2-\vec{x}_1^*\|.
\end{equation}
Note that $f^*(\vec{x}_2)$ is strictly greater than $f^*(\vec{x}_1^*)$. By the maximality of $f\big(\vec{x}_1^*,y^*(\vec{x}_1^*)\big)$ on $\mathcal{B}_Z$, we can see $\|\vec{x}_2\|\neq Z$. There are two cases:
\begin{enumerate}
\item $\|\vec{x}_2\|<Z$. This implies that the maximum value of $f$ over
$$\mathcal{B}_{\leq Z}=\big\{\vec{x}\in\R^m:\|\vec{x}\|\leq Z\big\}\times\mathcal{A}$$
is at least $f^*(\vec{x}_2)>f^*(\vec{x}_1^*)$. Say $f$ archives the maximum value over $\mathcal{B}_{\leq Z}$ at $(\vec{x},y)=\big(\vec{x}_3,y^*(\vec{x}_3)\big)$. Then we have $\|\vec{x}_3\|<Z$ by the maximality of $f\big(\vec{x}_1^*,y^*(\vec{x}_1^*)\big)$ on $\mathcal{B}_Z$. And $\vec{x}=\vec{x}_3$ must be a local maximum of $f\big(\vec{x},y^*(\vec{x}_3)\big)$ with $y=y^*(\vec{x}_3)$ fixed. Therefore
$$\frac{\partial f}{\partial x_i}\Big(\vec{x}_3,y^*(\vec{x}_3)\Big)=0\quad\forall i\in[m],$$
violating~(\ref{eqn:bigderivative}).
\item $\|\vec{x}_2\|>Z$. By~(\ref{eqn:xpping}) and~(\ref{eqn:xpppdef}), we have
\begin{equation*}\begin{split}
f^*(\vec{x}_2)-f^*(\vec{0}) & =f^*(\vec{x}_2)-f^*(\vec{x}_1^*)+f^*(\vec{x}_1^*)-f^*(\vec{0}) \\
&\geq0.9\varepsilon\cdot\|\vec{x}_2-\vec{x}_1^*\|+0.9\varepsilon\cdot\|\vec{x}_1^*\| \\
&\geq0.9\varepsilon\cdot\|\vec{x}_2\|.
\end{split}\end{equation*}
Therefore $\big(\vec{x}_2,y^*(\vec{x}_2)\big)\in\mathcal{G}$. By the definition of $Z$, there should be $\|\vec{x}_2\|\leq Z$, contradiction.
\end{enumerate}
Thus the lemma is proved.
\end{proof}

\subsection{Proof of Theorem~\ref{thm:barthe}} \label{sec:barthepf}

We apply Lemma~\ref{lem:derivative} with $\varepsilon'=\varepsilon/m$ and obtain a $\vec{t}^*$. In the remaining proof we will use $X$, $M$ and $\vec{x}_{ij}$ ($i\in[n],j\in[k_i]$) to denote their values when $\vec{t}=\vec{t}^*$ and $R_i=R_i^*(\vec{t}^*)$ ($i\in[n]$).
\begin{lemma}
$\langle M\vec{x}_{ij},M\vec{x}_{ij'}\rangle=0$ for every $i\in[n]$ and $j\neq j'\in[k_i]$.
\end{lemma}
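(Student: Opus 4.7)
The plan is to read off the desired orthogonality from the first-order optimality of $R_i^*(\vec{t}^*)$ on the orthogonal group $\mathbf{O}(k_i)$. By property~1 of Lemma~\ref{lem:optr}, the matrix $R_i^*(\vec{t}^*)$ maximizes $f(\vec{t}^*,\cdot)$ over $R_i\in\mathbf{O}(k_i)$ (with the other $R_{i'}$'s fixed at their optima), so the derivative of $f$ along any smooth curve through $R_i^*(\vec{t}^*)$ inside $\mathbf{O}(k_i)$ must vanish. I will test this on the one-parameter family $R_i(\theta)=R_i^*(\vec{t}^*)\,G_{j,j'}(\theta)$, where $G_{j,j'}(\theta)\in\mathbf{O}(k_i)$ is the rotation by $\theta$ in the $(j,j')$-coordinate plane of $\R^{k_i}$. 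Under this perturbation only the two columns $\vec{x}_{ij}$ and $\vec{x}_{ij'}$ are affected; they are replaced by $\cos\theta\,\vec{x}_{ij}+\sin\theta\,\vec{x}_{ij'}$ and $-\sin\theta\,\vec{x}_{ij}+\cos\theta\,\vec{x}_{ij'}$ respectively.

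A short expansion of $X(\theta)$ and differentiation at $\theta=0$ gives
\[
\frac{dX}{d\theta}\bigg|_{\theta=0}=(e^{t^*_{ij}}-e^{t^*_{ij'}})\bigl(\vec{x}_{ij}\vec{x}_{ij'}^T+\vec{x}_{ij'}\vec{x}_{ij}^T\bigr).
\]
Combining Jacobi's formula $\tfrac{d}{d\theta}\ln\det X=\tr(X^{-1}\,dX/d\theta)$ with the identities $X^{-1}=M^TM$ and $\tr(X^{-1}\vec{x}_{ij}\vec{x}_{ij'}^T)=\vec{x}_{ij'}^T M^T M \vec{x}_{ij}=\langle M\vec{x}_{ij},M\vec{x}_{ij'}\rangle$, and noting that $\langle\vec{\gamma},\vec{t}\rangle$ does not depend on $R_i$, we obtain
\[
\frac{df}{d\theta}\bigg|_{\theta=0}=-2(e^{t^*_{ij}}-e^{t^*_{ij'}})\,\langle M\vec{x}_{ij},M\vec{x}_{ij'}\rangle.
\]
Maximality forces this quantity to vanish, giving the factored identity $(e^{t^*_{ij}}-e^{t^*_{ij'}})\langle M\vec{x}_{ij},M\vec{x}_{ij'}\rangle=0$.

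A simple case split finishes the argument. If $t^*_{ij}\neq t^*_{ij'}$, the exponential prefactor is nonzero and we may divide to conclude $\langle M\vec{x}_{ij},M\vec{x}_{ij'}\rangle=0$. If $t^*_{ij}=t^*_{ij'}$, the prefactor vanishes and this variational computation is uninformative, but orthogonality in that degenerate case is supplied directly by property~2 of Lemma~\ref{lem:optr}, which was designed precisely to handle equivalence classes of equal $t$-coordinates. There is no real obstacle here---the proof is essentially just reading off a first-order condition---but it is worth flagging that we only use the \emph{exact} maximality of $R_i^*(\vec{t}^*)$ in the $R_i$-direction from Lemma~\ref{lem:optr}; the approximate $\varepsilon$-stationarity of $\vec{t}^*$ from Lemma~\ref{lem:derivative} will enter only in the subsequent step that reconstructs the operator $\sum_i p_i\Proj_{M(V_i)}$.
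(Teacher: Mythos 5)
Your proof is correct and takes essentially the same route as the paper: both perturb $R_i^*(\vec{t}^*)$ by a planar rotation in $\mathbf{O}(k_i)$, apply Jacobi's formula and $X^{-1}=M^TM$ to obtain the derivative $\pm 2(e^{t^*_{ij}}-e^{t^*_{ij'}})\langle M\vec{x}_{ij},M\vec{x}_{ij'}\rangle$, and then split into the cases $t^*_{ij}\neq t^*_{ij'}$ (stationarity) and $t^*_{ij}=t^*_{ij'}$ (property~2 of Lemma~\ref{lem:optr}). The only cosmetic difference is the sign convention chosen for the rotation, which has no effect on the conclusion.
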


\begin{proof}
We fix $i_0\in[n], j_0\neq j_0'\in[k_{i_0}]$ and prove $\langle M\vec{x}_{i_0j_0},M\vec{x}_{i_0j_0'}\rangle=0$. If $t_{i_0j_0}^*=t_{i_0j_0'}^*$, this is guaranteed by Lemma~\ref{lem:optr}. We only consider the case that $t_{i_0j_0}^*\neq t_{i_0j_0'}^*$.

Let $\theta\in\R$ be a variable, and define $\vec{x}_{ij}'$ for $i\in[n]$, $j\in[k_i]$ as follows.
$$\vec{x}_{ij}'=\begin{cases}
\cos\theta\cdot\vec{x}_{i_0j_0}-\sin\theta\cdot\vec{x}_{i_0j_0'} & \quad(i,j)=(i_0,j_0), \\
\sin\theta\cdot\vec{x}_{i_0j_0}+\cos\theta\cdot\vec{x}_{i_0j_0'} & \quad(i,j)=(i_0,j_0'), \\
\vec{x}_{ij} & \quad\text{otherwise}.
\end{cases}$$
We consider the following function $h:\R\mapsto\R$,
$$h(\theta)=\langle\vec{\gamma},\vec{t}^*\rangle-\ln\det\left(\sum_{i\in[n],j\in[k_i]}e^{t_{ij}^*}\vec{x}_{ij}'{\vec{x}_{ij}'}^T\right).$$

\begin{claim}
$h(\theta)$ has a maximum at $\theta=0$.
\end{claim}

\begin{proof}
Let $R(\theta)$ be the $k_{i_0}\times k_{i_0}$ orthogonal matrix obtained from the identity matrix by changing the $(j_0,j_0)$, $(j_0',j_0')$ entries to $\cos\theta$, the $(j_0,j_0')$ entry to $\sin\theta$, and the $(j_0',j_0)$ entry to $-\sin\theta$. We can see $R(0)$ is the identity matrix and
$$[\vec{x}_{i_01}',\ldots,\vec{x}_{i_0k_{i_0}}']=[\vec{x}_{i_01},\ldots,\vec{x}_{i_0k_{i_0}}]R(\theta).$$
Therefore for all $\theta\in\R$.
\begin{equation*}\begin{split}
h(\theta) & = f\Big(\vec{t}^*,R_1^*(\vec{t}^*),\ldots,R_{i_0-1}^*(\vec{t}^*),R_{i_0}^*(\vec{t}^*)\cdot R(\theta),R_{i_0+1}^*(\vec{t}^*),\ldots,R_n^*(\vec{t}^*)\Big) \\
& \leq f\Big(\vec{t}^*,R_1^*(\vec{t}^*),\ldots,R_{i_0-1}^*(\vec{t}^*),R_{i_0}^*(\vec{t}^*),R_{i_0+1}^*(\vec{t}^*),\ldots,R_n^*(\vec{t}^*)\Big) \\
& = h(0).
\end{split}\end{equation*}
Thus the claim is proved.
\end{proof}

Using $\frac{d}{ds}\ln\det(A)=\tr(A^{-1}\frac{d}{ds}A)$ for invertible matrix $A$ (Theorem~4 in~\cite[Chapter~9]{Lax07}), we can calculate the derivative of $h$.
\begin{equation*}\begin{split}
\frac{d h}{d\theta}(0) = & -\tr\Big[X^{-1}\Big(e^{t_{i_0j_0}^*}\left.\frac{d}{d\theta}\right|_{\theta=0}\vec{x}_{i_0j_0}'{\vec{x}_{i_0j_0}'}^T+e^{t_{i_0j_0'}^*}\left.\frac{d}{d\theta}\right|_{\theta=0}\vec{x}_{i_0j_0'}'{\vec{x}_{i_0j_0'}'}^T\Big)\Big] \\
= & -\tr\Big[X^{-1}\Big(e^{t_{i_0j_0}^*}\left.\frac{d}{d\theta}\right|_{\theta=0}(\cos\theta\cdot\vec{x}_{i_0j_0}-\sin\theta\cdot\vec{x}_{i_0j_0'})(\cos\theta\cdot\vec{x}_{i_0j_0}-\sin\theta\cdot\vec{x}_{i_0j_0'})^T \\
& \hspace{1.4cm} +e^{t_{i_0j_0'}^*}\left.\frac{d}{d\theta}\right|_{\theta=0}(\sin\theta\cdot\vec{x}_{i_0j_0}+\cos\theta\cdot\vec{x}_{i_0j_0'})(\sin\theta\cdot\vec{x}_{i_0j_0}+\cos\theta\cdot\vec{x}_{i_0j_0'})^T\Big)\Big] \\
= & -e^{t_{i_0j_0}^*}\tr\Big[\left.\frac{d}{d\theta}\right|_{\theta=0}(\cos\theta\cdot M\vec{x}_{i_0j_0}-\sin\theta\cdot M\vec{x}_{i_0j_0'})(\cos\theta\cdot M\vec{x}_{i_0j_0}-\sin\theta\cdot M\vec{x}_{i_0j_0'})^T\Big] \\
& -e^{t_{i_0j_0'}^*}\tr\Big[\left.\frac{d}{d\theta}\right|_{\theta=0}(\sin\theta\cdot M\vec{x}_{i_0j_0}+\cos\theta\cdot M\vec{x}_{i_0j_0'})(\sin\theta\cdot M\vec{x}_{i_0j_0}+\cos\theta\cdot M\vec{x}_{i_0j_0'})^T\Big] \\
= & -e^{t_{i_0j_0}^*}\big[-2\cdot\langle M\vec{x}_{i_0j_0},M\vec{x}_{i_0j_0'}\rangle\big]-e^{t_{i_0j_0'}^*}\big[2\cdot\langle M\vec{x}_{i_0j_0},M\vec{x}_{i_0j_0'}\rangle\big] \\
= & 2(e^{t_{i_0j_0}^*}-e^{t_{i_0j_0'}^*})\cdot\langle M\vec{x}_{i_0j_0},M\vec{x}_{i_0j_0'}\rangle.
\end{split}\end{equation*}
Since $h(0)$ is the maximum, we have $\frac{d h}{d\theta}(0)=0$. By $t_{i_0j_0}^*\neq t_{i_0j_0'}^*$, the above equation implies $\langle M\vec{x}_{i_0j_0},M\vec{x}_{i_0j_0'}\rangle=0$.
\end{proof}

Finally we are able to prove Theorem~\ref{thm:barthe}.

\begin{proof}[Proof of Theorem~\ref{thm:barthe}]
With a slight abuse of notation, we also use $M$ to denote the linear map defined by the matrix $M$. We show that $M$ satisfies the requirement in Theorem~\ref{thm:barthe}. Let $\vec{u}_{ij}=M\vec{x}_{ij}/\|M\vec{x}_{ij}\|$ ($i\in[n]$, $j\in[k_i]$). Then $\{\vec{u}_{i1},\vec{u}_{i2},\ldots,\vec{u}_{ik_i}\}$ is an orthonormal basis of $M(V_i)$, and
\begin{equation} \label{eqn:projmatrix}
\Proj_{M(V_i)}=[\vec{u}_{i1},\vec{u}_{i2},\ldots,\vec{u}_{ik_i}]\left[\begin{array}{c}\vec{u}_{i1}^T \\ \vdots \\ \vec{u}_{ik_i}^T\end{array}\right]=\sum_{j=1}^{k_i}\vec{u}_{ij}\vec{u}_{ij}^T.
\end{equation}
We define
$$\varepsilon_{ij}=\frac{\partial f}{\partial t_{ij}}\Big(\vec{t}^*,R_1^*(\vec{t}^*),R_2^*(\vec{t}^*),\ldots,R_n^*(\vec{t}^*)\Big)\in[-\frac{\varepsilon}{m},\frac{\varepsilon}{m}].$$
Note that $\frac{d}{ds}\ln\det(A)=\tr(A^{-1}\frac{d}{ds}A)$ for invertible matrix $A$ (Theorem~4 in~\cite[Chapter~9]{Lax07}). We have
$$\varepsilon_{ij}= p_i-\tr\left(X^{-1}e^{t_{ij}^*}\vec{x}_{ij}\vec{x}_{ij}^T\right)=p_i-e^{t_{ij}^*}\cdot\tr\left(M\vec{x}_{ij}\vec{x}_{ij}^TM^T\right)=p_i-e^{t_{ij}^*}\cdot\|M\vec{x}_{ij}\|^2.$$
By the definition of $X$ and $M$,
$$M^{-1}(M^T)^{-1}=X=\sum_{i\in[n],j\in[k_i]}e^{t_{ij}^*}\vec{x}_{ij}\vec{x}_{ij}^T\quad\Longrightarrow\quad\sum_{i\in[n],j\in[k_i]}e^{t_{ij}^*}(M\vec{x}_{ij})(M\vec{x}_{ij})^T=I_{\ell\times\ell}.$$
Therefore
$$\sum_{i\in[n],j\in[k_i]}(p_i-\varepsilon_{ij})\vec{u}_{ij}\vec{u}_{ij}^T=\sum_{i\in[n],j\in[k_i]}e^{t_{ij}^*}\|M\vec{x}_{ij}\|^2\left(\frac{M\vec{x}_{ij}}{\|M\vec{x}_{ij}\|}\right)\left(\frac{M\vec{x}_{ij}}{\|M\vec{x}_{ij}\|}\right)^T=I_{\ell\times\ell}.$$
By~(\ref{eqn:projmatrix}),
$$\Big\|\sum_{i=1}^np_i\Proj_{M(V_i)}-I_{\ell\times\ell}\Big\|=\Big\|\sum_{i\in[n],j\in[k_i]}\varepsilon_{ij}\vec{u}_{ij}\vec{u}_{ij}^T\Big\|\leq\frac{\varepsilon}{m}\sum_{i\in[n],j\in[k_i]}\|\vec{u}_{ij}\vec{u}_{ij}^T\|\leq\varepsilon.$$
Thus Theorem~\ref{thm:barthe} is proved.
\end{proof}

\subsection{A convenient form of Theorem~\ref{thm:barthe}}

We give Theorem~\ref{thm:barthec} which is implied by Theorem~\ref{thm:barthe} and is the form that will be used in our proof.  Before stating the theorem, we need to define {\em admissible sets} and {\em admissible vectors} as Definition~\ref{def:adsv}, which have weaker requirements than admissible basis sets and admissible basis vectors (Definition~\ref{def:adbasic}) as they are not required to span the entire arrangement.

\begin{definition}[admissible  set, admissible  vector] \label{def:adsv}
Given a list of vector spaces $\mathcal{V}=(V_1,V_2,\ldots,V_n)$ ($V_i\subseteq\R^\ell$), a set $H\subseteq [n]$ is called a {\em $\mathcal{V}$-admissible  set} if
 $\dim(\sum_{i\in H}V_i)=\sum_{i\in H}\dim(V_i),$ i.e. if every space with index in $H$ has intersection $\{\vec{0}\}$ with the span of the other spaces with indices in $H$. A {\em $\mathcal{V}$-admissible vector} is any indicator vector $\vec{1}_H$ of some $\mathcal{V}$-admissible  set $H$. 
\end{definition}

\begin{theorem} \label{thm:barthec}
Given a list of vector spaces $\mathcal{V}=(V_1,V_2,\ldots,V_n)$ ($V_i\subseteq\R^\ell$) and a vector $\vec{p}\in\R^n$ in the convex hull of all $\mathcal{V}$-admissible vectors. Then there exists an invertible linear map $M:\R^\ell\mapsto\R^\ell$ such that for any unit vector $\vec{w}\in\R^\ell$,
$$\sum_{i=1}^np_i\|\Proj_{M(V_i)}(\vec{w})\|^2\leq2,$$
where $\Proj_{M(V_i)}(\vec{w})$ is the projection of $\vec{w}$ onto $M(V_i)$.
\end{theorem}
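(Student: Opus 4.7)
The plan is to deduce Theorem~\ref{thm:barthec} from Theorem~\ref{thm:barthe} by augmenting $\mathcal{V}$ with $\ell$ generic auxiliary one-dimensional subspaces, so that every $\mathcal{V}$-admissible set extends to a $\mathcal{V}'$-admissible \emph{basis} set of the augmented list $\mathcal{V}':=(V_1,\ldots,V_n,W_1,\ldots,W_\ell)$. I take $W_j:=\spn\{\vec{w}_j\}$ chosen in \emph{sufficiently general position}, by which I mean: for every $\mathcal{V}$-admissible set $H$ and every subset $S\subseteq[\ell]$ with $|S|=\ell-s_H$ (where $s_H:=\sum_{i\in H}\dim V_i$), the list $(V_i)_{i\in H}$ together with $(W_j)_{j\in S}$ is linearly independent and spans $\R^\ell$. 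Since each failure imposes a proper polynomial condition on $(\vec{w}_1,\ldots,\vec{w}_\ell)$ and there are only finitely many pairs $(H,S)$, a generic choice works simultaneously, and in particular $\mathcal{V}'$ spans $\R^\ell$.

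Next, set $\bar{s}:=\sum_{i=1}^n p_i\dim(V_i)$ and $q:=(\ell-\bar{s})/\ell$, which is non-negative because every $\mathcal{V}$-admissible $H$ satisfies $s_H\leq\ell$. Define $\vec{p}':=(p_1,\ldots,p_n,q,\ldots,q)\in\R^{n+\ell}$. The main technical step is to verify that $\vec{p}'$ lies in the convex hull of $\mathcal{V}'$-admissible basis vectors. For this I decompose $\vec{p}=\sum_\alpha\lambda_\alpha\vec{1}_{H_\alpha}$ with $\mathcal{V}$-admissible $H_\alpha$ (possible by hypothesis) and replace each summand $\lambda_\alpha\vec{1}_{H_\alpha}$ by the uniform average
$$\lambda_\alpha\binom{\ell}{\ell-s_{H_\alpha}}^{\!-1}\sum_{\substack{S\subseteq[\ell]\\|S|=\ell-s_{H_\alpha}}}\vec{1}_{H_\alpha\,\cup\,\{n+j\,:\,j\in S\}}.$$
By the genericity of the $W_j$'s, each indicator on the right is a $\mathcal{V}'$-admissible basis vector. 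A direct counting argument shows that the first $n$ coordinates of the resulting convex combination reproduce $\vec{p}$, while each of the last $\ell$ coordinates equals $\sum_\alpha\lambda_\alpha(\ell-s_{H_\alpha})/\ell=(\ell-\bar{s})/\ell=q$, as required.

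Having verified the hypotheses of Theorem~\ref{thm:barthe} for $(\mathcal{V}',\vec{p}')$, I apply it with $\varepsilon:=1$ to obtain an invertible linear map $M:\R^\ell\to\R^\ell$ with
$$\Big\|\sum_{i=1}^np_i\Proj_{M(V_i)}+q\sum_{j=1}^\ell\Proj_{M(W_j)}-I_{\ell\times\ell}\Big\|\leq1.$$
Evaluating the quadratic form of this operator at any unit vector $\vec{w}\in\R^\ell$, and using the identity $\vec{w}^T\Proj_{M(V_i)}\vec{w}=\|\Proj_{M(V_i)}(\vec{w})\|^2$, yields
$$\sum_{i=1}^np_i\|\Proj_{M(V_i)}(\vec{w})\|^2+q\sum_{j=1}^\ell\|\Proj_{M(W_j)}(\vec{w})\|^2\leq2,$$
and discarding the non-negative $W$-contribution gives the desired bound. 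The step I expect to require the most attention is the general-position claim — it must hold simultaneously for all the (finitely many) admissible $H_\alpha$ appearing in the decomposition of $\vec{p}$ — together with the averaging calculation that makes the last $\ell$ coordinates of $\vec{p}'$ all equal to a single constant $q$. Both are elementary but deserve a careful write-up.
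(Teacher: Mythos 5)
Your argument is correct, and it takes a genuinely different route from the paper's. The paper augments $\mathcal{V}$ with $d=\dim(V_1+\cdots+V_n)$ one-dimensional spaces spanned by an orthonormal basis of the span $V$; since these still only span $V$ (possibly a proper subspace of $\R^\ell$), the paper then needs an extra change-of-coordinates step, namely an isometry $P:V\to\R^d$, so that the augmented family spans the full ambient space before Theorem~\ref{thm:barthe} can be invoked. The paper also extends each admissible $H$ to a single basis set $H'=H\cup G$ rather than averaging. You instead augment with $\ell$ \emph{generic} one-dimensional spaces, which makes the spanning hypothesis of Theorem~\ref{thm:barthe} automatic in $\R^\ell$ and removes the need for the projection-to-$\R^d$ step entirely. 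Your uniform averaging over all extensions $S$ is a pleasant symmetrization, producing the constant weight $q=(\ell-\bar s)/\ell\geq 0$ on the auxiliary coordinates, though it is not strictly necessary: any one extension per $H_\alpha$ would do, as the paper's version shows, since all that matters in the end is that the auxiliary contribution to the quadratic form is non-negative and can be discarded. Your proof is slightly cleaner; the paper's uses fewer auxiliary spaces ($d\leq\ell$). One small note: the general-position condition is most cleanly stated as holding simultaneously for \emph{all} $\mathcal{V}$-admissible sets $H$ (a finite family) rather than only those in a particular decomposition of $\vec{p}$, which sidesteps any dependence of the choice of $W_j$ on the decomposition; the rest of your write-up then goes through verbatim.
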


Note that with a slight abuse of notation we use $\Proj_{M(V_i)}$ to denote both the projection matrix and the projection map.

\begin{proof}
We use $V$ to denote $V_1+V_2+\cdots+V_n$. Let $d=\dim(V)$ and $\{\vec{b}_1,\vec{b}_2,\ldots,\vec{b}_d\}$ be some orthonormal basis of $V$. We construct $(\mathcal{V}',\vec{p}')$ satisfying the conditions in Theorem~\ref{thm:barthe} in the following 2 steps.
\begin{enumerate}
\item In this step, we construct $\widetilde{\mathcal{V}}$ and $\vec{p}'$ so that $\vec{p}'$ is in the convex hull of all $\widetilde{\mathcal{V}}$-admissible basis vectors.  Define $V_{n+1}=\spn\{\vec{b}_1\}$, $V_{n+2}=\spn\{\vec{b}_2\}$, \ldots, $V_{n+d}=\spn\{\vec{b}_d\}$ and
$$\widetilde{\mathcal{V}}=(V_1,V_2,\ldots,V_n,V_{n+1},V_{n+2},\ldots,V_{n+d}).$$
For every $\mathcal{V}$-admissible set $H\subseteq[n]$, we can see that $H$ is also $\widetilde{\mathcal{V}}$-admissible, and there is a subset $G\subseteq\{n+1,n+2,\ldots,n+d\}$ such that $H'=H\cup G$ is a $\widetilde{\mathcal{V}}$-admissible basis set. Assume
$$\vec{p}=\sum_{\mathcal{V}\text{-admissible }H}\mu_H\vec{1}_H,$$
where $\mu_H\in[0,1]$ and $\sum\mu_H=1$. We define
$$\vec{p}'=\sum_{\mathcal{V}\text{-admissible }H}\mu_H\vec{1}_{H'},$$
where $H'$ is the $\widetilde{\mathcal{V}}$-admissible basis set extended from $H$ as above. We can see that $\vec{p}$ is a prefix of $\vec{p}'$, and $\vec{p}'$ is in the convex hull of all $\widetilde{\mathcal{V}}$-admissible basis vectors.
\item In this step, we construct $\mathcal{V}'$ based on $\widetilde{\mathcal{V}}$ so that the vector spaces span the entire Euclidean space. We find an isomorphism linear map $P:V\mapsto\R^d$ such that $P(\vec{b}_i)=\vec{e}_i$ for $i\in[d]$, where $\{\vec{e}_1,\vec{e}_2,\ldots,\vec{e}_d\}$ is the standard basis of $\R^d$. Define
$$\mathcal{V}'=\big(V_1',V_2',\ldots,V_{n+d}'\big)=\big(P(V_1),P(V_2),\ldots,P(V_{n+d})\big).$$

We can see that $V_1'+V_2'+\cdots+V_{n+d}'=\R^d$ and $\vec{p}'$ is in the convex hull of all $\mathcal{V}'$-admissible basis vectors. Hence $(\mathcal{V}',\vec{p}')$ satisfy the conditions in Theorem~\ref{thm:barthe}.
\end{enumerate}

Apply Theorem~\ref{thm:barthe} on $(\mathcal{V}',\vec{p}')$ with $\varepsilon=1$. There exist an invertible linear map $M':\R^d\mapsto\R^d$ such that
$$\Big\|\sum_{i=1}^{n+d}p_i'\Proj_{M'(V_i')}-I_{d\times d}\Big\|\leq1.$$
For every unit vector $\vec{w}'\in\R^d$, we have
\begin{equation*}\begin{split}
&1\geq\vec{w}^T\left(\sum_{i=1}^{n+d}p_i'\Proj_{M'(V_i')}-I_{d\times d}\right)\vec{w}=\sum_{i=1}^{n+d}p_i'\|\Proj_{M'(V_i')}(\vec{w})\|^2-1, \\
\Longrightarrow\quad&\sum_{i=1}^{n+d}p_i'\|\Proj_{M'(V_i')}(\vec{w})\|^2\leq2.
\end{split}\end{equation*}
Note that the linear map $P$ defined in Step~2 only changes orthonormal basis. We find an invertible linear map $M:\R^\ell\mapsto\R^\ell$ such that $M(\vec{v})=P^{-1}(M'(P(\vec{v})))$ for every $\vec{v}\in V$. Then for every unit vector $\vec{w}\in V$,
$$\sum_{i=1}^{n+d}p_i'\|\Proj_{M(V_i)}(\vec{w})\|^2\leq2.$$
It is easy to see that the same inequality holds for every unit vector $\vec{w}\in\R^\ell$. Recall that in Step~1, $\vec{p}$ is a prefix of $\vec{p}'$. The theorem is proved because the above inequality is stronger than the required.
\end{proof}

\section{Proof of the main Theorem} \label{sec:main}

 Theorem~\ref{thm:main} will follow from the following theorem using a simple recursive argument.
\begin{theorem} \label{thm:main_}
Let $\mathcal{V}=(V_1,V_2,\ldots,V_n)$ ($V_i\in\R^\ell$) be a list of $k$-bounded vector spaces with an $(\alpha,\delta)$-system and $d=\dim(V_1+V_2+\cdots+V_n)$, then for any $\beta\in(0,1)$, at least one of these two cases holds:
\begin{enumerate}
\item $d\leq400\alpha k^3/(\beta\delta)$,
\item There is a sublist of $q\geq\delta n/(20\alpha)$ spaces $(V_{i_1},V_{i_2},\ldots,V_{i_q})$ such that there are nonzero vectors $\vec{z}_1\in V_{i_1},\vec{z}_2\in V_{i_2},\ldots,\vec{z}_q\in V_{i_q}$ with 
$$\dim(\vec{z}_1,\vec{z}_2,\ldots,\vec{z}_q)\leq\beta d.$$
\end{enumerate}
\end{theorem}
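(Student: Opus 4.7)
I will argue by contrapositive: assume Case~2 fails, and derive the bound of Case~1. The failure of Case~2 rephrases as: for every subspace $W\subseteq\R^\ell$ with $\dim W\leq\beta d$, strictly fewer than $q^*:=\delta n/(20\alpha)$ of the $V_i$ satisfy $V_i\cap W\neq\{\vec{0}\}$. From this I will pass through three phases: (i) build a well-spread vector $\vec{p}$ in the convex hull of $\mathcal{V}$-admissible vectors (Definition~\ref{def:adsv}); (ii) apply Theorem~\ref{thm:barthec} to $\vec{p}$ to obtain a change of basis after which the spaces $M(V_i)$ have bounded mutual squared projection; (iii) combine this angle control with the $(\alpha,\delta)$-system to assemble a near-diagonal dependency matrix whose rank forces the claimed bound.

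For phase~(i), I will produce $\vec{p}$ with $p_i\geq c$ for every $i$, for some $c$ of order $\beta d/(kn)$. The mechanism is greedy: given any probability weighting $\vec{\lambda}$ on $[n]$, processing indices in decreasing order of $\lambda_i$ and greedily adding each one whose $V_i$ intersects the current partial span $W$ only at $\{\vec{0}\}$ yields an admissible set $H$ of size strictly greater than $\beta d/k$. Indeed, if $|H|\leq\beta d/k$ at termination then $\dim W\leq\beta d$, so by the failure of Case~2 the set $\{i:V_i\cap W\neq\{\vec{0}\}\}$ has size $<q^*$; but this set contains both $H$ and all skipped indices, giving $n<q^*\leq n$, a contradiction. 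LP duality then converts this lower bound on the maximum $\lambda$-mass of an admissible set into existence of $\vec{p}$ with uniform lower bound $c$.

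Phase~(ii) is now immediate: Theorem~\ref{thm:barthec} gives an invertible $M:\R^\ell\to\R^\ell$ with $\sum_ip_i\|\Proj_{M(V_i)}(\vec{w})\|^2\leq 2$, hence $\sum_i\|\Proj_{M(V_i)}(\vec{w})\|^2\leq 2/c$ for every unit $\vec{w}$; by Lemma~\ref{lem:linearmap} the list $(M(V_i))_i$ still carries the same $(\alpha,\delta)$-system and the same total dimension $d$. For phase~(iii), this uniform projection bound implies that on average pairs of $M(V_i)$'s have angles bounded below. A pruning step in the spirit of Lemma~\ref{lem:removebad} removes a controlled fraction of indices whose spaces are too close to too many others, leaving an $(\alpha,\delta/2)$-system in which any two remaining spaces have angle bounded below by a parameter $\tau$ determined by $c$. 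Fixing an orthonormal basis $\{\vec{u}_{i,j}\}$ of each remaining $M(V_i)$, each pruned-system relation $V_{i_3}\subseteq V_{i_1}+V_{i_2}$ expresses $\vec{u}_{i_3,j}$ as a linear combination of the $\vec{u}_{i_1,\cdot}$ and $\vec{u}_{i_2,\cdot}$ with coefficients of size $O(1/\tau)$. Summing these relations over dependencies containing each basis vector yields an $m\times m$ matrix $A$ (with $m=\sum_i\dim V_i\leq kn$) whose rows lie in the right kernel of the coordinate matrix $U=[\vec{u}_{1,1}\,|\,\cdots\,|\,\vec{u}_{n,k_n}]$, with diagonal entries at least $\delta n$ (the number of dependencies per basis vector) and off-diagonal row sums of size $O(\alpha k/\tau)\cdot n$. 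Restricting $A$ to the submatrix indexed by rows where the diagonal dominates the off-diagonal sum yields a diagonally dominant square matrix, hence invertible, so $\rank A\geq m-(\text{\#bad rows})$, forcing $d=\rank U\leq m-\rank A\leq O(\alpha k^3/(\beta\delta))$. Tracking constants produces the stated $400\alpha k^3/(\beta\delta)$.

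The main obstacle is phase~(i): carrying out the LP-duality/greedy argument quantitatively so that $c$ has the right order $\beta d/(kn)$, because this is what controls the angle parameter $\tau$ via the pruning step and ultimately closes the quantitative loop in phase~(iii). Phase~(iii) is mechanical but requires careful bookkeeping — in particular balancing the pruning step's loss of indices against maintaining enough dependencies for the diagonal-dominance argument — so that the final constant emerges as $400$.
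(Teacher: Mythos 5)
Your overall architecture (a convex combination of admissible vectors, then Theorem~\ref{thm:barthec}, then pruning of non-separated pairs, then a rank bound on a near-diagonal dependency matrix) is the paper's, but phase~(i) as you state it is not merely ``the main obstacle'' --- the claim is false. One cannot in general produce $\vec{p}$ in the convex hull of $\mathcal{V}$-admissible vectors with $p_i\geq c$ for \emph{every} $i$ with $c$ of order $\beta d/(kn)$. Take $r$ indices whose spaces all equal one fixed nonzero space $V$ (repeated spaces are allowed; Definition~\ref{def:adsystem} even provides $2$-element sets for them). Any admissible set contains at most one of these indices, so every $\vec{p}$ in the convex hull satisfies $\sum_{j=1}^{r}p_{i_j}\leq 1$, hence $\min_j p_{i_j}\leq 1/r$. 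Choosing $4kn/(\beta d)<r<\delta n/(20\alpha)$ --- which is possible exactly in the regime $d>400\alpha k^3/(\beta\delta)$ you are fighting --- these $r$ spaces are too few to witness Case~2, yet they force some $p_{i_j}<\beta d/(4kn)$. Concretely, your greedy run with $\vec{\lambda}$ uniform on these $r$ indices returns a set of \emph{size} greater than $\beta d/k$ but of $\lambda$-\emph{mass} only $1/r$, so LP duality cannot upgrade the size bound to a uniform coordinate bound. This is why the paper's Lemma~\ref{lem:bigprob} uses a uniformly random greedy order and asserts $p_i\geq\beta d/(4kn)$ only for $i$ in a set $I$ with $|I|\geq(1-\delta/(10\alpha))n$, and why the up to $\delta n/(10\alpha)$ exceptional indices must be charged downstream to the bad-pair budget (the $|[n]\setminus I|\cdot n$ term). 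Your phases~(ii)--(iii) make no provision for exceptional indices, so this is a structural gap, not bookkeeping.

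The endgame also misfires in two places. First, after pruning you retain only a \emph{sublist} of the spaces, so your matrix $U$ records bases of the surviving spaces and $\rank U$ is the dimension of \emph{their} span, not $d$; the step ``$d=\rank U\leq\cdots$'' is wrong, and the argument cannot directly emit Case~1. The correct use of the pruned, $\tau$-separated system of $q\geq\delta n/(20\alpha)$ spaces is Theorem~\ref{thm:sep}: their span has dimension at most $40\alpha k/\delta\leq\beta d$, which \emph{is} Case~2 --- the proof naturally runs as ``$\neg\text{Case 1}\Rightarrow\text{Case 2}$,'' with $d>400\alpha k^3/(\beta\delta)$ used both to control the number of bad pairs and to verify $40\alpha k/\delta\leq\beta d$. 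Second, your rank bound via ``restrict to rows where the diagonal dominates and invoke diagonal dominance'' does not work as described: deleting rows does not yield a square matrix, and the available per-row control is on the sum of \emph{squares} of off-diagonal entries (about $\alpha\lceil\delta n\rceil/\tau$ against a diagonal of $\lceil\delta n\rceil$), which by Cauchy--Schwarz permits $\ell_1$ row sums exceeding the diagonal by a factor of order $\sqrt{\alpha k/(\tau\delta)}$. You need the trace/Frobenius rank bound of Lemma~\ref{lem:diagdom} rather than Gershgorin-type dominance.
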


\begin{proof}
Initially let $\mathcal{V}^{(0)}=(V_1^{(0)},V_2^{(0)},\ldots,V_{n_0}^{(0)})=\mathcal{V}$, $\delta_0=\delta$ and $d_0=d$, where $n_0=n$ and $V_i^{(0)}=V_i$.

Starting with $t=0$, $\mathcal{V}^{(t)}=(V_1^{(t)},V_2^{(t)},\ldots,V_{n_t}^{(t)})$ is a list of $k$-bounded vectors spaces with an $(\alpha,\delta_t)$-system and $d_t=\dim(V_1^{(t)}+V_2^{(t)}+\cdots+V_{n_t}^{(t)})$. We apply Theorem~\ref{thm:main_} on $\mathcal{V}^{(t)}$.
\begin{itemize}
\item If the first case of Theorem~\ref{thm:main_} holds, i.e. $d_t\leq400\alpha k^3/(\beta\delta_t)$, terminate.
\item If the second case of Theorem~\ref{thm:main_} holds, i.e. there exist $\vec{z}_1,\vec{z}_2,\ldots,\vec{z}_q$ from $q\geq\delta_tn_t/(20\alpha)$ spaces such that $\dim(\vec{z}_1,\vec{z}_2,\ldots,\vec{z}_q)\leq\beta d$.

We find a linear map $P:\R^\ell\mapsto\R^\ell$ whose kernel equals $\spn\{\vec{z}_1,\vec{z}_2,\ldots,\vec{z}_q\}$. Define 
$$\mathcal{V}^{(t+1)}=(V_1^{(t+1)},V_2^{(t+1)},\ldots,V_{n_{t+1}}^{(t+1)})$$
as the list of nonzero spaces in $P(V_1^{(t)}),P(V_1^{(t)}),\ldots,P(V_{n_t}^{(t)})$. By Corollary~\ref{cor:remove}, $\mathcal{V}^{(t+1)}$ has an $(\alpha,\delta_{t+1})$-system for $\delta_{t+1}=\delta_tn_t/n_{t+1}$.

Let $t\leftarrow t+1$ and repeat the procedure.
\end{itemize}

In the above procedure, we can see $\delta_tn_t=\delta_{t-1}n_{t-1}=\cdots=\delta_0n_0=\delta n$. Note that each step we map vectors from $q\geq\delta_tn_t/(20\alpha)=\delta n/(20\alpha)$ spaces to $\vec{0}$, hence
$$\dim V_1^{(t+1)}+\dim V_2^{(t+1)}+\cdots+\dim V_{n_{t+1}}^{(t+1)}\leq\dim V_1^{(t)}+\dim V_2^{(t)}+\cdots+\dim V_{n_t}^{(t)}-\frac{\delta n}{20\alpha}.$$
Since initially $\dim V_1+\dim V_2+\cdots+\dim V_n\leq kn$, we must terminate after at most
$$\frac{kn}{\delta n/(20\alpha)}=\frac{20\alpha k}{\delta}$$ steps.

At the $t$-th step,
$$d_t\geq(1-\beta)d_{t-1}\geq\cdots\geq(1-\beta)^td_0=(1-\beta)^td.$$
And if the $t$-th step is the last step we have $d_t\leq400\alpha k^3/(\beta\delta_t)\leq400\alpha k^3/(\beta\delta)$ by $\delta_t\geq\delta$ (implied by $\delta_tn_t=\delta n$ and $n_t\leq n$). Therefore
$$d\leq\left(\frac{1}{1-\beta}\right)^{20\alpha k/\delta}\cdot\frac{400\alpha k^3}{\beta\delta}.$$
We assign $\beta=\min\{1/2,\delta/(\alpha k)\}$. It is easy to verify that $1/(1-\beta)^{\alpha k/\delta}\leq4$ in both cases $\delta/(\alpha k)<1/2$ and $\delta/(\alpha k)\geq1/2$. Therefore
$$d\leq4^{20}\cdot\frac{400\alpha k^3}{\beta\delta}=O(\alpha^2k^4/\delta^2),$$
and Theorem~\ref{thm:main} is proved.
\end{proof}

\subsection{Proof of Theorem~\ref{thm:main_} -- a special case}

In this subsection, we consider the case that all vector spaces are `well separated'.

\begin{definition}
Two vector spaces $V,V'\subseteq\R^\ell$ are {\em $\tau$-separated} if $|\langle\vec{u},\vec{u}'\rangle|\leq1-\tau$ for any two unit vectors $\vec{u}\in V$ and $\vec{u}'\in V'$.
\end{definition}

We will use the following two simple lemmas about $\tau$-separated spaces.
\begin{lemma} \label{lem:smallcoef}
Given two vector spaces $V,V'\subseteq\R^\ell$ that are $\tau$-separated and let $B=\{\vec{u}_1,\vec{u}_2,\ldots,\vec{u}_{k_1}\}$ and $B'=\{\vec{u}_1',\vec{u}_2',\ldots,\vec{u}_{k_2}'\}$ be orthonormal bases for $V,V'$ respectively. For any unit vector $\vec{u}\in V+V'$, if we write $\vec{u}$ as
$$\vec{u}=\lambda_1\vec{u}_1+\lambda_2\vec{u}_2+\cdots+\lambda_{k_1}\vec{u}_{k_1}+\mu_1\vec{u}_1'+\mu_2\vec{u}_2'+\cdots+\mu_{k_2}\vec{u}_{k_2}',$$
then the coefficients satisfy
$\lambda_1^2+\lambda_2^2+\cdots+\lambda_{k_1}^2+\mu_1^2+\mu_2^2+\cdots+\mu_{k_2}^2\leq\frac{1}{\tau}.$
\end{lemma}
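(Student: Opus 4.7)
My plan is to reduce the coefficient bound to a statement about the norms of the two components of $\vec{u}$ inside $V$ and $V'$, respectively, and then exploit the angle bound.

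Write $\vec{a}=\sum_{i=1}^{k_1}\lambda_i\vec{u}_i \in V$ and $\vec{b}=\sum_{j=1}^{k_2}\mu_j\vec{u}_j' \in V'$, so $\vec{u}=\vec{a}+\vec{b}$. Because $B$ and $B'$ are orthonormal, Parseval gives $\|\vec{a}\|^2=\sum_i\lambda_i^2$ and $\|\vec{b}\|^2=\sum_j\mu_j^2$. Therefore the inequality to prove is simply
$$\|\vec{a}\|^2+\|\vec{b}\|^2\leq\frac{1}{\tau}$$
whenever $\|\vec{a}+\vec{b}\|=1$.

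The $\tau$-separation hypothesis applies to unit vectors, so after normalizing I get $|\langle\vec{a},\vec{b}\rangle|\leq(1-\tau)\|\vec{a}\|\|\vec{b}\|$ (the bound is trivial if either $\vec{a}$ or $\vec{b}$ is zero). Expanding $\|\vec{u}\|^2=1$ then yields
$$1=\|\vec{a}\|^2+2\langle\vec{a},\vec{b}\rangle+\|\vec{b}\|^2\geq\|\vec{a}\|^2+\|\vec{b}\|^2-2(1-\tau)\|\vec{a}\|\|\vec{b}\|.$$
Applying the AM--GM inequality $2\|\vec{a}\|\|\vec{b}\|\leq\|\vec{a}\|^2+\|\vec{b}\|^2$ to the negative term gives
$$1\geq\|\vec{a}\|^2+\|\vec{b}\|^2-(1-\tau)(\|\vec{a}\|^2+\|\vec{b}\|^2)=\tau\bigl(\|\vec{a}\|^2+\|\vec{b}\|^2\bigr),$$
which rearranges to the desired bound.

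There is no real obstacle here: everything reduces to one expansion of $\|\vec{a}+\vec{b}\|^2$, one application of the angle hypothesis, and one use of AM--GM. The only mild subtlety is making sure the angle bound is applied to \emph{unit} vectors (so one must handle the trivial cases $\vec{a}=0$ or $\vec{b}=0$ separately, or absorb them into the normalization).
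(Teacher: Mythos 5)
Your argument is correct and is essentially identical to the paper's: both decompose $\vec{u}$ into its $V$-component and $V'$-component, expand $\|\vec{u}\|^2$, apply the $\tau$-separation bound to the cross term, and finish with AM--GM. Your explicit note about handling the degenerate cases $\vec{a}=\vec{0}$ or $\vec{b}=\vec{0}$ is a small but welcome addition; the paper leaves it implicit.
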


\begin{proof}
Let $\vec{v}=\lambda_1\vec{u}_1+\lambda_2\vec{u}_2+\cdots+\lambda_{k_1}\vec{u}_{k_1}$ and $\vec{w}=\mu_1\vec{u}_1'+\mu_2\vec{u}_2'+\cdots+\mu_{k_2}\vec{u}_{k_2}'$. We have
\begin{equation*}\begin{split}
1&=\|\vec{u}\|^2=\|\vec{v}+\vec{w}\|^2=\|\vec{v}\|^2+\|\vec{w}\|^2+2\langle\vec{v},\vec{w}\rangle\geq\|\vec{v}\|^2+\|\vec{w}\|^2-2(1-\tau)\|\vec{v}\|\|\vec{w}\| \\
&\geq\tau(\|\vec{v}\|^2+\|\vec{w}\|^2) \\
&=\tau(\lambda_1^2+\lambda_2^2+\cdots+\lambda_{k_1}^2+\mu_1^2+\mu_2^2+\cdots+\mu_{k_2}^2).\qedhere
\end{split}\end{equation*}
\end{proof}

\begin{lemma} \label{lem:spacetobasis}
Given two vector spaces $V,V'\subseteq\R^\ell$ and let $B=\{\vec{u}_1,\vec{u}_2,\ldots,\vec{u}_{k_1}\}$ be an orthonormal basis of $V$. If $V$ and $V'$ are not $\tau$-separated, there must exist $j\in[k_1]$ such that $\|\Proj_{V'}(\vec{u}_j)\|^2\geq(1-\tau)^2/k_1$, where $\Proj_{V'}(\vec{u}_j)$ is the projection of $\vec{u}_j$ onto $V'$.
\end{lemma}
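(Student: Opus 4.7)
The plan is to exploit directly the definition of non-$\tau$-separation and expand in the given basis. Since $V$ and $V'$ are not $\tau$-separated, by definition there exist unit vectors $\vec{u}\in V$ and $\vec{u}'\in V'$ with $|\langle\vec{u},\vec{u}'\rangle|>1-\tau$. I would then write $\vec{u}$ in the orthonormal basis $B$ as $\vec{u}=\sum_{j=1}^{k_1}\lambda_j\vec{u}_j$, which gives $\sum_j\lambda_j^2=1$, and expand
$$\langle\vec{u},\vec{u}'\rangle=\sum_{j=1}^{k_1}\lambda_j\langle\vec{u}_j,\vec{u}'\rangle.$$

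Next, I would apply the Cauchy--Schwarz inequality to this sum: using $\sum_j\lambda_j^2=1$ one gets
$$\sum_{j=1}^{k_1}\langle\vec{u}_j,\vec{u}'\rangle^2\;\geq\;\langle\vec{u},\vec{u}'\rangle^2\;>\;(1-\tau)^2.$$
By an averaging/pigeonhole argument there must exist an index $j\in[k_1]$ with $\langle\vec{u}_j,\vec{u}'\rangle^2\geq(1-\tau)^2/k_1$.

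Finally I would relate this inner product to the projection onto $V'$. Since $\vec{u}'\in V'$ is a unit vector, $\langle\vec{u}_j,\vec{u}'\rangle=\langle\Proj_{V'}(\vec{u}_j),\vec{u}'\rangle$, and Cauchy--Schwarz (or simply the fact that $\vec{u}'$ is a unit vector in $V'$) gives $|\langle\vec{u}_j,\vec{u}'\rangle|\leq\|\Proj_{V'}(\vec{u}_j)\|$. Squaring and combining yields $\|\Proj_{V'}(\vec{u}_j)\|^2\geq(1-\tau)^2/k_1$, which is the required conclusion. There is no real obstacle here; the lemma is a routine consequence of Cauchy--Schwarz and pigeonhole, recorded for convenient reference in the main argument.
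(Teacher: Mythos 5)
Your proof is correct and follows essentially the same route as the paper: both expand the witness unit vector $\vec{u}$ in the orthonormal basis $B$, apply Cauchy--Schwarz, and finish by pigeonhole (the paper applies Cauchy--Schwarz to the norms $\|\Proj_{V'}(\vec{u}_j)\|$ directly, whereas you apply it to the inner products $\langle\vec{u}_j,\vec{u}'\rangle$ and then pass to the projections, an immaterial difference).
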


\begin{proof}
Let $\vec{u}\in V$, $\vec{u}'\in V'$ be unit vectors such that $|\langle\vec{u},\vec{u}'\rangle|>1-\tau$. Then $\|\Proj_{V'}(\vec{u})\|\geq|\langle\vec{u},\vec{u}'\rangle|>1-\tau$. Suppose $\vec{u}=\lambda_1\vec{u}_1+\lambda_2\vec{u}_2+\cdots+\lambda_{k_1}\vec{u}_{k_1}$, where $\lambda_1^2+\lambda_2^2+\cdots+\lambda_{k_1}^2=1$. We have
\begin{equation*}\begin{split}
(1-\tau)^2 & <\|\Proj_{V'}(\vec{u})\|^2\leq\Big(\sum_{j=1}^{k_1}|\lambda_j|\cdot\|\Proj_{V'}(\vec{u}_j)\|\Big)^2\leq\Big(\sum_{j=1}^{k_1}\lambda_j^2\Big)\Big(\sum_{j=1}^{k_1}\|\Proj_{V'}(\vec{u}_j)\|^2\Big) \\
& = \sum_{j=1}^{k_1}\|\Proj_{V'}(\vec{u}_j)\|^2.
\end{split}\end{equation*}
Therefore there exists $j\in[k_1]$ such that $\|\Proj_{V'}(\vec{u}_j)\|^2\geq(1-\tau)^2/k_1$.
\end{proof}

We will need the following lower bound for the rank of a diagonal dominating matrix. The same lemma for Hermitian matrices was proved in~\cite{BDWY-pnas}. Here we change the proof slightly and show that the consequence also holds for an arbitrary matrix.

\begin{lemma} \label{lem:diagdom}
Let $D=(d_{ij})$ be a complex $m\times m$ matrix and $L,K$ be positive real numbers. If $d_{ii}=L$ for every $i\in[m]$ and $\sum_{i\neq j}|d_{ij}|^2\leq K$, then $\rank(D)\geq m-K/L^2$.
\end{lemma}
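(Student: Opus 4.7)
My plan is to argue via the dimension of the kernel. Write $D = L\cdot I + E$, where $E$ is the off-diagonal part: $E$ has zeros on the diagonal and $\|E\|_F^2 = \sum_{i\neq j}|d_{ij}|^2 \leq K$. Set $s = \dim(\ker D) = m - \rank(D)$; the goal is to show $s \leq K/L^2$.

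Let $\vec{x}_1,\ldots,\vec{x}_s$ be an orthonormal basis for $\ker D$. For each $j$, the equation $D\vec{x}_j = 0$ reads $L\vec{x}_j = -E\vec{x}_j$, so $\|E\vec{x}_j\|^2 = L^2$. Summing over $j$ gives
$$L^2 s \;=\; \sum_{j=1}^s \|E\vec{x}_j\|^2 \;=\; \sum_{j=1}^s \vec{x}_j^* (E^*E) \vec{x}_j \;=\; \tr(E^* E\, P),$$
where $P = \sum_j \vec{x}_j \vec{x}_j^*$ is the orthogonal projection onto $\ker D$.

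The main (and only) step is now to bound $\tr(E^*E\, P) \leq \tr(E^*E)$. Since $E^*E$ is positive semidefinite and $0 \preceq P \preceq I$, the matrix $I - P$ is also PSD, so $(E^*E)^{1/2}(I-P)(E^*E)^{1/2}$ is PSD, which gives $\tr(E^*E(I - P)) \geq 0$, i.e. $\tr(E^*E\, P) \leq \tr(E^*E) = \|E\|_F^2 \leq K$. Combining with the previous display yields $L^2 s \leq K$, so $s \leq K/L^2$ and hence $\rank(D) = m - s \geq m - K/L^2$, as claimed.

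There is no real obstacle here: the argument is a two-line spectral calculation that avoids any use of Hermiticity (unlike the version in \cite{BDWY-pnas}), because we work with $E^*E$ rather than with $E$ itself. The only subtlety worth flagging in the writeup is the inequality $\tr(E^*E\, P) \leq \tr(E^*E)$, which is where the PSD structure of $E^*E$ is used; everything else is direct substitution.
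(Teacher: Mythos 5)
Your proof is correct, and it takes a genuinely different route from the paper's. You bound the kernel directly: each unit vector $\vec{x}\in\ker D$ forces $\|E\vec{x}\|=L$, and summing over an orthonormal basis of the kernel gives $L^2\dim\ker D=\tr(E^*EP)\leq\|E\|_F^2\leq K$; every step checks out, including the trace inequality $\tr(E^*EP)\leq\tr(E^*E)$, which you justify correctly via positive semidefiniteness of $(E^*E)^{1/2}(I-P)(E^*E)^{1/2}$. The paper instead works with the whole matrix $D$ via its singular value decomposition, using $|\tr(D)|\leq\sigma_1+\cdots+\sigma_r$ and Cauchy--Schwarz to get $(mL)^2\leq r\|D\|_F^2\leq r(mL^2+K)$, hence $r\geq m^2/(m+K/L^2)\geq m-K/L^2$. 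Both arguments are elementary, avoid any Hermiticity assumption, and yield the same final bound (the paper's intermediate bound $m^2/(m+K/L^2)$ is marginally sharper but is immediately relaxed). One small advantage of your version: it only uses that each diagonal entry has modulus $L$ (indeed, modulus at least $L$ would suffice, replacing $\|E\vec{x}\|=L$ by $\|E\vec{x}\|=\|\diag(d_{11},\ldots,d_{mm})\vec{x}\|\geq L$), whereas the paper's trace argument relies on the diagonal entries not cancelling in $\tr(D)$, i.e., on their sum having modulus $mL$. For the lemma as stated, where all $d_{ii}$ equal the same positive real $L$, this makes no difference.
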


\begin{proof}
Let $r$ be the rank of $D$. Consider the singular value decomposition of $D$, say $D=U\Sigma V$, where $U,V$ are unitary matrices and $\Sigma$ is a non-negative diagonal matrix. Let $\sigma_1,\sigma_2,\ldots,\sigma_r$ be the nonzero singular values on the diagonal of $\Sigma$.
\begin{equation*}\begin{split}
(mL)^2&=\tr(D)^2=\tr(U\Sigma V)^2=\tr(\Sigma(VU))^2\leq(\sigma_1+\cdots+\sigma_r)^2\leq r(\sigma_1^2+\cdots+\sigma_r^2) \\
&=r\|D\|_F^2\leq r(mL^2+K).
\end{split}\end{equation*}
Therefore $r\geq(mL)^2/(mL^2+K)=m^2/(m+K/L^2)\geq m-K/L^2$.
\end{proof}

The following theorem handles the `well separated case' of Theorem~\ref{thm:main_}.

\begin{theorem} \label{thm:sep}
Let $\mathcal{V}=(V_1,V_2,\ldots,V_n)$ ($V_i\in\R^\ell$) be a list of $k$-bounded vector spaces with an $(\alpha,\delta)$-system $\mathcal{S}=(S_1,S_2,\ldots,S_w)$ and $d=\dim(V_1+V_2+\cdots+V_n)$. If for every $j\in[w]$ and $\{i_1,i_2\}\subseteq S_j$, $V_{i_1}$ and $V_{i_2}$ are $\tau$-separated, then $d\leq\alpha k/(\tau\delta)$.
\end{theorem}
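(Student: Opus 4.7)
The plan is to exploit $\tau$-separation to build, for each basis vector in the arrangement, a linear dependency with large coefficient on itself and small coefficients elsewhere. Collecting these dependencies into a single diagonally dominant matrix and applying Lemma~\ref{lem:diagdom} will force the rank of the arrangement to be small.

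First I would fix, for every $i\in[n]$, an orthonormal basis $\{\vec{u}_{i,1},\ldots,\vec{u}_{i,k_i}\}$ of $V_i$ (so $k_i\le k$), and let $m=\sum_{i} k_i\le kn$. Let $U$ be the $\ell\times m$ matrix whose columns are all the $\vec{u}_{i,s}$; by construction $\rank(U)=d$, so the null space $\ker(U)\subseteq\R^m$ has dimension $m-d$. The goal is to produce an $m\times m$ matrix $D$ whose rows lie in $\ker(U)$ (so $\rank(D)\le m-d$) and which simultaneously has large rank by Lemma~\ref{lem:diagdom}.

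To build $D$, fix a coordinate $(i,s)$. For every $S_j\in\mathcal{S}$ with $i\in S_j$, the hypotheses give $V_i\subseteq\sum_{i'\in S_j\setminus\{i\}} V_{i'}$, so $\vec{u}_{i,s}$ can be written as a linear combination $\sum_{i'\in S_j\setminus\{i\},\,j'} c^{(j)}_{i',j'}\vec{u}_{i',j'}$ of basis vectors from the other space(s) in $S_j$. Here is the crucial point: by $\tau$-separation and Lemma~\ref{lem:smallcoef}, these coefficients satisfy $\sum_{i',j'}\big(c^{(j)}_{i',j'}\big)^2\le 1/\tau$. Summing over all $j$ with $i\in S_j$ and dividing by $N_i\ge\delta n$ (the number of such sets), I get an identity
$$\vec{u}_{i,s}=\sum_{(i',j')\neq(i,s)}\Big(\tfrac{1}{N_i}\sum_{j:\,\{i,i'\}\subseteq S_j} c^{(j)}_{i',j'}\Big)\vec{u}_{i',j'},$$
which rearranges into a vector in $\ker(U)$ with a $1$ in position $(i,s)$. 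Let the row of $D$ indexed by $(i,s)$ be this vector (with $-1$ times the off-diagonal coefficients above, but the diagonal entry equal to $1$).

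Then every diagonal entry of $D$ equals $L=1$. For the off-diagonal mass in row $(i,s)$, Cauchy-Schwarz bounds each entry $\big(\frac{1}{N_i}\sum_{j:\{i,i'\}\subseteq S_j} c^{(j)}_{i',j'}\big)^2$, and summing first over $(i',j')$ and then over $j$, together with the $\tau$-separation bound $\sum_{i',j'}(c^{(j)}_{i',j'})^2\le 1/\tau$ and the pair bound $|\{j:\{i,i'\}\subseteq S_j\}|\le\alpha$, yields
$$\sum_{(i',j')\neq(i,s)}\Big(\tfrac{1}{N_i}\sum_{j:\{i,i'\}\subseteq S_j} c^{(j)}_{i',j'}\Big)^2\;\le\;\frac{\alpha}{\tau N_i}\;\le\;\frac{\alpha}{\tau\delta n}.$$
Summing this bound over all $m\le kn$ rows gives a total off-diagonal sum of squares $K\le\alpha k/(\tau\delta)$. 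Now Lemma~\ref{lem:diagdom} yields $\rank(D)\ge m-K/L^2\ge m-\alpha k/(\tau\delta)$. Combined with $\rank(D)\le m-d$, this gives $d\le\alpha k/(\tau\delta)$, as claimed.

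The main obstacle is the bookkeeping for the off-diagonal bound: one must control a double sum indexed by triples $(j,i',j')$ via Cauchy-Schwarz in the right order so that the $\tau$-separation estimate of Lemma~\ref{lem:smallcoef} is applied \emph{per set $S_j$} and the pair multiplicity $\alpha$ enters only once. Everything else is a straightforward assembly, and the role of normalization by $N_i$ (rather than, say, using the raw sum) is to ensure a uniform diagonal so that Lemma~\ref{lem:diagdom} applies with a clean $L=1$.
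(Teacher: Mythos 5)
Your proposal is correct and follows essentially the same route as the paper: build one dependency vector per set $S_j$ containing $i$ with squared-coefficient mass at most $1/\tau$ via Lemma~\ref{lem:smallcoef}, aggregate them over the at least $\delta n$ sets, control off-diagonal mass by Cauchy--Schwarz using the pair multiplicity $\alpha$, and finish with Lemma~\ref{lem:diagdom}. The only cosmetic difference is that you normalize each row by $N_i$ to get diagonal $1$, whereas the paper keeps the raw sum over exactly $\lceil\delta n\rceil$ sets so the diagonal is $\lceil\delta n\rceil$; both yield the same bound.
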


\begin{proof}
Let $k_1,k_2,\ldots,k_n$ be the dimensions of $V_1,V_2,\ldots,V_n$, and $m=k_1+k_2+\cdots+k_n$. For every $i\in[n]$, fix $B_i=\{\vec{u}_{i1},\vec{u}_{i2},\ldots,\vec{u}_{i{k_i}}\}$ to be some orthonormal basis of $V_i$. We use $A$ to denote the $m\times\ell$ matrix whose rows are $\vec{u}_{11}^T,\ldots,\ldots,\vec{u}_{nk_n}^T$. We will bound $d=\rank(A)$ by constructing a high rank $m\times m$ matrix $D$ satisfying $DA=0$.

For $s\in[m]$, we use $\psi(s)\in[n]$ to denote the number satisfying
$$k_1+k_2+\cdots+k_{\psi(s)-1}+1\leq s\leq k_1+k_2+\cdots+k_{\psi(s)-1}+k_{\psi(s)}.$$
In other words, the $s$-th row of $A$ is a vector in $B_{\psi(s)}$.

\begin{claim}
For every $s\in[m]$, there is a vector $\vec{y}_s\in\R^m$ satisfying $\vec{y}_s^TA=\vec{0}^T$, $y_{ss}=\lceil\delta n\rceil$, and $\sum_{t\neq s}y_{st}^2\leq\alpha\lceil\delta n\rceil/\tau$.
\end{claim}

\begin{proof}
Say the $s$-th row of $A$ is $\vec{u}^T$, where $\vec{u}\in B_{\psi(s)}$. Let $J\subseteq[w]$ be a set of size $|J|=\lceil\delta n\rceil$ such that for every $j\in J$, $S_j$ contains $\psi(s)$. We construct a vector $\vec{c}_j$ for every $j\in J$ as following.
\begin{itemize}
\item If $S_j$ contains 3 elements $\{\psi(s),i,i'\}$, we have $\lambda_1,\lambda_2,\ldots,\lambda_{k_i},\mu_1,\mu_2,\ldots,\mu_{k_{i'}}\in\R$ such that
$$\vec{u}-\lambda_1\vec{u}_{i1}-\lambda_2\vec{u}_{i2}-\cdots-\lambda_{k_i}\vec{u}_{ik_i}-\mu_1\vec{u}_{i'1}-\mu_2\vec{u}_{i'2}-\cdots-\mu_{k_{i'}}\vec{u}_{i'k_{i'}}=\vec{0}.$$
We can obtain from this equation a vector $\vec{c}_j$ such that $\vec{c}_j^TA=\vec{0}^T$, $c_{js}=1$, and by Lemma~\ref{lem:smallcoef}
$$\sum_{t\neq s}c_{jt}^2=\lambda_1^2+\lambda_2^2+\cdots+\lambda_{k_i}^2+\mu_1^2+\mu_2^2+\cdots+\mu_{k_{i'}}^2\leq\frac{1}{\tau}.$$
\item If $S_j$ contains 2 elements $\{\psi(s),i\}$, there exist $\lambda_1,\lambda_2,\ldots,\lambda_{k_i}$ with $\lambda_1^2+\lambda_2^2+\cdots+\lambda_{k_i}^2=1$ such that
$$\vec{u}-\lambda_1\vec{u}_{i1}-\lambda_2\vec{u}_{i2}-\cdots-\lambda_{k_i}\vec{u}_{ik_i}=\vec{0}.$$
We can obtain from this equation a vector $\vec{c}_j$ such that $\vec{c}_j^TA=\vec{0}^T$, $c_{js}=1$, and
$$\sum_{t\neq s}c_{jt}^2=\lambda_1^2+\lambda_2^2+\cdots+\lambda_{k_i}^2=1\leq1/\tau.$$
\end{itemize}
In either case we obtain a $\vec{c}_j$ such that $\vec{c}_j^TA=\vec{0}^T$, $c_{js}=1$ and $\sum_{t\neq s}c_{jt}^2\leq1/\tau$. We define 
$$\vec{y}_s=\sum_{j\in J}\vec{c}_j.$$
We have $\vec{y}_s^TA=\vec{0}^T$ and $y_{ss}=\lceil\delta n\rceil$. We consider $\sum_{t\neq s}y_{st}^2$. From the above construction of $\vec{c}_j$, we can see $c_{jt}\neq0$ ($t\neq s$) only when $\psi(t)\neq\psi(s)$ and $\{\psi(s),\psi(t)\}\subseteq S_j$. Hence for every $t\neq s$, there are at most $\alpha$ nonzero values in $\{c_{jt}\}_{j\in J}$. It follows that
$$\sum_{t\neq s}y_{st}^2=\sum_{t\neq s}\left(\sum_{j\in J}c_{jt}\right)^2\leq\alpha\sum_{t\neq s}\left(\sum_{j\in J}c_{jt}^2\right)=\alpha\sum_{j\in J}\left(\sum_{t\neq s}c_{jt}^2\right)\leq\frac{\alpha\lceil\delta n\rceil}{\tau}.$$
Thus the claim is proved.
\end{proof}

Define $D$ to be the matrix consists of rows $\vec{y}_1^T,\vec{y}_2^T,\ldots,\vec{y}_m^T$. Then every entry on the diagonal of $D$ is $\lceil\delta n\rceil$, and the sum of squares of all entries off the diagonal is at most $\alpha\lceil\delta n\rceil m/\tau$. Apply Lemma~\ref{lem:diagdom} on $D$, and we have
$$\rank(D)\geq m-\frac{\alpha\lceil\delta n\rceil m/\tau}{\lceil\delta n\rceil^2}=m-\frac{\alpha m}{\tau\lceil\delta n\rceil}\geq m-\frac{\alpha k}{\tau\delta}.$$
By $DA=0$, the rank of $A$ is $d\leq\alpha k/(\tau\delta)$.
\end{proof}

\subsection{Proof of Theorem~\ref{thm:main_} -- general case}

Now we prove Theorem~\ref{thm:main_}. We assume that the first case of Theorem~\ref{thm:main_} does not hold, i.e. $d>400\alpha k^3/(\beta\delta)$. We will show the second case holds.

\begin{lemma} \label{lem:bigprob}
At least one of the following two cases holds:
\begin{enumerate}
\item The second case of Theorem~\ref{thm:main_} holds, i.e. there exists a sublist of $q\geq\delta n/(20\alpha)$ spaces $(V_{i_1},V_{i_2},\ldots,V_{i_q})$ such that there are nonzero vectors $\vec{z}_1\in V_{i_1},\vec{z}_2\in V_{i_2},\ldots,\vec{z}_q\in V_{i_q}$ with 
$$\dim(\vec{z}_1,\vec{z}_2,\ldots,\vec{z}_q)\leq\beta d.$$
\item There exist a distribution $\mathcal{D}$ on $\mathcal{V}$-admissible sets and an $I\subseteq[n]$ with $|I|\geq(1-\delta/(10\alpha))n$ such that for every $i\in I$,
$$\Pr_{H\sim\mathcal{D}}[i\in H]\geq\frac{\beta d}{4kn}.$$
\end{enumerate}
\end{lemma}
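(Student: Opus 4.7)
The plan is to argue by contrapositive via linear programming duality. Assume case~2 fails; I will derive case~1. Set $c = \beta d/(4kn)$ and consider the primal LP
\[
V \;=\; \max \sum_{i=1}^n \min(p_i, 2c) \qquad \text{over } \vec{p} \in \operatorname{conv}\bigl\{\vec{1}_H : H \text{ is }\mathcal{V}\text{-admissible}\bigr\}.
\]
If $V \geq 2\bigl(1-\delta/(20\alpha)\bigr)nc$, take an optimizer $\vec{p}^*$ and let $B = \{i : p_i^* < c\}$. The bound $V \leq c|B| + 2c(n-|B|) = 2nc - c|B|$ then forces $|B| \leq n\delta/(10\alpha)$, and the distribution expressing $\vec{p}^*$ as a convex combination of admissible-set indicators realizes case~2 with $I = [n]\setminus B$, contradicting our hypothesis. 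So $V < 2\bigl(1-\delta/(20\alpha)\bigr)nc$.

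A standard computation shows the dual LP minimizes $2c(n-\sum z_i)+\mu$ over $\vec{z}\in[0,1]^n$ and $\mu\geq0$ subject to the ``fractional admissible-independence'' constraint $\sum_{i\in H} z_i \leq \mu$ for every $\mathcal{V}$-admissible $H$. Strong duality then yields a feasible dual pair $(\vec{z},\mu)$ with $\sum_i z_i > \mu/(2c) + n\delta/(20\alpha)$.

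I now extract the sublist for case~1 from $(\vec{z},\mu)$. Let $A_t = \{i : z_i \geq t\}$ and set $t^* = 2k\mu/(\beta d)$; since $\mu/(2c) = nt^*$, the layer-cake identity gives $\int_{t^*}^1 |A_t|\,dt \geq \sum_i z_i - nt^* > n\delta/(20\alpha)$, so some $t \geq t^*$ satisfies $|A_t| \geq n\delta/(20\alpha)$. On $A := A_t$, every admissible $H \subseteq A$ obeys $t|H|\leq\sum_{i\in H}z_i\leq\mu$, hence $|H|\leq\mu/t\leq\beta d/(2k)$. Fix a maximal admissible $H^*\subseteq A$ and set $W = \sum_{i\in H^*} V_i$, of dimension at most $k|H^*|\leq \beta d/2$. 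For $i\in H^*$ pick any nonzero $\vec{z}_i\in V_i\subseteq W$; for $i\in A\setminus H^*$, maximality of $H^*$ together with admissibility of $H^*$ forces $V_i\cap W\neq\{\vec{0}\}$, so pick a nonzero $\vec{z}_i$ in that intersection. All $\vec{z}_i$'s lie in $W$ and $|A|\geq\delta n/(20\alpha)$, so case~1 holds.

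The main obstacle is calibrating the LP's constants: the gap between case~2's threshold $c$ and the LP-objective cap $2c$ must be chosen so that both the contrapositive in the first paragraph and the averaging cutoff $t^* = 2k\mu/(\beta d)$ in the third produce the required constants $|I|\geq(1-\delta/(10\alpha))n$ and $|A|\geq\delta n/(20\alpha)$ simultaneously. Once the LP is correctly arranged and the subset $A$ is in hand, the final step producing nonzero $\vec{z}_i$'s in the low-dimensional $W$ is a short geometric argument using only the maximality of $H^*$ in $A$.
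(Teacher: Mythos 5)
Your proof is correct, but it takes a genuinely different route from the paper's. The paper fixes one specific distribution $\mathcal{D}$ up front --- the random greedy process that repeatedly adds a uniformly random $V_{i_0}$ meeting the current span trivially --- and shows that if this particular $\mathcal{D}$ fails to satisfy case~2, then a conditioning argument (bounding $\Pr[E_1 \mid E_1 \cup E_2]$, in the style of Claim~6.4 of \cite{DSW14}) forces each low-probability index to intersect the span of the first $t = \lceil \beta d/(2k)\rceil$ sampled spaces with probability at least $1/2$, and an expectation argument finishes. You instead optimize over \emph{all} distributions via LP duality: either the optimum of $\sum_i \min(p_i,2c)$ is large, in which case the optimizer itself certifies case~2, or the dual certificate $(\vec{z},\mu)$ with $\sum_{i\in H} z_i\leq\mu$ for all admissible $H$ can be thresholded (layer cake at $t^*=2k\mu/(\beta d)$) to produce a large set $A$ on which every admissible subset has size at most $\beta d/(2k)$; taking a maximal admissible $H^*\subseteq A$ then gives the low-dimensional $W=\sum_{i\in H^*}V_i$ meeting every $V_i$, $i\in A$, nontrivially. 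I checked the duality computation, the constant bookkeeping, and the final geometric step, and they all go through (you should note the degenerate case $\mu=0$, which is ruled out since singletons are admissible, so $\mu=0$ would force $\vec{z}=\vec{0}$ and contradict $\sum_i z_i>n\delta/(20\alpha)$; and, as in the paper's own proof, one implicitly assumes the $V_i$ are nonzero so that nonzero $\vec{z}_i$ can be chosen, which holds in the application). What your approach buys is the avoidance of the paper's somewhat delicate probabilistic conditioning, replaced by a deterministic rounding of a fractional certificate, and it is in principle stronger in case~2 (it finds a good distribution whenever one exists, not merely tests the greedy one); what it costs is the machinery of an exponential-size LP and strong duality where the paper needs only an elementary sampling argument.
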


\begin{proof}
 We sample a $\mathcal{V}$-admissible set as follows: Initially let $F=\emptyset$. In each step we pick a space $V_{i_0}$ among $V_1,V_2,\ldots,V_n$ with $V_{i_0}\bigcap\sum_{i\in F}V_i=\{\vec{0}\}$, and add $i_0$ to $F$. If such a $V_{i_0}$ does not exist, the procedure terminates. Let $H$ be the final value of $F$. Clearly, $H$ is $\mathcal{V}$-admissible. Let $\mathcal{D}$ be the distribution of $H$. We will show that if there does not exist an $I\subseteq[n]$ such that $\mathcal{D}$ and $I$ satisfy the second case, the first case must hold.

In the above random procedure, if it is possible that $\dim(\sum_{i\in H}V_i)\leq\beta d$, then there are nonzero vectors $\vec{z}_1\in V_1$, $\vec{z}_2\in V_2$, \ldots, $\vec{z}_n\in V_n$ contained in $\sum_{i\in H}V_i$, which has dimension at most $\beta d$. Since $\delta/\alpha\leq3/2$ by Lemma~\ref{lem:awd}, we have $n\geq\delta n/(20\alpha)$ and the lemma is proved. In the remaining proof we assume that $H$ always satisfies
$\dim\left(\sum_{i\in H}V_i\right)>\beta d.$
This implies that there are always at least $\beta d/k$ elements in $H$. Fix $t=\lceil \beta d/(2k)\rceil<|H|$ (recall our assumption $d>400\alpha k^3/(\beta\delta)$), and let $V_{j_1},V_{j_2},\ldots,V_{j_t}$ be the first $t$ spaces.

We assume that the second case of the lemma does not hold, i.e. there is a set $X$ of at least $\delta n/(10\alpha)$ $i$'s with $\Pr[i\in H]\leq \beta d/(4kn)\leq t/(2n)$. We will show the first case holds.

\begin{claim}
For every $i\in X$, $\Pr\big[V_i\cap(V_{j_1}+V_{j_2}+\cdots+V_{j_t})\neq\{\vec{0}\}\big]\geq\frac{1}{2}$.
\end{claim}

\begin{proof}
The proof is similar to Claim~6.4 in~\cite{DSW14}. There are 3 disjoint events
\begin{eqnarray*}
E_1 & : &  V_i\cap(V_{j_1}+V_{j_2}+\cdots+V_{j_t})=\{\vec{0}\}, \\
E_2 & : &  i\in\{j_1,j_2,\ldots,j_t\},\text{ i.e. $i$ is picked in the first $t$ steps}, \\
E_3 & : &  i\notin\{j_1,j_2,\ldots,j_t\}\text{ and }V_i\cap(V_{j_1}+V_{j_2}+\cdots+V_{j_t})\neq\{\vec{0}\}.
\end{eqnarray*}
As long as $i$ is not picked, the $s$-th ($s\in[t]$) element $j_s$, conditioned on $ E_1\cup E_2$, is sampled uniformly at random from $([n]\setminus J_s)\cup\{i\}$, where $J_s=\{j\in[n]:V_j\cap(V_i+V_{j_1}+V_{j_2}+\cdots+V_{j_{s-1}})\neq\{\vec{0}\}$. Therefore the probability that $V_i$ is not picked in the first $t$ steps conditioning on $E_1\cup E_2$ is
$$\Pr[E_1\mid E_1\cup E_2]\leq(1-\frac{1}{n})(1-\frac{1}{n-1})\cdots(1-\frac{1}{n-t+1})=\frac{n-t}{n}.$$
Hence $t/(2n)\geq\Pr[i\in H]\geq\Pr[E_2]\geq(t/n)\Pr[E_1\cup E_2]$. It follows that
$$\Pr\Big[V_i\cap(V_{j_1}+V_{j_2}+\cdots+V_{j_t})\neq\{\vec{0}\}\Big]\geq\Pr[E_3]=1-\Pr[E_1+E_2]\geq1-\frac{1}{2}=\frac{1}{2}.$$
Thus the claim is proved.
\end{proof}

Therefore the expected number of $i$'s in $X$ with $V_i\cap(V_{j_1}+V_{j_2}+\cdots+V_{j_t})\neq\{\vec{0}\}$ is at least $|X|/2\geq\delta n/(20\alpha)$. Each of these $V_i$'s has a nonzero vector contained in $V_{j_1}+V_{j_2}+\cdots+V_{j_t}$. The first case is proved by $\dim(V_{j_1}+V_{j_2}+\cdots+V_{j_t})\leq kt\leq\beta d$.
\end{proof}

To prove Theorem~\ref{thm:main_}, we only need to consider the second case in Lemma~\ref{lem:bigprob}. Let $p_i$ ($i\in[n]$) be the probability that $i$ is contained in $H\sim\mathcal{D}$, and $I\subseteq[n]$ be the set such that $|I|\geq(1-\delta/(10\alpha))n$ and $p_i\geq \beta d/(4kn)$ for every $i\in I$. We use $k_1,k_2,\ldots,k_n$ to denote the dimensions of $V_1,V_2,\ldots,V_n$.

\begin{lemma}
The vector $\vec{p}=(p_1,p_2,\ldots,p_n)$ is in the convex hull of $\mathcal{V}$-admissible vectors.
\end{lemma}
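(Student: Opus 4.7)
The plan is to observe that this lemma is essentially a direct unpacking of definitions, once one recognizes that the distribution $\mathcal{D}$ built in the proof of Lemma~\ref{lem:bigprob} is by construction supported entirely on $\mathcal{V}$-admissible sets. The random process (greedily add $V_{i_0}$ whenever its intersection with the current span is $\{\vec{0}\}$) terminates with a set $H$ satisfying $\dim(\sum_{i\in H} V_i) = \sum_{i\in H} \dim(V_i)$, which is exactly the defining property of a $\mathcal{V}$-admissible set in Definition~\ref{def:adsv}.

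Concretely, I would write
\[
\vec{p} = \sum_{H} \Pr_{\mathcal{D}}[H]\cdot \vec{1}_H,
\]
where the sum ranges over all $\mathcal{V}$-admissible sets $H$. Indeed, the $i$-th coordinate of the right-hand side is
\[
\sum_{H: i \in H} \Pr_{\mathcal{D}}[H] = \Pr_{H\sim \mathcal{D}}[i \in H] = p_i,
\]
by the definition of $p_i$ set up right before the lemma statement. Since $\Pr_{\mathcal{D}}[H] \geq 0$ and $\sum_H \Pr_{\mathcal{D}}[H] = 1$ (as $\mathcal{D}$ is a probability distribution supported on admissible sets), this exhibits $\vec{p}$ as a convex combination of $\mathcal{V}$-admissible vectors $\vec{1}_H$.

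There is no real obstacle here; the only thing worth double-checking is that the sampling procedure always outputs an admissible set, which follows because at each step we insist that $V_{i_0} \cap \sum_{i\in F} V_i = \{\vec{0}\}$, so the final $H$ has the property that every $V_i$ ($i \in H$) has trivial intersection with the span of the others, which is precisely the condition $\dim(\sum_{i\in H} V_i) = \sum_{i \in H} \dim(V_i)$ required by Definition~\ref{def:adsv}.
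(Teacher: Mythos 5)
Your proof is correct and takes essentially the same route as the paper: both express $\vec{p}$ as $\sum_H \Pr_{\mathcal{D}}[H]\,\vec{1}_H$ over $\mathcal{V}$-admissible sets $H$, using that $\mathcal{D}$ is a probability distribution supported on admissible sets and that the $i$-th coordinate is $\Pr_{H\sim\mathcal{D}}[i\in H]=p_i$. Your extra verification that the greedy sampling procedure always outputs an admissible set is a sensible sanity check but was already established when $\mathcal{D}$ was defined in Lemma~\ref{lem:bigprob}.
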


\begin{proof}
For every $\mathcal{V}$-admissible set $H$, we use $q_H$ to denote the probability that $H$ is picked according to $\mathcal{D}$, and $\vec{1}_H$ to denote the $\mathcal{V}$-admissible vector corresponding to $H$. Then,
$$\vec{p}=(p_1,p_2,\ldots,p_n)=\sum_{\mathcal{V}\text{-admissible }H}q_H\vec{1}_H$$
and
$p_i$ is exactly the probability that $i \in H$.
\end{proof}

We apply Theorem~\ref{thm:barthec} with the $\vec{p}=(p_1,p_2,\ldots,p_n)$, and obtain an invertible linear map $M:\R^\ell\mapsto\R^\ell$ such that for any unit vector $\vec{w}\in\R^\ell$,
$$\sum_{i=1}^np_i\|\Proj_{V_i'}(\vec{w})\|^2\leq2,$$
where $V_i'$ denotes $M(V_i)$. Since $p_i\geq \beta d/(4kn)$ for every $i\in I$, we have
\begin{equation} \label{eqn:bartheapp}
\sum_{i\in I}\|\Proj_{V_i'}(\vec{w})\|^2\leq\frac{8kn}{\beta d}.
\end{equation}

We will reduce the problem to the special case discussed in the previous subsection. We say a pair $\{i_1,i_2\}\subseteq[n]$ is {\em bad} if $V_{i_1}',V_{i_2}'$ are not $0.5$-separated. Let $\mathcal{S}=(S_1,S_2,\ldots,S_w)$ be the $(\alpha,\delta)$-system of $\mathcal{V}$. By Lemma~\ref{lem:linearmap}, $\mathcal{S}$ is also an $(\alpha,\delta)$-system of $\mathcal{V}'=(V_1',V_2',\ldots,V_n')$. We estimate the number of sets among $S_1,S_2,\ldots,S_w$ containing a bad pair.

\begin{lemma}
For every $i_0\in I$, there at most $\delta n/(10\alpha)$ values of $i\in I$ such that $V_{i_0}'$ and $V_i'$ are not $0.5$-separated.
\end{lemma}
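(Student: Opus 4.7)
The plan is to combine the spectral inequality (\ref{eqn:bartheapp}) with Lemma~\ref{lem:spacetobasis}, using pigeonhole on an orthonormal basis of $V_{i_0}'$ to convert ``not $0.5$-separated'' into a lower bound on a single projection length.

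First, fix $i_0 \in I$ and let $N$ denote the number of indices $i \in I$ for which $V_{i_0}'$ and $V_i'$ are not $0.5$-separated. Fix an orthonormal basis $\{\vec{u}_1,\dots,\vec{u}_{k_{i_0}}\}$ of $V_{i_0}'$. By Lemma~\ref{lem:spacetobasis} applied with $\tau=0.5$, $V=V_{i_0}'$ and $V'=V_i'$, for each such $i$ there exists some index $j(i) \in [k_{i_0}]$ such that
$$\|\Proj_{V_i'}(\vec{u}_{j(i)})\|^2 \geq \frac{(1-0.5)^2}{k_{i_0}} = \frac{1}{4k_{i_0}}.$$

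Next, by pigeonhole on $j(i) \in [k_{i_0}]$, some index $j^* \in [k_{i_0}]$ is chosen by at least $N/k_{i_0}$ of these $i$'s. Apply the key inequality (\ref{eqn:bartheapp}), obtained from Theorem~\ref{thm:barthec}, to the unit vector $\vec{w} = \vec{u}_{j^*}$:
$$\frac{N}{k_{i_0}} \cdot \frac{1}{4k_{i_0}} \;\leq\; \sum_{i \in I} \|\Proj_{V_i'}(\vec{u}_{j^*})\|^2 \;\leq\; \frac{8kn}{\beta d}.$$

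Rearranging gives $N \leq 32 k_{i_0}^2 \cdot kn/(\beta d) \leq 32 k^3 n/(\beta d)$, using $k_{i_0} \leq k$. Finally invoke the running assumption (already made in this subsection) that $d > 400 \alpha k^3/(\beta\delta)$, so that
$$N \leq \frac{32 k^3 n}{\beta d} < \frac{32}{400}\cdot\frac{\delta n}{\alpha} < \frac{\delta n}{10\alpha},$$
which is the claimed bound. No serious obstacle is expected; the only point that needs care is making sure the constants match up so that the hypothesis $d > 400\alpha k^3/(\beta\delta)$ is exactly strong enough to close the inequality $32 k^3 n/(\beta d) \leq \delta n/(10\alpha)$.
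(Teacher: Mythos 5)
Your proof is correct and takes essentially the same approach as the paper: both combine Lemma~\ref{lem:spacetobasis} (to extract a basis vector $\vec{u}_j$ with large projection onto $V_i'$) with inequality~(\ref{eqn:bartheapp}) and the assumption $d>400\alpha k^3/(\beta\delta)$. The only cosmetic difference is that you use pigeonhole to select a single $j^*$ and apply~(\ref{eqn:bartheapp}) once, whereas the paper applies it to each $\vec{u}_{j_0}$ and sums over $j_0$; both yield the same bound $32k^3n/(\beta d)$.
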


\begin{proof}
Let $\{\vec{u}_1,\vec{u}_2,\ldots,\vec{u}_{k_{i_0}}\}$ be an orthonormal basis of $V_{i_0}'$. For any $i$ that $V_{i_0}'$ and $V_i'$ are not $0.5$-separated, by Lemma~\ref{lem:spacetobasis}, there must be $j\in[k_{i_0}]$ such that
$$\|\Proj_{V_i'}(\vec{u}_j)\|^2\geq\frac{1}{4k_{i_0}}\geq\frac{1}{4k}.$$
For every $j_0\in[k_{i_0}]$, we set $\vec{w}=\vec{u}_{j_0}$ in inequality~(\ref{eqn:bartheapp}). The number of $i$'s that $\|\Proj_{V_i'}(\vec{u}_{j_0})\|\geq1/(4k)$ is at most
$$\frac{8kn}{\beta d}\left/\frac{1}{4k}\right.=\frac{32k^2n}{\beta d}.$$
Since there are $k_{i_0}\leq k$ values of $j_0\in[k_{i_0}]$, the number of $i$'s that $V_{i_0}'$ and $V_i'$ are not $0.5$-separated is at most
$$k\cdot\frac{32k^2n}{\beta d}\leq\frac{32k^3n}{\beta d}\leq\frac{\delta n}{10\alpha}.$$
In the last inequality we used the assumption $d>400\alpha k^3/(\beta \delta)$.
\end{proof}

The number of bad pairs is at most
$$|[n]\setminus I|\cdot n+|I|\cdot\frac{\delta n}{10\alpha}\leq\frac{\delta n^2}{10\alpha}+\frac{\delta n^2}{10\alpha}=\frac{\delta n^2}{5\alpha}.$$
We remove all $S_j$'s that contains a bad pair and use $\mathcal{S}'$ to denote the list of the remaining sets. Since each pair appears at most $\alpha$ times, we have removed at most $\delta n^2/5$ sets. Originally we have at least $\delta n^2/3$ sets by Lemma~\ref{lem:awd}. Now we have at least $\delta n^2/3-\delta n^2/5\geq\delta n^2/10$ sets. By Lemma~\ref{lem:removebad}, there is a sublist $\mathcal{V}''=(V_{i_1}',V_{i_2}',\ldots,V_{i_q}')$ ($q\geq\delta n/(20\alpha)$) of $\mathcal{V}'$ and a sublist $\mathcal{S}''$ of $\mathcal{S}'$ such that $\mathcal{S}''$ is an $(\alpha,\delta/20)$-system of $\mathcal{V}''$.

Since we have removed all bad pairs, $\mathcal{V}''$ and $\mathcal{S}''$ must satisfy the conditions of Theorem~\ref{thm:sep}. By Theorem~\ref{thm:sep},
$$\dim(V_{i_1}'+V_{i_2}'+\cdots+V_{i_q}')\leq\frac{\alpha k}{0.5\cdot\delta/20}=\frac{40\alpha k}{\delta}\leq\beta d.$$
In the last inequality we used the assumption $d>400\alpha k^3/(\beta \delta)$. Recall that the linear map $M$ is invertible. So the space $V_{i_1}+V_{i_2}+\cdots+V_{i_q}$ has the same dimension as $V_{i_1}'+V_{i_2}'+\cdots+V_{i_q}'$. Therefore there are $q\geq\delta n/(20\alpha)$ spaces $V_{i_1},V_{i_2},\ldots,V_{i_q}$ within dimension $\beta d$. The second case of Theorem~\ref{thm:main_} holds. In summary, under the assumption $d>400\alpha k^3/(\beta\delta)$ we have shown the second case of Theorem~\ref{thm:main_} is always satisfied. Therefore Theorem~\ref{thm:main_} is proved. \hfill$\Box$

\bibliographystyle{alpha}
\bibliography{hidimSG}

\end{document}